\newtheorem{theorem}{Theorem}[section]
\newtheorem{lemma}[theorem]{Lemma}
\newtheorem{corollary}[theorem]{Corollary}
\newtheorem{proposition}[theorem]{Proposition}
\newtheorem{definition}[theorem]{Definition}
\newtheorem{construction}[theorem]{Construction}
\newtheorem{remark}[theorem]{Remark}
\newtheorem{example}[theorem]{Example}
\newtheorem{notation}[theorem]{Notation}
\numberwithin{equation}{section}
\def\BLUE{blue} 
\def\RED{red} 
\def\hua{\mathcal}
\def\<{\langle}
\def\>{\rangle}
\newcommand{\add}{\mathsf{add}\hspace{.01in}}%
\newcommand{\rep}{\mathsf{rep}\hspace{.01in}}%
\renewcommand{\mod}{\mathsf{mod}\hspace{.01in}}%
\newcommand{\Int}{\operatorname{Int}\nolimits}%
\newcommand{\Hom}{\operatorname{Hom}\nolimits}%
\newcommand{\End}{\operatorname{End}\nolimits}%
\newcommand{\Ext}{\operatorname{Ext}\nolimits}%
\newcommand{\im}{\operatorname{Im}\nolimits}%
\def\numbers{\begin{enumerate}[label=\arabic*{$^\circ$}.]}
\def\ends{\end{enumerate}}
\newcommand{\R}{\operatorname{\mathbf{R}}}
\newcommand{\CEG}{\operatorname{CEG}}             
\newcommand{\D}{\operatorname{\hua{D}}}
\newcommand{\per}{\operatorname{per}}
\def\arrow{red}
\newcommand{\EGT}{\operatorname{EG}^{\bowtie}}
\renewcommand\bowtie{\times}
\def\T{\mathbf{T}}
\def\CS{\mathbf{C}(\surf)}
\def\TA{{\mathbf{A}^\bowtie}(\surf)}
\def\TC{{\mathbf{C}^\bowtie}(\surf)}
\def\obj{\hua{I}^\bowtie(\surf)}
\def\QP{\Gamma(Q,W)}
\def\M{\mathbf{M}}
\def\P{\mathbf{P}}
\renewcommand{\k}{\mathbf{k}}
\def\m{\mathfrak{m}}
\def\n{\mathfrak{n}}
\def\r{\mathfrak{r}}
\def\C{\mathcal{C}}
\def\surf{\mathbf{S}} 
\newcommand\Sp{\operatorname{Sp}}
\newcommand\so{\operatorname{\mathfrak{S}}}
\newcommand{\yd}{\operatorname{[-]}}
\newcommand{\bt}{\operatorname{[\text{\tiny{+}}]}}
\tikzset{->-/.style={decoration={  markings,  mark=at position #1 with
    {\arrow{>}}},postaction={decorate}}}
\tikzset{-<-/.style={decoration={  markings,  mark=at position #1 with
    {\arrow{<}}},postaction={decorate}}}
\def\X{\mathfrak{X}}
\def\bX{\overline{\X}}
\def\tX{\widetilde{\X}}
\begin{document}

\title{Cluster categories for marked surfaces: punctured case}
\author{Yu Qiu}
\address{YQ:
Department of Mathematics,
Chinese University of Hong Kong,
Shatin,
N.T.,
Hong Kong}
\email{yu.qiu@bath.edu}
\author{Yu Zhou}
\address{YZ:
Department of Mathematical Sciences,
Norwegian University of Science and Technology,
N-7491,
Trondheim,
Norway}
\email{yuzh@math.ntnu.no}
\dedicatory{Dedicated to Idun Reiten on the occasion of her seventy-fifth birthday}
\subjclass[2010]{Primary: 16G20, 18E30, 16E30, 57M50; Secondary: 16G70.}
\keywords{Cluster categories, intersection numbers, cluster exchange graphs, skewed-gentle algebras.}
\thanks{This work is supported by Research Council of Norway, FRINAT grant number 231000.}


\begin{abstract}
    We study cluster categories arising from marked surfaces (with punctures and non-empty boundaries).
    By constructing skewed-gentle algebras,
    we show that there is a bijection between tagged curves and string objects. Applications include interpreting dimensions of $\Ext^1$ as intersection numbers of tagged curves and
    Auslander-Reiten translation as tagged rotation.
    An important consequence is that the cluster(-tilting) exchange graphs of such cluster categories are connected.
\end{abstract}

\maketitle

\section{Introduction}
\subsection{Overall}
Cluster algebras were introduced by Fomin-Zelevinsky \cite{FZ} around 2000,
with quiver mutation as the combinatorial aspect.
Derksen-Weyman-Zelevinsky(=DWZ) \cite{DWZ} further developed quiver mutation to mutation of quivers with potential. During the last decade,
the cluster phenomenon was spotted in various areas in mathematics, as well as in physics,
including geometric topology and representation theory.
On one hand, the geometric aspect of cluster theory was explored by
Fomin-Shapiro-Thurston (=FST) \cite{FST} after the pioneering work of Fock-Goncharov \cite{FG1,FG2}.
They constructed a quiver $Q_\T$ (and later Labardini-Fragoso \cite{LF,ILF} gave a corresponding potential $W_\T$)
from any (tagged) triangulation $\T$ of a marked surface $\surf$.
Moreover, they showed that mutation of quivers (with potential)
is compatible with flip of triangulations. There are a lot of known results about cluster algebras in the surface case:
\begin{itemize}
  \item Felikson-Shapiro-Tumarkin \cite{FST2} classified cluster algebras of finite mutation type, that they are all from marked surfaces expect for few cases.
  \item Musiker-Schiffler-Williams \cite{MSW} constructed two canonical bases by two types of collections of curves.
  \item Musiker-Schiffler-Williams \cite{MSW2}, Musiker-Williams \cite{MW} and Canakci-Schiffler \cite{CS1,CS2,CS3} gave combinatorial formulas for cluster variables and relations.
  \item Mills \cite{M} showed that there exist maximal green sequences for quivers with potential associated to triangulated marked surfaces except for once-punctured closed surfaces (cf. \cite{ACCERV}).
\end{itemize}

On the other hand, the categorification of cluster algebras leads to representations of quivers,
due to Buan-Marsh-Reineke-Reiten-Todorov \cite{BMRRT}.
Later, Amiot \cite{A} introduced generalized cluster categories via Ginzburg dg algebras associated to quivers with potential.
Then there is an associated cluster category $\C(\T)$ for each triangulation $\T$ of $\surf$.

Several works have been done concerning the cluster categories associated to
triangulations of surfaces.
Namely, for some special cases,
\begin{itemize}
  \item Caldero-Chapoton-Schiffler \cite{CCS} realized the cluster category of type $A_n$ by a regular polygon with $n+3$ vertices (i.e. a disk with $n+3$ vertices on its boundary).
  \item Schiffler \cite{Sch} realized the cluster category of type $D_n$ by a regular polygon with $n$ vertices and one puncture in the center (i. e. a disk with $n$ vertices on its boundary and one puncture in its interior).
\end{itemize}
In the unpunctured case,
\begin{itemize}
  \item Assem-Br\"{u}stle-Charbonneau Jodoin-Plamondon \cite{ABCP} proved that the Jacobian algebra of such a quiver with potential is a gentle algebra and gave a bijection between arcs that are not in the triangulation and string modules of the associated gentle algebras.
  \item Br\"{u}stle-Zhang(=BZ) \cite{BZ} generalised the bijection of \cite{ABCP} to a bijection between the set of curves and valued closed curves and the set of indecomposable objects in the associated cluster category. Under this bijection, they described irreducible morphisms, the Auslander-Reiten(=AR) translation and AR-triangles in the cluster category by geometric terms in the surface. They also gave a bijection between triangulations of the surface and cluster tilting objects in the cluster category such that flip of an arc is compatible with mutation.
  \item Based on Br\"{u}stle-Zhang's work, Zhang-Zhu-Zhou \cite{ZhZZ} proved that the intersection number of two curves is equal to the dimension of $\Ext^1$ of the corresponding objects and gave a geometric model of torsion pairs and their mutations.
  \item Canakci-Schroll \cite{CS} described a basis for $\Ext^1$ in the cluster category and also computed a basis for $\Ext^1$ in the module category of the corresponding Jacobian algebra by distinguishing different types of crossings between curves.
  \item Marsh-Palu \cite{MP} showed that Calabi-Yau reduction (introduced in \cite{IY}) can be interpreted as cutting along curves without self-intersections in the surface.
  \item The authors \cite{QQ, QZ2} also investigated other categories, which are used to define cluster categories, see the formula \eqref{eq:Amiot}, and obtained similar structures/formulae.
\end{itemize}
For general cases,
\begin{itemize}
\item For each ideal triangulation without self-folded triangles,
    Labardini-Fragoso \cite{LF2} associated a representation of the quiver with potential $(Q_\T,W_\T)$ to each curve without self-intersections and proved that mutation of representations is compatible with flip of triangulations.
\item Br\"{u}stle-Qiu \cite{BQ} made an effort to understand a basic functor in the cluster category,
i.e. the shift (or the AR-translation in this case),
in terms of an element, the tagged rotation, in the tagged mapping class group
of the marked surface.
Their motivation lies on the study of the Seidel-Thomas braid group.
\end{itemize}
Notice that most works above only deal with the unpunctured case.
This is because: i) the usual flip does not work for self-folded triangles
(cf. Figure~\ref{fig:self-folded}) and ii) the associated quivers with potential are much more complicated in the punctured case and their Jacobian algebras are not gentle (cf. \cite{ILF}). FST solved i) by introducing the notion of tagging.

In this paper, we aim to study the cluster categories associated to marked surfaces with punctures and with non-empty boundaries.
The main tool is skewed-gentle algebras (a special kind of clannish algebras),
which were developed in \cite{B,CB,De,G,GP}.
The essential results are summarized as follows.

\begin{theorem}[(Theorem~\ref{thm:bi}, Theorem~\ref{thm:ind}, Theorem~\ref{thm:T-rotation}, Theorem~\ref{thm:Int} and Theorem~\ref{thm:conn})]\label{thm:0}
Let $\surf$ be a marked surface with punctures and with non-empty boundary.
Given an admissible triangulation $\T$ of $\surf$ (see Definition~\ref{def:admissible}),
let $\C(\T)$ be the associated cluster category.
Then there is a bijection
\[\begin{array}{rccc}
X^\T\colon&\TC&\to&\so(\T)\\
&(\gamma,\kappa)&\mapsto&X^\T_{(\gamma,\kappa)}
\end{array}\]
from the set $\TC$ of tagged curves in the surface $\surf$
to the set $\so(\T)$ of string objects in the category $\C(\T)$ (see Definition~\ref{def:curve} and Definition~\ref{def:stringob}),
satisfying the following.
\begin{enumerate}
\item For every admissible triangulation $\T'$ of $\surf$,
there is an equivalence $\Theta\colon\C(\T)\simeq\C(\T')$, such that
$\Theta\circ X^\T=X^{\T'}.$
\item For any tagged curve $(\gamma,\kappa)\in\TC$, we have
$X^\T_{\rho(\gamma,\kappa)}\cong X^\T_{(\gamma,\kappa)}[1],$
where $\rho$ is the tagged rotation (see Definition~\ref{def:rotation}).
\item For any two tagged curves $(\gamma_1,\kappa_1)$, $(\gamma_2,\kappa_2)$
(not necessarily distinct), we have
\[\Int\left((\gamma_1,\kappa_1),(\gamma_2,\kappa_2)\right)=\dim_\k\Ext_{\C(\T)}^1(X^\T_{(\gamma_1,\kappa_1)},X^\T_{(\gamma_2,\kappa_2)})\]
where $\Int$ denotes
the intersection number (see Definition~\ref{def:Int});
\item The exchange graph $\CEG(\C(\T))$ of cluster tilting objects in $\C(\T)$ is isomorphic to the exchange graph
$\EGT(\surf)$ of tagged triangulations of $\surf$ and hence it is connected.
\end{enumerate}
\end{theorem}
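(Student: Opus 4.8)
The plan is to build the bijection $X^\T$ by working through the skewed-gentle algebra attached to the admissible triangulation $\T$, and then to verify the four properties in turn, bootstrapping each from the previous ones together with the known compatibility between flips of tagged triangulations and mutations of quivers with potential (FST--Labardini-Fragoso). First I would set up the combinatorial dictionary: given an admissible triangulation $\T$, the Jacobian algebra $J(Q_\T,W_\T)$ should be identified with a skewed-gentle algebra $A_\T$, with the punctures contributing the ``special'' loops. The indecomposable objects of $\C(\T)$ that are not cluster-tilting summands of $\T$ correspond to string and band modules over $A_\T$; the relevant ones here are the \emph{string} objects $\so(\T)$. On the geometric side, one reads off from a tagged curve $(\gamma,\kappa)$ its sequence of crossings with the arcs of $\T$, which yields a walk in $Q_\T$ and hence a string; tagging at a puncture toggles which of the two special-loop-eigenvalue choices is made. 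This correspondence, shown to be well defined and bijective, is what Theorem~\ref{thm:bi} provides, so for part (4) I may take $X^\T$ as given.

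For part (4) itself, the strategy is to show that $X^\T$ restricts to a bijection between tagged triangulations of $\surf$ (i.e.\ the vertices of $\EGT(\surf)$) and cluster-tilting objects of $\C(\T)$ (the vertices of $\CEG(\C(\T))$), and that this vertex bijection sends edges to edges in both directions, i.e.\ is a graph isomorphism. The vertex statement should follow by combining Theorem~\ref{thm:ind} (which identifies indecomposable rigid objects, in particular the reachable cluster-tilting summands, with tagged curves that can appear in a tagged triangulation) with the intersection-number formula of part (3): a collection of tagged curves forms a tagged triangulation precisely when they are pairwise compatible, i.e.\ $\Int=0$ pairwise and no self-intersections, which by part (3) translates to $\Ext^1$ vanishing pairwise and self-rigidity — exactly the condition to be (a basic version of) a cluster-tilting object, given that the number of curves in a tagged triangulation equals the rank. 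For edges: an edge of $\EGT(\surf)$ is a flip of a single tagged arc, an edge of $\CEG(\C(\T))$ is a single mutation (exchange of one indecomposable summand); by the FST/Labardini-Fragoso compatibility and the functoriality in part (1) (the equivalence $\Theta$ intertwining the $X$'s for $\T$ and $\T'$), a flip is carried to the corresponding mutation and conversely, so the bijection is an isomorphism of graphs. Connectedness of $\CEG(\C(\T))$ is then immediate from connectedness of $\EGT(\surf)$, which is a theorem of FST for surfaces with non-empty boundary (the tagged flip graph is connected precisely in the cases excluded being certain closed surfaces, e.g.\ the once- and twice-punctured spheres without boundary).

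I expect the main obstacle to be the edge-to-edge matching at the level of tagged arcs incident to punctures, where the tagging bookkeeping is most delicate: a ``flip'' in $\EGT(\surf)$ at an arc ending at a puncture may change the tagging data, and on the algebraic side the mutation may replace a string object by one built from the opposite special-loop eigenvalue, so one has to check that $X^\T$ and $X^{\T'}$ (related by $\Theta$) really do assign the matching string objects on both sides of such a flip. Concretely, the care needed is in the case of self-folded triangles in the underlying ideal triangulation: the admissibility hypothesis on $\T$ is designed to sidestep the failure of the naive flip (cf.\ Figure~\ref{fig:self-folded}), but one must still confirm that every tagged triangulation is reachable by admissible flips from $\T$ so that the graph isomorphism is surjective onto all of $\CEG(\C(\T))$ — this is where one invokes the connectedness of $\EGT(\surf)$ a second time, now to transport the equivalences $\Theta$ around the graph. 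Once these points are settled, parts (1)--(3) feed in mechanically and (4) follows.
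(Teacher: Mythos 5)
Your overall route for part (4) is the same as the paper's: use the bijection of Theorem~\ref{thm:bi} together with the intersection-number formula of part (3) to match tagged triangulations with cluster tilting objects, then import connectedness from Fomin--Shapiro--Thurston. However, there is one genuine gap in the vertex bijection. The codomain of $X^\T$ is only the set of \emph{string} objects, whereas a cluster tilting object of $\C(\T)$ could a priori contain an indecomposable summand that is not a string object (a band-type object, or a string object $M^\T(\m,N)$ with $\dim_\k N>1$). You wave at this by saying the band modules are ``not relevant,'' but without ruling them out the map from tagged triangulations to cluster tilting objects need not be surjective, and the graph isomorphism fails. The paper closes this precisely in the proof of Corollary~\ref{cor:bi}: by Remark~\ref{rem:De}, every indecomposable $\Lambda^\T$-module $M$ outside $\mathfrak{M}^\T$ sits strictly above the bottom of a homogeneous or rank-$2$ tube, so $\Hom_{\Lambda^\T}(M,\tau M)\neq 0$, and then Palu's formula $\dim\Ext^1_{\C(\T)}(X,X)=\dim\Hom(F_\T X,\tau F_\T X)+\dim\Hom(F_\T X,\tau F_\T X)$ (\cite[Lemma 3.3]{Pa}) shows $M$ is not rigid in $\C(\T)$. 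Hence all rigid indecomposables are string objects, and only then does part (3) give the bijection on vertices. You need this lemma (or an equivalent) explicitly.

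Two smaller remarks. First, you cite Theorem~\ref{thm:ind} as ``identifying indecomposable rigid objects with tagged curves''; that theorem is actually the change-of-triangulation equivalence $\Theta$, and it plays no role in part (4) beyond what is already in parts (1)--(3). Second, your anticipated difficulty with edge-to-edge matching at punctures is not where the work lies: once the vertex bijection is induced by a bijection on the underlying indecomposables/arcs, an edge in either graph is by definition a pair of vertices differing in exactly one element, so edges correspond to edges formally; no transport of the equivalences $\Theta$ around the graph is needed. The delicate tagging bookkeeping you describe is genuinely present in the paper, but it is spent in the proofs of Theorems~\ref{thm:ind} and~\ref{thm:T-rotation} (the case-by-case mutation tables and the $\lozenge$-flips), not in deducing part (4) from them.
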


\subsection{Context}
In Section~\ref{app:clan} we recall notions and notations about skewed-gentle algebras
that we will use throughout the paper.
In Section~\ref{sec:2}, we recall the background of cluster categories associated to triangulated marked surfaces.
In Section~\ref{sec:3}, we study the skewed-gentle algebra associated to an admissible triangulation and give a correspondence between tagged curves and string objects. The relation between such correspondences from different admissible triangulations is also studied.
In Section~\ref{sec:5}, we give homological interpretations of geometric objects from marked surfaces,
namely, tagged rotation, intersection numbers and exchange graph of tagged triangulations.
An example is presented in Section~\ref{sec:ex}, to demonstrate some of the notions/results in the paper.
The technical proof of the main theorem, Theorem~\ref{thm:Int}, is given in Section~\ref{sec:proof}.
In Appendix~~\ref{app:ex} we discuss some properties of admissible triangulations and in Appendix~~\ref{app:DWZ} we recall DWZ mutation of decorated representations.

\subsection{Conventions}
Throughout this article, $\k$ denotes an algebraically closed field.
For any $\k$-algebra $A$, an $A$-module means a finitely generated left $A$-module and we denote by $\mod A$ the category of all $A$-modules.
For a finite set $I$, we denote by $|I|$ the number of elements in $I$.
For an object $X$ in a triangulated category $\C$, we denote
\begin{itemize}
\item by $\add X$ the full subcategory of $\C$ consisting of direct summands of direct sums of copies of $X$;
\item by $X^\bot$ the full subcategory of $\C$ consisting of objects $Y$ with $\Hom_\C(X,Y)=0$;
\item by $\C/(X)$ the additive quotient category of $\C$ by $\add X$.
\end{itemize}

\section{Preliminaries on skewed-gentle algebras}\label{app:clan}
We recall from \cite{B,CB,De,G,GP} some notions, notations and results about skewed-gentle algebras used in this paper.

\subsection{Skewed-gentle algebras}\label{subsec:clannish}

A \emph{biquiver} is a tuple $(Q_0,Q_1,Q_2)$,
where $Q_0$ is the set of vertices, $Q_1$ is the set of solid arrows and
$Q_2$ is the set of dashed arrows. Let $s,t:Q_1\cup Q_2\rightarrow Q_0$ be
the start/terminal functions of arrows. We call an arrow $\alpha$ in $Q_1\cup Q_2$ a loop if $s(\alpha)=t(\alpha)$.

In this paper, we always assume that a biquiver $Q=(Q_0,Q_1,Q_2)$ satisfies
\begin{itemize}
\item each arrow in $Q_2$ is a dashed loop;
\item there is at most one loop in $Q_2$ at each vertex;
\item there is no loop in $Q_1$.
\end{itemize}
Let $Q_0^{Sp}$ be the subset of $Q_0$ consisting of vertices where there is a dashed loop in $Q_2$.

Skewed-gentle algebras, modeled on gentle algebras, were introduced in \cite{GP} as a certain class of clannish algebras defined in \cite{CB}.

\begin{definition}\label{def:gentle}
A pair $(Q,Z)$ of a biquiver $Q$ and a set $Z$ of compositions $ab$ of arrows $a$, $b$ in $Q_1$ is called skewed-gentle if the following conditions hold.
\begin{itemize}
\item For each vertex $p\in Q_0^{Sp}$, there is at most one arrow $\alpha\in Q_1$ ending at $p$ and at most one arrow $\beta\in Q_1$ starting at $p$, with $\beta\alpha\in Z$ (if both exist and similar below).
\item For each vertex $p\notin Q_0^{Sp}$, there are at most two arrows $\alpha_1,\alpha_2\in Q_1$ ending at $p$ and at most two arrows $\beta_1,\beta_2\in Q_1$ starting at $p$, and they can be labeled in a way that $\beta_1\alpha_1\in Z$, $\beta_2\alpha_2\in Z$, $\beta_1\alpha_2\notin Z$ and $\beta_2\alpha_1\notin Z$.
\end{itemize}
An algebra $\Lambda$ is called a skewed-gentle algebra if $\Lambda$ is Morita equivalent to $\k Q/(R)$ for a skewed-gentle pair $(Q,Z)$, where $R=Z\cup\{\varepsilon^2-\varepsilon\mid\varepsilon\in Q_2\}$.
\end{definition}

\begin{example}\label{exm:clan}
Let $Q$ be the following biquiver
\[
\xymatrix{
&2\ar[dr]^b\\
1\ar@{-->}@(lu,ld)[]_{\varepsilon_1}\ar[ur]^a&&3\ar[ll]^c\ar[rr]^d&&4\ar@{-->}@(ru,rd)[]^{\varepsilon_4}
}\]
with $Z=\{ba,cb,ac\}$. Then $(Q,Z)$ is a skewed-gentle pair and hence $\k Q/(R)$ is a skewed-gentle algebra, where $R=Z\cup\{\varepsilon_1^2-\varepsilon_1,\varepsilon_4^2-\varepsilon_4\}$.
\end{example}

\subsection{Letters}\label{subsec:letters}

Let $(Q,Z)$ be a skewed-gentle pair. Following \cite{CB,G}, we associate a new biquiver $\widehat{Q}=(\widehat{Q}_0,\widehat{Q}_1,\widehat{Q}_2)$ to $Q=(Q_0,Q_1,Q_2)$ by adding two new vertices $i_+$ and $i_-$ and two new solid arrows
$a_{i_\pm}:i\rightarrow i_\pm$ for each vertex $i\in Q_0$. That is,
\begin{itemize}
\item $\widehat{Q}_0=Q_0\cup\{i_\pm\mid i\in Q_0\}$;
\item $\widehat{Q}_1=Q_1\cup\{a_{i_\pm}:i\rightarrow i_\pm\mid i\in Q_0\}$;
\item $\widehat{Q}_2=Q_2$.
\end{itemize}
For example, the biquiver $\widehat{Q}$ associated to the biquiver $Q$ in Example~\ref{exm:clan} is the following.
\[
\xymatrixrowsep{.5cm}
\xymatrixcolsep{1.6cm}
\xymatrix{
&2_+ \ar@{<-}[d]^{a_{2_+}}&\\
&2\ar[ddr]^b\\
1_+&2_- \ar@{<-}[u]_{a_{2_-}}&3_+ \ar@{<-}[d]^{a_{3_+}}&&4_+\\
1\ar@{-->}@(lu,ld)[]_{\varepsilon_1}\ar[uur]^a\ar[u]^{a_{1_+}}&&3\ar[ll]^c\ar[rr]^d&&4\ar@{-->}@(ru,rd)[]^{\varepsilon_4}\ar[u]^{a_{4_+}}\\
1_- \ar@{<-}[u]^{a_{1_-}}&&3_- \ar@{<-}[u]_{a_{3_-}}&&4_- \ar@{<-}[u]^{a_{4_-}}\\
}\]

For any arrow $\alpha$ in $\widehat{Q}$, we define a \emph{direct letter} $\alpha$ and an \emph{inverse letter} $\alpha^{-1}$, which are mutually inverse. Let $L$ be the set of all letters. The functions $s,t$ can be extended to $L$ by setting
$s(\alpha^{-1})=t(\alpha)\quad\text{and}\quad t(\alpha^{-1})=s(\alpha)$.
For each $i\in Q_0\subset \widehat{Q}_0$, let $L(i):=\{l\in L\mid s(l)=i\}$ . We divide $L(i)$ into two disjoint subsets $L_+(i)$ and $L_-(i)$ with linear orders such that the subset $L_\theta(i)$
has one of the following forms:
\begin{itemize}
  \item $\{a_{i_\theta}\}$,
  \item $\{a_{i_\theta}>\alpha\}$,
  \item $\{\beta^{-1}>a_{i_\theta}\}$,
  \item $\{\beta^{-1}>a_{i_\theta}>\alpha\}$,
  \item $\{\varepsilon^{-1}>a_{i_\theta}>\varepsilon\}$,
\end{itemize}
for $\theta\in\{\pm\}$, some solid arrows $\alpha$ and $\beta$ in $Q_1\subset \widehat{Q}_1$ and some dashed arrow $\varepsilon$ in $Q_2=\widehat{Q}_2$, satisfying that
\begin{itemize}
\item for any two solid arrows $\gamma$ and $\delta$ in $Q_1\subset \widehat{Q}_1$ with $t(\delta)=s(\gamma)=k$,
    we have that $\gamma\delta\in Z$ if and only if $\gamma$ and $\delta^{-1}$ are both in $L_+(k)$ or $L_-(k)$.
\end{itemize}
Observe that in the set $L_\theta(i)$, the inverse of an arrow in $Q$ is always greater than the arrow $a_{i_\theta}$, and an arrow in $Q$ is always smaller than the arrow $a_{i_\theta}$. Notice that if $L(i)\neq \{a_{i_+},a_{i_-}\}$, then there are exactly two possible choices for the pair $(L_+(i), L_-(i))$.

\begin{example}\label{exm:disjoint}
In Example~\ref{exm:clan}, one possible choice of the subsets $L_\theta(i)$ is
\begin{itemize}
  \item $L_+(1)=\{c^{-1}>a_{1_+}>a\}$ and $L_-(1)=\{\varepsilon_1^{-1}>a_{1_-}>\varepsilon_1\}$;
  \item $L_+(2)=\{a^{-1}>a_{2_+}>b\}$ and $L_-(2)=\{a_{2_-}\}$;
  \item $L_+(3)=\{b^{-1}>a_{3_+}>c\}$ and $L_-(3)=\{a_{3_-}>d\}$;
  \item $L_+(4)=\{d^{-1}>a_{4_+}\}$ and $L_-(4)=\{\varepsilon_4^{-1}>a_{4_-}>\varepsilon_4\}$.
\end{itemize}
Note that the inverse letters $a_{i_\theta}^{-1}$ are not listed here because they are not in any set $L_\theta(i)$.
However they can still appear as first letters in words (see below).
\end{example}

\subsection{Words}\label{subsec:words}

A \emph{word} $\m$ is a sequence $\omega_m\cdots\omega_2\omega_1$ of letters in $L$ satisfying that for any $1\leq j\leq m-1$, $\omega_{j}^{-1}\in L_\theta(i)$ and $\omega_{j+1}\in L_{\theta'}(i)$ for different $\theta,\theta'\in\{\pm\}$ and some $i\in Q_0\subset \widehat{Q}_0$. We call $\omega_1$ the first letter of $\m$ and $\omega_m$ the last letter of $\m$. Since both $L_\theta(i)$ and $L_{\theta'}(i)$ are subsets of $L(i)$, we have $t(\omega_j)=s(\omega_{j+1})=i$. The functions $s,t$ can be generalized to the set of words by $s(\m):=s(\omega_1)$ and $t(\m):=t(\omega_m)$.  The \emph{inverse} of a word $\m=\omega_m\cdots\omega_2\omega_1$ is defined as $\m^{-1}:=\omega_1^{-1}\omega_2^{-1}\cdots\omega_m^{-1}$. The product $\n\m$ of two words $\m=\omega_m\cdots\omega_2\omega_1$ and $\n=\omega_{m+r}\cdots\omega_{m+2}\omega_{m+1}$ is defined to be $\omega_{m+r}\cdots\omega_{m+2}\omega_{m+1}\omega_m\cdots\omega_2\omega_1$ if this is again a word.

A letter is called \emph{punctured} if it is of the from $a_{i_\theta}$ or $a_{i_\theta}^{-1}$ such that $L_{\theta}(i)=\{\varepsilon^{-1}>a_{i_\theta}>\varepsilon\}$ for some dashed arrow $\varepsilon$. A word $\m$ is called \emph{left inextensible} (resp. \emph{right inextensible}) if there is no letter $l$ such that $l\m$ (resp. $\m l$) is again a word. A word is called \emph{maximal} if it is both left and right inextensible. It is obvious that a word $\m=\omega_m\cdots\omega_1$ is right (resp. left) inextensible if and only if $\omega_1$ (resp. $\omega_m$) is of the form $a_{i_\theta}^{-1}$ (resp. $a_{i_\theta}$). In a word, except for its first letter (resp. last letter), there are no letters of the form $a_{i_\theta}^{-1}$ (resp. $a_{i_\theta}$). In particular, each word contains at most two punctured letters.

\begin{example}
In Example~\ref{exm:clan} with the disjoint subsets given in Example~\ref{exm:disjoint}, the punctured letters are $a_{1_-}$, $a_{1_-}^{-1}$, $a_{4_-}$ and $a_{4_-}^{-1}$. The sequence $\m=a_{3_+}d^{-1}\varepsilon_4^{-1}dc^{-1}$ is a word with $s(\m)=1$ and $t(\m)=3_+$, which is left inextensible but not right inextensible.
\end{example}

\subsection{Orders}\label{subsec:order}

The linear orders in the sets $L_\pm(i), i\in Q_0\subset \widehat{Q}_0$, induce a partial order $\geq$ on the set of words,
that $\m>\r$ if and only if $\m=\omega_m\cdots\omega_2\omega_1$ and $\r=\nu_r\cdots\nu_2\nu_1$
satisfy $\omega_j\cdots\omega_1=\nu_j\cdots\nu_1$ and $\omega_{j+1}>\nu_{j+1}$ for some $j\geq0$.

For each vertex $i\in Q_0$, let $W_\pm(i)$ be the set of left inextensible words whose first letter is in $L_\pm(i)$.
By construction, $W_\pm(i)$ are linearly ordered. For each word $\m$ in $W_\theta(i)$, we use $\bt\m$ to denote its successor (if exists) and use $\yd\m$ to denote its predecessor (if exists). So we have $\yd\m>\m>\bt\m$. In case $\m$ is right inextensible, we set $\m\bt:=(\bt\m^{-1})^{-1}$ and $\m\yd:=(\yd\m^{-1})^{-1}$.

Let $\m$ be a maximal word. By \cite{G}, $\bt(\m\bt)=(\bt\m)\bt$ provided both of them exist; denote by $\bt\m\bt$ one (or both) of them.

\begin{example}
In Example~\ref{exm:clan} with the disjoint subsets given in Example~\ref{exm:disjoint}, any word in the set $W_+(2)$ starts with one of the letters $b$, $a^{-1}$ and $a_{2_+}$. A word in $W_+(2)$ starting with $a_{2_+}$ has to be $a_{2_+}$ since it is already left inextensible. Moreover, any word in $W_+(2)$ that starts with $a^{-1}$ is bigger than $a_{2_+}$ and any word that starts with $b$ is less than $a_{2_+}$. Furthermore, we have $\bt a_{2_+}=a_{3_-}b$ and $\yd a_{2_+}=a_{2_-}a\varepsilon_1a^{-1}$.
\end{example}

\subsection{Admissible words}\label{subsec:strings}
For technical reasons, we introduce a special letter ${\varepsilon}^\ast$ for each dashed loop $\varepsilon$ and a map $F$ on letters which sends the elements in $\{{\varepsilon}^{-1}>{a_{i_\theta}}>{\varepsilon}\}$ to ${\varepsilon}^\ast$ and preserves the other letters.

A maximal word $\m=\omega_m\cdots\omega_1$ is called \emph{admissible} if the following conditions hold.
\begin{itemize}
  \item[(A1)] For each $\omega_i={\varepsilon}$ with $\varepsilon$ a dashed loop, we have that ${\omega}_1^{-1}\cdots{\omega}_{i-1}^{-1}>\omega_m\cdots\omega_{i+1}$, and for each $\omega_i={\varepsilon}^{-1}$ with $\varepsilon$ a dashed loop, we have that ${\omega}_1^{-1}\cdots{\omega}_{i-1}^{-1}<\omega_m\cdots\omega_{i+1}$.
  \item[(A2)] If $\m$ contains two punctured letters then $F(\m)$ is not a proper power of $F(\m')$ for any maximal word $\m'$ containing two punctured letters, where \[F(\m):=F(\omega_m)\cdots F(\omega_1)F(\omega_2^{-1})\cdots F(\omega_{m-1}^{-1}).\]
\end{itemize}
Let $\X$ be the set of admissible words. Note that if $\m$ is in $\X$ then so is its inverse $\m^{-1}$. Let $\bX$ be the set of all equivalence classes in $\X$ with respective to $\m\simeq\m^{-1}$. But when we say an element $\m$ in $\bX$, we always mean that $\m$ is a representative in an equivalence class.

\begin{example}\label{exm:admissible}
In Example~\ref{exm:clan} with the disjoint subsets given in Example~\ref{exm:disjoint}, consider the word $\m=a_{3_-}c^{-1}\varepsilon_1^{-1}cd^{-1}a_{4_-}^{-1}.$ It is clear that $\m$ is maximal. Since $a_{4_-}dc^{-1}<a_{3_-}c^{-1}$, $\m$ satisfies (A1). Moreover, $\m$ contains only one punctured letter $a_{4_-}^{-1}$, so (A2) holds automatically. Hence $\m$ is admissible, i.e. $\m\in\X.$
\end{example}

\subsection{Known results}\label{subsec:results}

In this subsection, we collect some results on indecomposable modules of skewed-gentle algebras and homomorphism spaces between them.

Let $\m\in\bX$. Associate an indeterminate to each punctured letter in $\m$ and let $A_{\m}$ be the $\k$-algebra generated by these indeterminates $x$ with relations $x^2=x$. A 1-dimensional module $N$ of $A_{\m}$ is an algebra homomorphism $N:A_\m\to\k$. It is determined (up to isomorphism) by the values $N(x)\in\{0,1\}$. More precisely,
\begin{itemize}
\item if $\m$ contains no punctured letters, then $A_{\m}=\k$ and there is one 1-dimensional module $N=\k$;
\item if $\m$ contains one punctured letter, then $A_{\m}=\k[x]/(x^2-x)$ and there are two 1-dimensional modules: $N=\k_a$ with $\k_a(x)=a$, for $a\in\{0,1\}$.
    Moreover, we have
    \begin{gather}\label{eq:xy}
        \dim_\k\Hom_{A_{\m}}(\k_u,\k_v)=\delta_{u,v}, \quad \forall u,v\in\{0,1\}.
    \end{gather}
\item if $\m$ contains two punctured letters, then $A_{\m}=\k\langle x,y\rangle/(x^2-x,y^2-y)$ and there are four 1-dimensional modules: $N=\k_{a,b}$ with $\k_{a,b}(x)=a$ and $\k_{a,b}(y)=b$, for $a,b\in\{0,1\}$.
    Moreover, we have
    \begin{gather}\label{eq:xy2}
        \dim_\k\Hom_{A_{\m}}\left(\k_{u,u'},\k_{v,v'}\right)=\delta_{u,v}\delta_{u',v'},
            \quad \forall u,v,u',v'\in\{0,1\}.
    \end{gather}
\end{itemize}

\begin{construction}\label{cstr:rep}
For each pair $(\m,N)$ with $\m=\omega_m\cdots\omega_1\in\bX$ and a 1-dimensional $A_\m$-module $N$, the associated representation $M=M(\m,N)$ of $Q$ bounded by $R$ is constructed as follows.
\begin{itemize}
  \item For each vertex $i\in Q_0$, let $I_i=\{1\leq j\leq m-1\mid t(\omega_j)=i\}$.
  \item Let $M_i$ be a vector space of dimension $|I_i|$, say with base vectors $z_j$, $j\in I_i$.
  \item If $\omega_{j+1}=\alpha$ an arrow in $Q_1$, define $M_\alpha(z_j)=z_{j+1}$, if $\omega_{j+1}=\alpha^{-1}$, with $\alpha$ an arrow in $Q_1$, define $M_\alpha(z_{j+1})=z_{j}$.
  \item If $\omega_{j+1}=\varepsilon$ an arrow in $Q_2$, define $M_\varepsilon(z_j)=M_\varepsilon(z_{j+1})=z_{j+1}$, if $\omega_{j+1}=\varepsilon^{-1}$, with $\varepsilon$ an arrow in $Q_2$, define $M_\varepsilon(z_{j+1})=M_\varepsilon(z_j)=z_j$.
  \item If $\omega_{1}$ (resp. $\omega_m$) is punctured with indeterminate $x$, define $M_{\varepsilon_{t(\omega_1)}}(z_1)=N(x)z_1$ (resp. $M_{\varepsilon_{s(\omega_m)}}(z_{m-1})=N(x)z_{m-1}$), where $\varepsilon_i$ denotes the dashed loop at $i$.
  \item All other components of $M_\beta$ are zero, for any $\beta\in Q_1\cup Q_2$.
\end{itemize}
\end{construction}

\begin{example}
Let $\m=a_{3_-}c^{-1}\varepsilon_1^{-1}cd^{-1}a_{4_-}^{-1}$ be the admissible word in Example~\ref{exm:admissible}. Since $\m$ contains one punctured letter $a_{4_-}^{-1}$, we have $A_{\m}=\k[x]/(x^2-x)$ and there are two different 1-dimensional $A_\m$-modules $\k_u$ for $u=0,1$. By Construction~\ref{cstr:rep}, the associated representations $M(\m,\k_u)$ are
\[
\xymatrix{
&0\ar[dr]^0\\
\k^2\ar@{-->}@(lu,ld)[]_{\left(\begin{smallmatrix}1&1\\0&0\end{smallmatrix}\right)}\ar[ur]^0
&&\k^2\ar[ll]^{\left(\begin{smallmatrix}1&0\\0&1\end{smallmatrix}\right)}\ar[rr]^{\left(\begin{smallmatrix}1&&0\end{smallmatrix}\right)}
&&\k\ar@{-->}@(ru,rd)[]^{u}
}\]
\end{example}

\begin{theorem}\label{thm:Deng}\cite{B,CB,De}
Let $\Lambda=\k Q/(R)$ be a skewed-gentle algebra. Then $(\m,N)\mapsto M(\m,N)$ is an injective map from the set of pairs $(\m,N)$, with $\m\in\bar{\X}$ and $N$ a 1-dimensional $A_{\m}$-module (up to isomorphism), to the set of indecomposable representations $M(\m,N)$ (up to isomorphism) of $Q$ bounded by $R$.
\end{theorem}

\begin{remark}\label{rem:De}
In fact, Bondarenko \cite{B}, Crawley-Boevey \cite{CB} and Deng \cite{De} proved the result above for general clannish algebras, where the map can be upgraded to a bijection by enlarging the set $\bar{\X}$ and taking $N$ to be an arbitrary indecomposable $A_{\m}$-module. Furthermore, any indecomposable module $M$, which is not in the image of the injective map in the above theorem, is in a homogeneous tube or in a tube of rank 2 and does not sit in the bottom of the tube. So in particular $\Hom(M,\tau M)\neq 0$ for such an indecomposable module $M$.
\end{remark}

The Auslander-Reiten translation $\tau$ can be interpreted by the order of words.

\begin{theorem}[\cite{G}]\label{thm:G}
For any $\m=\omega_m\cdots\omega_1\in\bX$ and any 1-dimensional $A_{\m}$-module $N$, if $M(\m,N)$ is not projective, then
\[\tau M(\m,N)=\begin{cases}M(\bt\m\bt,\k)& \text{if $\m$ contains no punctured letters and $N=\k$,}\\
M(\bt\m,\k_{1-a})&\text{if only $\omega_1$ is punctured and $N=\k_a$,}\\
M(\m,\k_{1-a,1-b})&\text{if both $\omega_1$ and $\omega_m$ are punctured and $N=\k_{a,b}$.}\end{cases}\]
\end{theorem}

For technical reasons, we also consider a trivial word $1_i$ corresponding to each vertex $i\in Q_0\subset\widehat{Q}_0$. Let $\m=\omega_m\cdots\omega_1$ be a word in $\bX$. For any integers $i,j$ with $0\leq i<j\leq m+1$, we consider the subword $\m_{(i,j)}$ of $\m$ between $i$ and $j$ defined as
\[\m_{(i,j)}=\begin{cases}\omega_{j-1}\cdots\omega_{i+1}&\text{if $i<j-1$,}\\1_{t(\omega_i)}&\text{if $i=j-1$,}\end{cases}\]
where $1_{t(\omega_0)}:=1_{s(\omega_1)}$.

Let $\m=\omega_m\cdots\omega_1$ and $\r=\nu_r\cdots\nu_1$ be two words in $\bar{\X}$. A pair $\left((i,j),(h,l)\right)$ of pairs of integers $i,j,h,l$ with $0\leq i<j\leq m+1$ and $0\leq h<l\leq r+1$ is called an \emph{int-pair} from $\m$ to $\r$ if one of the following conditions holds:
\begin{itemize}
  \item $\m_{(i,j)}=\r_{(h,l)}$, $\omega_{i}^{-1}<\nu_{h}^{-1}$ and $\omega_{j}<\nu_{l}$,
  \item $\m_{(i,j)}=\left(\r_{(h,l)}\right)^{-1}$, $\omega_{i}^{-1}<\nu_{l}$ and $\omega_{j}<\nu_{h}^{-1}$,
\end{itemize}
where if an inequality contains at least one of $\omega_0$, $\omega_{m+1}$, $\nu_0$ and $\nu_{r+1}$ then we assume that it holds automatically. Let $H^{\m,\r}$ be the set of int-pairs from $\m$ and $\r$.

\begin{example}\label{exm:int}
Let $\m=a_{4_+}a_{4-}^{-1}$ and $\r=a_{3_-}c^{-1}\varepsilon_1^{-1}cd^{-1}a_{4_-}^{-1}$ be two admissible words for the skewed-gentle pair in Example~\ref{exm:clan} with the disjoint subsets given in Example~\ref{exm:disjoint}. Then $H^{\m,\r}$ contains only one element $((0,2),(0,2))$ for which, $\m_{(0,2)}=a_{4-}^{-1}=\r_{(0,2)}$ with $\omega_0^{-1}<\nu_0^{-1}$ and $\omega_2=a_{4_+}<d^{-1}=\nu_2$.
\end{example}

\begin{notation}\label{notation}
Let $(\m,N_1)$ and $(\r,N_2)$ be two pairs, where $\m,\r\in\bX$ and $N_1$ (resp. $N_2$) is a 1-dimensional module of $A_{\m}$ (resp. $A_{\r}$). For each int-pair $J=\left((i,j),(h,l)\right)$ in $H^{\m,\r}$, denote by $A_J$ the $\k$-algebra generated by the indeterminates associated to punctured letters contained in $\m_{(i,j)}$ (or equivalently in $\r_{(k,l)}$). Then $A_J$ is a subalgebra of $A_{\m}$ and $A_\r$ and hence both $N_1$ and $N_2$ can be regarded as $A_J$-modules.
\end{notation}
\begin{theorem}[\cite{G}]\label{thm:G2}
Under Notation~\ref{notation}, we have
\[\dim_\k\Hom_{\Lambda}(M(\m,N_1),M(\r,N_2))=\sum_{J\in H^{\m,\r}}\dim_\k\Hom_{A_J}(N_1,N_2).\]
\end{theorem}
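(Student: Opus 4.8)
The plan is to establish this Hom-space formula for skew-gentle algebras --- it is the cited result \cite{G}, and the same argument applies to general clannish algebras \cite{CB} --- by producing an explicit $\k$-basis of $\Hom_\Lambda(M(\m,N_1),M(\r,N_2))$ that is naturally indexed by pairs $(J,f)$, where $J\in H^{\m,\r}$ is an int-pair and $f$ runs over a basis of $\Hom_{A_J}(N_1,N_2)$; summing these contributions over all $J$ then yields the claimed identity. The underlying technique is Crawley-Boevey's functorial-filtration analysis of modules over clannish algebras, carried out on the biquiver $\widehat{Q}$, together with the extra bookkeeping of the $1$-dimensional decorations $N_1,N_2$ that sit at the special (dashed-loop) vertices.

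The first and main step is to describe $\Hom_\Lambda(M(\m,N_1),M(\r,N_2))$ combinatorially. With the bases from Construction~\ref{cstr:rep} --- basis vectors $z_s$ of $M(\m,N_1)$ indexed by the interior positions of $\m$, and $w_t$ of $M(\r,N_2)$ indexed by those of $\r$ --- a $\Lambda$-homomorphism is the same datum as a matrix $(\phi_{ts})$ of scalars that commutes with the action of every arrow of $\widehat{Q}$ and is compatible with the relations $R$. Spelling out these constraints, exactly as in the special biserial case, shows that any indecomposable piece of such a homomorphism is supported on a maximal common subword, i.e.\ on an identification $\m_{(i,j)}=\r_{(h,l)}$ or $\m_{(i,j)}=(\r_{(h,l)})^{-1}$, and that the requirement that this piece not be killed by the relations lying immediately beyond the two endpoints of the subword is precisely the pair of strict inequalities defining an int-pair --- case~(i) for the cogredient overlap, case~(ii) for the contragredient one --- with the convention that inequalities involving the virtual endpoints $\omega_0,\omega_{m+1},\mu_0,\mu_{r+1}$ hold automatically. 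Here one uses that $\Lambda$ is special biserial away from the special vertices and that each dashed loop $\varepsilon$ is idempotent, so that no relation of length $\geq 2$ is created at a special vertex and the only genuinely new feature is the action of these idempotents.

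Having matched the indecomposable pieces with int-pairs, I would then fix an int-pair $J=\big((i,j),(h,l)\big)$ and analyse the remaining freedom. Along the common subword $\m_{(i,j)}=\r_{(h,l)}^{\pm1}$ the consecutive basis vectors are linked by transition maps that are either invertible or idempotent, so the value of $\phi$ on the whole overlap is determined by its value at a single position; the only constraint then comes from the special letters occurring inside the subword, where $\phi$ is forced to intertwine the two idempotent actions. This is exactly the statement that the restriction of $\phi$ to the overlap is a homomorphism of $A_J$-modules $N_1\to N_2$ (Notation~\ref{notation}); conversely, every element of $\Hom_{A_J}(N_1,N_2)$ extends --- via the transition maps, and by zero off the overlap --- to a $\Lambda$-homomorphism, and these two operations are mutually inverse. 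Summing over $J$ therefore gives
\[
\dim_\k\Hom_\Lambda(M(\m,N_1),M(\r,N_2))=\sum_{J\in H^{\m,\r}}\dim_\k\Hom_{A_J}(N_1,N_2),
\]
as required. I expect the main obstacle to lie in the first step: the rigorous verification that distinct int-pairs contribute linearly independent homomorphisms and that there are no further ones --- no band-type contributions occur here, since by Theorem~\ref{thm:Deng} both modules are $1$-dimensionally decorated string modules --- together with the careful treatment of overlaps abutting the virtual endpoints. This is where one genuinely needs the functorial-filtration machinery of \cite{B,CB,De}, now applied to the enlarged biquiver $\widehat{Q}$; the skew-gentle hypothesis ensures the local structure is tame enough for that machinery to apply.
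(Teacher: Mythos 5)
There is nothing in the paper to compare against here: Theorem~\ref{thm:G2} is not proved in the paper but quoted from Gei{\ss} \cite{G} and used as a black box (its only role is in the computation of Section~\ref{sec:proof}). Measured against the cited source, your outline does follow the route by which such results are actually established: homomorphisms between string (clan) representations are spanned by graph maps supported on common subwords $\m_{(i,j)}=\r_{(h,l)}^{\pm1}$, the non-vanishing conditions at the two ends of the overlap translate into exactly the order inequalities defining an int-pair (with the stated convention for the virtual letters $\omega_0,\omega_{m+1},\mu_0,\mu_{r+1}$), and along an overlap containing punctured letters the compatibility with the idempotent dashed loops is precisely the condition that the map be $A_J$-linear, which produces the factor $\dim_\k\Hom_{A_J}(N_1,N_2)$ of Notation~\ref{notation}. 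So the strategy is sound and consistent with \cite{G}.

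As a proof, however, what you have written is a plan rather than an argument. The two points you defer --- that graph maps attached to distinct int-pairs are linearly independent, and that they exhaust $\Hom_\Lambda(M(\m,N_1),M(\r,N_2))$, including the delicate one-sided overlaps abutting the virtual endpoints --- are the entire technical content of \cite{G} (building on \cite{B,CB,De}), so invoking ``the functorial-filtration machinery'' at exactly that point amounts to citing the theorem being proved; if the intention is to give an independent proof, this step must actually be carried out. One factual slip should also be corrected: it is not true that no length-two relation passes through a special vertex. By Definition~\ref{def:gentle}(1) a vertex of $Q_0^{\Sp}$ may support a relation $\beta\alpha\in Z$, and in the triangulation context it typically does (see case (2) in the proof of Proposition~\ref{prop:clan}). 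What is true, and what your argument really needs, is that the dashed loops occur in no relation other than $\varepsilon^2-\varepsilon$, so the genuinely new feature at special vertices is only the eigenspace splitting for the idempotent action; the relations $\beta\alpha$ through such vertices are already encoded in the order conditions on letters and hence in the definition of int-pairs.
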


\section{Background on cluster categories for marked surfaces}\label{sec:2}
\subsection{Jacobian algebras and Ginzburg dg algebras}\label{subsec:dg}
Let $Q$ be a finite quiver and $W$ a potential on $Q$, that is, a sum of cycles in $Q$. The \emph{Jacobian algebra} of the quiver with potential $(Q, W)$ is the quotient
\[
    \hua{P}(Q,W):=\widehat{\k Q}/\overline{\partial W},
\]
where $\widehat{\k Q}$ is the complete path algebra of $Q$,
$\partial W=\<\partial_a W: {a \in Q_1}\>$ and
$\overline{\partial W}$ is the closure of $\partial W$ in $\widehat{\k Q}$ (cf. \cite{DWZ}).

The Jacobian algebra is the $0^{th}$ cohomology of
its refinement, the \emph{Ginzburg dg algebra} $\Gamma=\QP$ of $(Q,W)$
(see the construction in \cite[Section~7.2]{K10}).
There are three categories associated to $\Gamma$, namely,
\begin{itemize}
\item
the \emph{finite dimensional derived category} $\D_{fd}(\Gamma)$ of $\Gamma$,
which is a 3-Calabi-Yau category;
\item
the \emph{perfect derived category} $\per(\Gamma)$ of $\Gamma$,
which contains $\D_{fd}(\Gamma)$;
\item
the \emph{cluster category} $\hua{C}(\Gamma)$ of $\Gamma$, which is
the (triangulated) 2-Calabi-Yau quotient category
\begin{gather}\label{eq:Amiot}
\hua{C}(\Gamma):=\per(\Gamma)/\hua{D}_{fd}(\Gamma).
\end{gather}
\end{itemize}
Furthermore there is a canonical cluster tilting object $T_\Gamma$ in $\C(\Gamma)$ induced by
the silting object $\Gamma$ in $\per\Gamma$ such that
\begin{equation}\label{eq:eq}
\hua{C}(\Gamma)/(T_\Gamma)\simeq\mod\End_{\hua{C}(\Gamma)}(T_\Gamma)^{\text{op}}\cong\mod \hua{P}(Q,W).
\end{equation}
See \cite[Theorem 3.5]{A} and \cite[\S 2.1, Proposition (c)]{KR}.

For a vertex $i$ of $Q$, let $\mu_i(Q,W)$ be the mutation of $(Q,W)$ at $i$ in the sense of \cite{DWZ}, see also Appendix~\ref{app:DWZ}. By \cite{KY}, there exists a canonical triangulated equivalence
\begin{equation}\label{eq:sim}
\widetilde{\mu_i}:\hua{C}(\Gamma(Q,W))\simeq\hua{C}(\Gamma(\mu_i(Q,W))).
\end{equation}

\subsection{Quivers with potential from marked surfaces}\label{sec:QPtosurf}
Throughout the article, $\surf$ denotes a \emph{marked surface} with non-empty boundary in the sense of \cite{FST},
that is, a compact connected oriented surface $\surf$
with a finite set $\M$ of marked points on its boundary $\partial \surf$
and a finite set $\P$ of punctures in its interior $\surf\setminus\partial\surf$ such that the following conditions hold:
\begin{itemize}
\item each connected component of $\partial \surf$ contains at least one marked point,
\item $\surf$ is not closed, i.e. $\partial \surf \neq \emptyset$,
\item the rank
\begin{equation}\label{eq:rank}
n=6g+3p+3b+m-6
\end{equation}
of the surface is positive, where $g$ is the genus of $\surf$,
$b$ the number of boundary components,
$m=|\M|$ the number of marked points and $p=|\P|$ the number of punctures.
\end{itemize}

\begin{definition}[Curves and tagged curves]\label{def:curve}
Let $\surf$ be a marked surface with non-empty boundary.
\begin{itemize}
\item
An (ordinary) \emph{curve} in $\surf$ is a continuous function $\gamma:[0,1]\rightarrow \surf$ satisfying
 \begin{itemize}
   \item both $\gamma(0)$ and $\gamma(1)$ are in $\M\cup\P$;
   \item for any $0<t<1$, $\gamma(t)$ is in $\surf\setminus\left(\partial \surf\cup\P\right)$;
   \item $\gamma$ is not null-homotopic or homotopic to a boundary segment.
 \end{itemize}
\item The inverse of a curve $\gamma$ is defined as $\gamma^{-1}(t):=\gamma(1-t)$ for $t\in[0,1]$.
\item For two curves $\gamma_1,\gamma_2$, $\gamma_1\sim\gamma_2$ means that $\gamma_1$ is homotopic to $\gamma_2$ relative to $\{0,1\}$ (i.e. fixing the endpoints). Define an equivalence relation $\simeq$ on the set of curves in $\surf$ that $\gamma_1\simeq\gamma_2$ if and only if either $\gamma_1\sim\gamma_2$ or $\gamma_1^{-1}\sim\gamma_2$. Denote by $\CS$ the set of equivalence classes of curves in $\surf$ w.r.t. $\simeq$.
\item Let $\gamma$ be a curve in $\CS$ such that at least one of its endpoints is a puncture. Then define its \emph{completion} $\bar{\gamma}$ as in Figure~\ref{fig:completion}.
\begin{figure}[htpb]\centering
\begin{tikzpicture}[xscale=1.5,yscale=.3,rotate=90]
\draw[ultra thick]plot [smooth,tension=1] coordinates {(170:4.5) (180:4) (190:4.5)};
\draw[\BLUE,thick](180:4)to(0:4);\draw[thick](0:4)node{$\bullet$}(180:4)node{$\bullet$}
(1,-1.5)node{$\Longrightarrow$};
\end{tikzpicture}\qquad
\begin{tikzpicture}[xscale=1.5,yscale=.3,rotate=90]
\draw[ultra thick]plot [smooth,tension=1] coordinates {(170:4.5) (180:4) (190:4.5)};
\draw[\BLUE,thick,->-=.5,>=stealth]plot [smooth,tension=1] coordinates {(-4,0) (2,.5) (5,0) (2,-.5) (-4,0)};
\draw[\BLUE,thick](5,0)to(5,0.001);
\draw[\BLUE,dotted,thick](180:4)to(0:3);
\draw[thick](0:3)node{$\bullet$}(180:4)node{$\bullet$};
\end{tikzpicture}

\qquad\begin{tikzpicture}[xscale=.7,yscale=.3]
\draw[\BLUE,thick](0,4)to(0,-4) (0,-4.5)node{ }(0,7)node{ };\draw(3,0)node{$\Longrightarrow$};
\draw[thick](0,4)node{$\bullet$}(0,-4)node{$\bullet$};
\end{tikzpicture}\qquad\qquad
\begin{tikzpicture}[xscale=.7,yscale=.3]
\draw[\BLUE,thick](0,0) ellipse (1 and 5);
\draw[\BLUE,dotted,thick](0,3)to(0,-3);
\draw[thick](0,3)node{$\bullet$}(0,-3)node{$\bullet$};
\end{tikzpicture}
\caption{The completions of curves}
\label{fig:completion}
\end{figure}
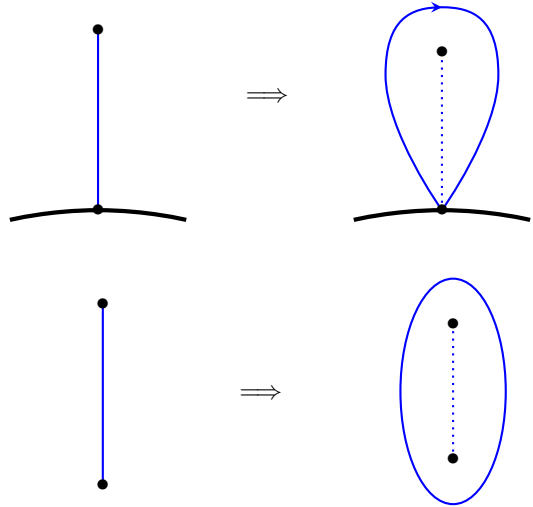
\item
A \emph{tagged curve} is a pair $(\gamma,\kappa)$, where $\gamma$ is a curve in $\surf$ and $\kappa:\{t\mid \gamma(t)\in \P\}\rightarrow\{0,1\}$ is a map, satisfying the following conditions:
\begin{enumerate}
  \item[(T1)] $\gamma$ does not cut out a once-punctured monogon by a self-intersection (including endpoints), cf. Figure~\ref{fig:monogon};
  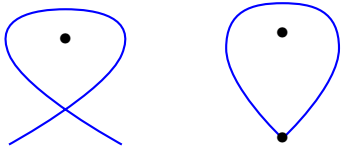
\begin{figure}[htpb]\centering
  	\begin{tikzpicture}[xscale=1.5,yscale=.2,rotate=90]
  	\draw[ultra thick,white]plot [smooth,tension=1] coordinates {(170:4.5) (180:4) (190:4.5)};
  	\draw[\BLUE,thick]plot [smooth,tension=1] coordinates {(-4,-.5) (2,.5) (5,0) (2,-.5) (-4,.5)};
  	\draw[thick](0:3)node{$\bullet$};
  	\end{tikzpicture}
  	\quad
  	\begin{tikzpicture}[xscale=1.5,yscale=.2,rotate=90]
  	\draw[ultra thick,white]plot [smooth,tension=1] coordinates {(170:4.5) (180:4) (190:4.5)};
  	\draw[\BLUE,thick]plot [smooth,tension=1] coordinates {(-4,0) (2,.5) (5,0) (2,-.5) (-4,0)};
  	\draw[thick](0:3)node{$\bullet$}(180:4)node{$\bullet$};
  	\end{tikzpicture}
  	\caption{Once-punctured monogons}
  	\label{fig:monogon}
  \end{figure}
  \item[(T2)] if $\gamma(0),\gamma(1)\in\P$, then the completion $\bar{\gamma}$ is not a proper power of a closed curve in the sense of the multiplication in the fundamental group of $\surf$.
\end{enumerate}
Note that $\kappa(t)\in\P$ implies $t\in\{0,1\}$.
Moreover, write $\kappa=\emptyset$ when $\{t\mid \gamma(t)\in \P\}=\emptyset$ by convention.
\item The inverse of a tagged curve $(\gamma,\kappa)$ is defined as $(\gamma,\kappa)^{-1}:=(\gamma^{-1},\kappa^{-1})$, where $\kappa^{-1}(t):=\kappa(1-t)$.
\item For two tagged curves $(\gamma_1,\kappa_1),(\gamma_2,\kappa_2)$, $(\gamma_1,\kappa_1)\sim(\gamma_2,\kappa_2)$ means that $\gamma_1\sim\gamma_2$ and $\kappa_1=\kappa_2$. Define an equivalence relation $\simeq$ on the set of tagged curves in $\surf$ that $(\gamma_1,\kappa_1)\simeq(\gamma_2,\kappa_2)$ if and only if either $(\gamma_1,\kappa_1)\sim(\gamma_2,\kappa_2)$ or $(\gamma_1,\kappa_1)\sim(\gamma_2,\kappa_2)^{-1}$. Denote by $\TC$ the set of equivalence classes of tagged curves in $\surf$ w.r.t. $\simeq$.
\end{itemize}
\end{definition}

\begin{definition}[Tagged rotation \cite{BQ}]\label{def:rotation}
The rotation $\rho(\gamma)$ of a curve $\gamma$ in $\CS$ is the curve
obtained from $\gamma$ by moving every endpoint of $\gamma$ that is in $\M$ along the boundary anticlockwise to the next marked point.
The tagged rotation $(\gamma',\kappa')=\rho(\gamma,\kappa)$ of
a tagged curve $(\gamma,\kappa)\in\TC$ consists of the curve $\gamma'=\rho(\gamma)$ and the map $\kappa'$ defined by $\kappa'(t)=1-\kappa(t)$ for $t$ with $\gamma(t)\in\P$, cf. Figure~\ref{fig:rotate}.

\end{definition}
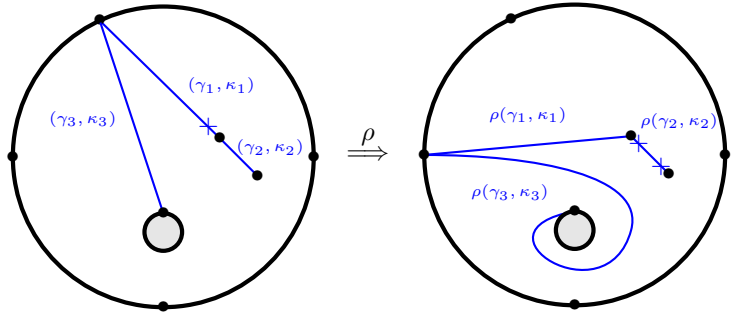
\begin{figure}[htpb]\centering
\begin{tikzpicture}[scale=.5]
\draw[thick](1.5,2.5-2)edge[\BLUE](115:4)edge[\BLUE](2.5,1.5-2);
\draw[thick] (115:4)node{$\bullet$};
\draw[\BLUE,thick] (115:4)to(0,-1.5);
\draw[\BLUE] (.4,3.5-1.6)node[right]{\tiny{$(\gamma_1,\kappa_1)$}} (2-.3,2.25-2)node[right]{\tiny{$(\gamma_2,\kappa_2)$}}
(-1,1)node[left]{\tiny{$(\gamma_3,\kappa_3)$}};
\draw[ultra thick](0,0) circle (4)(1.5,2.5-2)node{$\bullet$}(2.5,1.5-2) node{$\bullet$};
\draw[ultra thick,fill=gray!20](0,-2) circle (.5) (0,-1.5) node{$\bullet$}
    (1.5-.3,2.5-2+.3) node[\BLUE]{$+$};
  \foreach \j in {1,...,3}{\draw(90-90*\j:4) node{$\bullet$};}
\end{tikzpicture}
\begin{tikzpicture}[scale=.5]
\draw[\BLUE] (-1.25,.6)node[above]{\tiny{$\rho(\gamma_1,\kappa_1)$}}
(2-.5,2.4-1.5)node[right]{\tiny{$\rho(\gamma_2,\kappa_2)$}} (-3,-1)node[right]{\tiny{$\rho(\gamma_3,\kappa_3)$}};
\draw[thick](1.5,2.5-2)edge[\BLUE](-4,0)edge[\BLUE](2.5,1.5-2);
\draw[\BLUE,thick] (-4,0).. controls +(0:3) and +(75:1.5) ..(1.5,-2)
    .. controls +(-105:2.5) and +(190:3) ..(0,-1.5);
\draw[thick](1.7,2.3-2) node[\BLUE]{$+$}  (2.3,1.7-2) node[\BLUE]{$+$};;
\draw(-5.5,0)node{$\Longrightarrow$}node[above]{$\rho$};
\draw[thick](0,0) circle (4)(115:4)node{$\bullet$};
\draw[ultra thick](0,0) circle (4)(1.5,2.5-2)node{$\bullet$}(2.5,1.5-2) node{$\bullet$};
\draw[ultra thick,fill=gray!20](0,-2) circle (.5)(0,-1.5) node{$\bullet$};
  \foreach \j in {1,...,3}{\draw(90-90*\j:4) node{$\bullet$};}
\end{tikzpicture}
\caption{The tagged rotations of three tagged curves in $\surf$}
\label{fig:rotate}
\end{figure}

\def\t{\mathfrak{T}}
\def\p{\mathfrak{P}}

\begin{definition}[Intersection numbers]\label{def:Int}
For any two curves $\gamma_1,\gamma_2\in\CS$,
\begin{itemize}
\item let $\gamma_1\cap\gamma_2=\{(t_1,t_2)\mid\gamma_1(t_1)=\gamma_2(t_2)\notin\P\cup\M\}\subset(0,1)^2$
be the set of interior intersections between $\gamma_1$ and $\gamma_2$;
\item the intersection number between them is defined to be
\[\Int(\gamma_1,\gamma_2):=\min\{|\gamma_1'\cap\gamma_2'|\mid\text{$\gamma_1'\sim\gamma_1$, $\gamma_2'\sim\gamma_2$}\}.\]
\end{itemize}
For any two tagged curves $(\gamma_1,\kappa_1)$ and $(\gamma_2,\kappa_2)\in\TC$,
\begin{itemize}
\item let $\p(\gamma_1,\gamma_2)=\{(t_1,t_2)\mid\gamma_1(t_1)=\gamma_2(t_2)\in\P\}\subset\{0,1\}^2$
be the set of intersections between $\gamma_1$ and $\gamma_2$ at $\P$;
\item A pair $(t_1,t_2)$ in $\p(\gamma_1,\gamma_2)$ is called a tagged intersection between $(\gamma_1,\kappa_1)$ and $(\gamma_2,\kappa_2)$ if
    \begin{itemize}
      \item $\kappa_1(t_1)\neq\kappa_2(t_2)$, and
      \item when $\gamma_1|_{t_1\rightarrow (1-t_1)}\sim\gamma_2|_{t_2\rightarrow (1-t_2)}$, we have $\gamma_1(1-t_1)=\gamma_2(1-t_2)$ belongs to $\P$ and $\kappa_1(1-t_1)\neq\kappa_2(1-t_2)$, where $\gamma|_{0\to 1}=\gamma$ and $\gamma|_{1\to 0}=\gamma^{-1}$;
    \end{itemize}
    let $\t((\gamma_1,\kappa_1),(\gamma_2,\kappa_2))$ be the set of tagged intersections between $(\gamma_1,\kappa_1)$ and $(\gamma_2,\kappa_2)$;
\item the intersection number between them is defined to be
\[\Int\left((\gamma_1,\kappa_1),(\gamma_2,\kappa_2)\right):=\Int(\gamma_1,\gamma_2)+
|\t((\gamma_1,\kappa_1),(\gamma_2,\kappa_2))|.\]
\end{itemize}
\end{definition}

We explain the intersection number of two tagged curves in some special cases.

\begin{example}
 Let $(\gamma_1,\kappa_1)$ and $(\gamma_2,\kappa_2)$ be two tagged curves in $\TC$.
\begin{itemize}
  \item If all the endpoints of $\gamma_1$ and $\gamma_2$ are in $\M$, then \[\Int\left((\gamma_1,\kappa_1),(\gamma_2,\kappa_2)\right)=\Int(\gamma_1,\gamma_2).\]
  \item If $\gamma_1$ and $\gamma_2$ are not in the same equivalence class in $\CS$ (i.e. $\gamma_1\nsim\gamma_2$ and $\gamma_1\nsim\gamma_2^{-1})$, then \[\Int\left((\gamma_1,\kappa_1),(\gamma_2,\kappa_2)\right)=\Int(\gamma_1,\gamma_2)
  +|\left\{(t_1,t_2)\mid \gamma_1(t_1)=\gamma_2(t_2)\in\P,\ \kappa_1(t_1)\neq\kappa_2(t_2)\right\}|.\]
  \item If $\gamma_1\sim \gamma_2$ whose endpoints are two different punctures, then
  \[|\t((\gamma_1,\kappa_1),(\gamma_2,\kappa_2))|=\begin{cases}2&\text{if $\kappa_1(0)\neq\kappa_2(0)$ and $\kappa_1(1)\neq\kappa_2(1)$,}\\0&\text{otherwise.}\end{cases}\]
  \item If the two tagged curves are as in Figure~\ref{fig:exmint} where $\gamma_1\sim\gamma_2^{-1}$ and
  \[\kappa_a(t)=\begin{cases}1&\text{if $a=1$ and $t=0$,}\\0&\text{otherwise},\end{cases}\]
  then $(0,0)\in\t \left((\gamma_1,\kappa_1),(\gamma_2,\kappa_2)\right)$. This is because $\kappa_1(0)=1\neq \kappa_2(0)=0$ and $\gamma_1|_{0\to 1}\nsim \gamma_2|_{0\to 1}$. For the pair $(0,1)$, we also have $\kappa_1(0)=1\neq \kappa_2(1)=0$. But since $\gamma_1|_{0\to 1}\sim \gamma_2|_{1\to 0}$, we need to compare the values of $\kappa_1(1)$ and $\kappa_2(0)$. Because $\kappa_1(1)=0=\kappa_2(0)$, the pair $(0,1)\notin\t \left((\gamma_1,\kappa_1),(\gamma_2,\kappa_2)\right)$. It is easy to see that neither $(1,0)$ nor $(1,1)$ is in $\t \left((\gamma_1,\kappa_1),(\gamma_2,\kappa_2)\right)$. Hence in this case we have
  \[\Int((\gamma_1,\kappa_1),(\gamma_2,\kappa_2))=1.\]
  We also mention that
  \[\Int((\gamma_1,\kappa_1),(\gamma_1,\kappa_1))=2.\]
  Since $(\gamma_1,\kappa_1)$ has self-intersections, it is not a tagged arc in the sense of \cite{FST} but it is in $\TC$.
  \begin{figure}[htpb]\centering
  	\begin{tikzpicture}[rotate=45]
  	\draw[\BLUE,->-=.45,>=stealth, thick](0,0).. controls +(45:2) and +(90:1) .. (2.4,0)
  	.. controls +(-90:1) and +(-30:2) .. (0,0);
  	\draw[\BLUE](0.17,0.17)node{$\times$};
  	\draw[](3,.7)node{$\gamma_2$}  (2.5,.5)node{$\gamma_1$};
  	\draw[\BLUE,-<-=.47,>=stealth, thick](0,0).. controls +(75:3) and +(90:1) .. (3,0)
  	.. controls +(-90:1) and +(-40:3) .. (0,0);
  	\draw[ultra thick,fill=gray!20] (1.4,.15) circle (.4);
  	\draw(0,0)node{$\bullet$};
  	\end{tikzpicture}
  	\caption{The punctured intersections}
  	\label{fig:exmint}
  \end{figure}
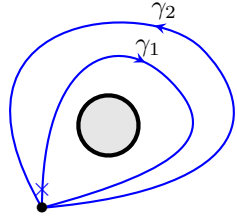
\end{itemize}
\end{example}

\begin{remark}\label{rem:FST}
Note that we have less restriction for curves and tagged curves than FST, that is, we allow self-intersections. More precisely,
\begin{itemize}
  \item the curves $\gamma$ in $\CS$ without self-intersections (i.e. $\Int(\gamma,\gamma)=0$) are precisely the arcs in the sense of \cite[Definition~2.2]{FST};
  \item the tagged curves $(\gamma,\kappa)$ in $\TC$ without self-intersections (i.e. $\Int((\gamma,\kappa),(\gamma,\kappa))=0$) are precisely the tagged arcs in the sense of \cite[Definition~2.4]{FST};
  \item for two curves $\gamma_1$ and $\gamma_2$ in $\CS$ without self-intersections, we have that $\Int(\gamma_1,\gamma_2)=0$ if and only if they are compatible in the sense of \cite[Definition~7.1]{FST};
  \item for two tagged curves $(\gamma_1,\kappa_1)$ and $(\gamma_2,\kappa_2)$ in $\TC$ without self-intersections, we have that $\Int((\gamma_1,\kappa_1),(\gamma_2,\kappa_2))=0$ if and only if they are compatible in the sense of \cite[Definition~7.4]{FST}.
\end{itemize}
\end{remark}

By Remark~\ref{rem:FST}, the following definitions of ideal triangulations and tagged triangulations are equivalent to the original ones in \cite{FST}.

\begin{definition}[Ideal triangulations and tagged triangulations \cite{FST}]\label{def:triangulation}
Let $\surf$ be a marked surface with non-empty boundary.
\begin{itemize}
\item
An \emph{ideal triangulation} is a maximal collection $\T$ of curves in $\CS$ such that $\Int(\gamma_1,\gamma_2)=0$ for any $\gamma_1,\gamma_2\in\T$.
\item
A \emph{tagged triangulation} is a maximal collection $\T$ of tagged curves in $\TC$ such that $\Int\left((\gamma_1,\kappa_1),(\gamma_2,\kappa_2)\right)=0$ for any $(\gamma_1,\kappa_1),(\gamma_2,\kappa_2)\in\T$.
\end{itemize}

\end{definition}

Any ideal/tagged triangulation $\T$ of $\surf$ consists of $n$ ordinary/tagged curves (see \cite[Proposition 2.10, Theorem 7.9]{FST}), where $n$ is the rank of $\surf$ (cf. \eqref{eq:rank}). We require $n>0$ and exclude the case of once-punctured monogon (where $n=1$) in the proofs. However, all the results hold in this case by a direct checking and thus we will not exclude this case in the statements.

A triangle in $\T$ has three distinct sides unless it is a \emph{self-folded triangle} as in the left picture of Figure~\ref{fig:self-folded}, where we call $\alpha$ the \emph{folded side} and $\beta$ the \emph{remaining side}.

\begin{figure}[htpb]\centering
\begin{tikzpicture}[scale=.7]
\draw[thick] (0,0) circle (2) to (0,-2);
\draw (0,0) node {$\bullet$} (0,-2) node {$\bullet$};
\draw (0,2) node[above] {$\beta$};\draw (0,-1) node[left] {$\alpha$};
\draw[thick](4,-2)to[bend left=60](4,0);
\draw (3.5,-1) node[left] {$\alpha$};
\draw[thick
](4,-2)to[bend right=60](4,0);
\draw (4.5,-1) node[right] {$\beta^{\bowtie}$};
\draw(4,-2)node{$\bullet$}
(4,0)node{$\bullet$}(4.19,-.15)node{$+$}(4,0)node[above]{$P$};
\end{tikzpicture}
\caption{The self-folded triangle and the corresponding tagged version}
\label{fig:self-folded}
\end{figure}
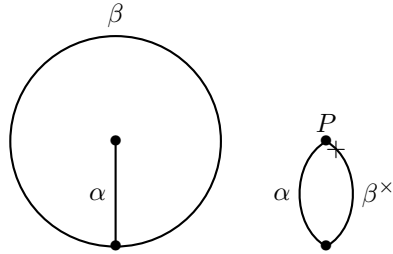

The \emph{flip} of an ideal triangulation $\T$, w.r.t. a curve $\alpha$ in $\T$, is the unique ideal triangulation $\T'$ (if it exists) that shares all curves in $\T$ but $\alpha$. One can always flip an ideal triangulation w.r.t. a curve unless it is the folded side of a self-folded triangle. To overcome this shortcoming, FST \cite{FST} introduced tagged triangulations, with tagged flips, so that every tagged triangulation can be flipped w.r.t. any tagged curve in it. The exchange graph of tagged triangulations with tagged flips is denoted by $\EGT(\surf)$,
that is, the graph whose vertices are tagged triangulations and whose edges are tagged flips.

For each curve $\gamma$ in $\CS$ with $\Int(\gamma,\gamma)=0$, we define its tagged version $\gamma^\times$ to be $(\gamma,\emptyset)$ unless $\gamma$ is a loop enclosing a puncture, as $\beta$ in the left picture of Figure~\ref{fig:self-folded}. In that case $\beta^\times$, as in the right picture of Figure~\ref{fig:self-folded}, is defined to be $(\alpha,\kappa)$, where $\alpha$ is the unique curve without self-intersections enclosed by $\beta$ and $\kappa(t)=1$ for $t$ with $\alpha(t)\in\P$. In this way, each ideal triangulation $\T$ induces a tagged triangulation $\T^\times$ consisting of the tagged versions of all curves in $\T$.

For each ideal triangulation $\T$, there is an associated quiver with potential $(Q_\T,W_\T)$ (cf. \cite{FST,ILF}). In the paper, we only study $(Q_\T,W_\T)$ in the case when $\T$ is an admissible triangulation in the following sense (see Figure~\ref{fig:exT} for example).

\begin{definition}\label{def:admissible}
An ideal triangulation $\T$ is called admissible if every puncture in $\P$ is contained in a self-folded triangle in $\T$.
\end{definition}

In particular, in such a triangulation, the folded side of each self-folded triangle connect a marked point in $\M$ and a puncture in $\P$.

In an admissible triangulation $\T$, for a curve $\alpha\in\T$, let $\pi_\T(\alpha)$ be the curve defined as follows: if $\alpha$ is the folded side of a self-folded triangle in $\T$ (see the left picture of Figure~\ref{fig:self-folded}), then $\pi_\T(\alpha)$ is the corresponding remaining side (i.e. $\beta$ in  the left picture of Figure~\ref{fig:self-folded}); if there is no such triangle, set $\pi_\T(\alpha)=\alpha$. The associated quiver with potential $(Q_\T,W_\T)$ is given by the following data (see Figure~\ref{fig:first}):
\begin{itemize}
\item the vertices of $Q_\T$ are labeled by the curves in $\T$;
\item there is an arrow from $i$ to $j$ whenever there is a non-self-folded triangle in $\T$ having $\pi_\T(i)$ and $\pi_\T(j)$ as edges with $\pi_\T(j)$ following $\pi_\T(i)$ in the clockwise orientation (which is induced by the orientation of $\surf$). For instance, the quiver for a non-self-folded triangle is shown in Figure~\ref{fig:first}.
\item each subset $\{i,j,k\}$ of $\T$ with $\pi_\T(i)$, $\pi_\T(j)$, $\pi_\T(k)$ forming an interior non-self-folded triangle in $\T$ yields a unique 3-cycle up to cyclic permutation. The potential $W_{\T}$ is the sum of all such 3-cycles.
\end{itemize}

\begin{figure}[htpb]\centering
  \begin{tikzpicture}[scale=.7]
  \foreach \j in {1,...,3}  { \draw (120*\j-30:2) coordinate (v\j);}
    \path (v1)--(v2) coordinate[pos=0.5] (x3)
              --(v3) coordinate[pos=0.5] (x1)
              --(v1) coordinate[pos=0.5] (x2);
    \foreach \j in {1,...,3}{\draw (x\j) node[\RED] (x\j){};}
    \draw[->,>=stealth,\RED] (x1) to (x3);
    \draw[->,>=stealth,\RED] (x3) to (x2);
    \draw[->,>=stealth,\RED] (x2) to (x1);
    \draw[thick] (v1)node{$\bullet$}--(v2)node{$\bullet$}--(v3)node{$\bullet$}--cycle;
  \end{tikzpicture}
\caption{The quiver associated to a non-self-folded triangle}
\label{fig:first}
\end{figure}
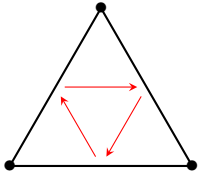

Then by Section~\ref{subsec:dg}, there is an associated cluster category, denoted by $\C(\T)$.

\subsection{Correspondence}

The objects and morphisms in $\hua{C}(\T)$ are expected to correspond to curves and intersection numbers, respectively. A \emph{cluster tilting object} $T=\oplus_{j=1}^n T_j$ in a cluster category $\C$ is an object satisfying $\Ext_{\C}^1(T,X)=0$ if and only if $X\in\add T$. The \emph{mutation} $\mu_i$ at the $i$-th indecomposable direct summand acts on a cluster tilting object $T=\oplus_{j=1}^n T_j$, by replacing $T_i$ with the unique indecomposable object $T'_i\ncong T_i$ satisfying $(T\setminus T_i)\oplus T'_i$ is a cluster tilting object.

In the unpunctured case, we have the following known results.

\begin{theorem}[\cite{BZ}]\label{thm:BZ}
If $\surf$ is unpunctured,
then there is a bijection between the set of curves and valued closed curves in $\surf$ and the set of indecomposable objects in $\C(\T)$. Under such a bijection,
\begin{itemize}
  \item rotation of curves is compatible with shift of objects;
  \item triangulations of $\surf$ one-to-one correspond to cluster tilting objects in $\C(\T)$ while
    flip of triangulations is compatible with mutation of cluster tilting objects.
\end{itemize}
\end{theorem}

\begin{theorem}[\cite{ZhZZ}]
If $\surf$ is unpunctured, then for any two curves $\gamma,\delta$, we have
\[\Int(\gamma,\delta)=\dim_\k\Ext_{\C(\T)}^1(X_\gamma,X_\delta),\]
where $X:\eta\mapsto X_\eta$ is the bijection in Theorem~\ref{thm:BZ}.
\end{theorem}

In the punctured case, we also know the following. Recall that a rigid indecomposable object in $\C(\T)$ is \emph{reachable} if it is a summand of some cluster tilting object, which is obtained from the canonical cluster tilting object by a sequence of mutations.

\begin{theorem}[\cite{BQ}]\label{thm:BQ}
Let $\TA$ be the set of tagged curves in $\TC$ without self-intersections and $\obj$ the set of reachable rigid indecomposable objects in $\C(\T)$. Under a canonical bijection
\[
    \varepsilon\colon\TA\to\obj,
\]
the tagged rotation $\rho$ on $\TA$ becomes the shift $[1]$ on $\obj$.
\end{theorem}

\subsection{Cluster(-tilting) exchange graphs}
The \emph{cluster(-tilting) exchange graph} $\CEG(\C)$ of a cluster category $\C$ is the graph whose vertices are cluster tilting objects and whose edges are mutations. There are the following known results about connectedness of cluster exchange graphs.

\begin{theorem}[\cite{BMRRT}]
If $\C$ is the cluster category of an acyclic quiver, then $\CEG(\C)$ is connected.
\end{theorem}

\begin{theorem}[\cite{BZ}]
If $\C$ is the cluster category from an unpunctured marked surface, then $\CEG(\C)$ is connected.
\end{theorem}

\section{Strings and tagged curves}\label{sec:3}
\subsection{Skewed-gentle algebras from admissible triangulations}\label{sec:QP2}

\def\X{\mathfrak{X}}
\def\bX{\overline{\X}}
\def\tX{\widetilde{\X}}

Let $\T$ be an admissible triangulation of $\surf$, i.e. every puncture is in a self-folded triangle (see Lemma~\ref{lem:ex} for the existence of $\T$), with
the associated quiver with potential $\left(Q_{\T},W_{\T}\right)$
and the cluster category $\C(\T)$.

Let $\T^o$ be the subset of $\T$ consisting of curves whose endpoints are in $\M$. Now we associate a biquiver $Q^\T=(Q^\T_0,Q^\T_1,Q^\T_2)$ with potential $W^\T$ as follows:
\begin{itemize}
  \item $Q_0^\T=\T^o$, i.e., curves in $\T$ which are sides of non-self-folded triangles correspond to vertices in $Q_0^\T$;
  \item there is a solid arrow from $i$ to $j$ in $Q_1^\T$ whenever there is a non-self-folded triangle $\Delta$ in $\T$ such that $\Delta$ has sides $i$ and $j$ with $j$ following $i$ in the clockwise orientation;
  \item there is a dashed loop at $i$ in $Q_2^\T$, denoted by $\varepsilon_i$,
        whenever $i$ is the remaining side of a self-folded triangle;
  \item each non-self-folded triangle in $\T$ induces
        a unique 3-cycle up to cyclic permutation.
        The potential $W^{\T}$ is the sum of all such 3-cycles.
\end{itemize}
See the example in Section~\ref{sec:ex}. By construction, there are no loops in $Q^\T_1$, any arrow in $Q^\T_2$ is a loop and there is at most one loop in $Q^\T_2$ at each vertex. Hence the biquiver $Q^\T$ satisfies the conditions on biquivers in Section~\ref{subsec:clannish}. Let $Z=\partial W^\T=\{\partial_a W^\T:a\in Q_1^\T\}$. We have the following straightforward observation.

\begin{lemma}\label{lem:z}
The set $Z$ consists of $\beta\alpha$ for each pair $\alpha,\beta\in Q_1^\T$ such that they are from the same non-self-folded triangle in $\T$.
\end{lemma}

\begin{proposition}\label{prop:clan}
The pair $(Q^\T,Z)$ is skewed-gentle and the algebra $\Lambda^\T:=\k Q^\T/(R)$ is a skewed-gentle algebra, where $R=Z\cup\{\varepsilon^2=\varepsilon\mid\varepsilon\in Q^\T_2\}$.
\end{proposition}
\begin{proof}
By Lemma~\ref{lem:z}, each element in $Z$ is of the form $\beta\alpha$ for some $\alpha,\beta$ in $Q_1^\T$. Since every vertex $i\in Q^\T_0$ is a side of a non-self-folded triangle in $\T$, there are two possible cases.
\begin{enumerate}
  \item[(1)] The curve $i$ is a common side of two non-self-folded triangles in $\T$, see Figure~\ref{fig:ordinary quiver}. Then there are no dashed loop at $i$ and there are at most two solid arrows $\alpha_1,\alpha_2$ ending at $i$ and at most two solid arrows $\beta_1,\beta_2$ starting at $i$. By Lemma~\ref{lem:z}, $\beta_1\alpha_1\in Z$, $\beta_2\alpha_2\in Z$, $\beta_1\alpha_2\notin Z$ and $\beta_2\alpha_1\notin Z$ (if exist).
  \item[(2)] The curve $i$ is a common side of a non-self-folded triangle and a self-folded triangle in $\T$, see Figure~\ref{fig:self-folded quiver}. Then there is a dashed loop at $i$ and there is at most one solid arrow $\alpha$ ending at $i$ and at most one solid arrow $\beta$ starting at $i$. By Lemma~\ref{lem:z}, $\beta\alpha\in Z$ (if exist).
\end{enumerate}
Hence by Definition~\ref{def:gentle}, $(Q^\T,Z)$ is skewed-gentle and the algebra $\Lambda^\T=\k Q^\T/(R)$ is a skewed-gentle algebra.
\end{proof}

\begin{figure}[htpb]\centering
	\begin{tikzpicture}[scale=.4]
	\draw(-1,0)node{};
	\draw[ultra thick]plot [smooth,tension=1] coordinates {(-120:6) (-95:4.5) (-60:6)};
	\draw[ultra thick]plot [smooth,tension=1] coordinates {(110:5) (85:4) (70:5)};
	\draw[ultra thick]plot [smooth,tension=1] coordinates {(30:5) (10:4) (-10:5)};
	\draw[ultra thick]plot [smooth,tension=1] coordinates {(150:5) (170:4) (185:5)};
	\draw[\RED,thick](-95:4.5)to[bend left](10:4)to[bend left](85:4)to(-95:4.5)
	to[bend right](170:4)to[bend right](85:4);
	\draw[<-,>=stealth,bend left](.9,2.65)to(0.25,2.5)node[below right]{$_{\beta_2}$};
	\draw[<-,>=stealth,bend left](0.15,2.4)to(-0.5,2.5);\draw(-0.2,2.3)node[below]{$_{\alpha_1}$};
	
	\draw[<-,>=stealth,bend left](-1,-2)to(-.25,-2);\draw(-.45,-2)node[above]{$_{\beta_1}$};
	\draw[<-,>=stealth,bend left](-.1,-1.5)to(0.55,-1.7);\draw(.4,-1.6)node[above]{$_{\alpha_2}$};
	\draw(0:8)node{$\Longrightarrow$};
	\draw[thick](-95:4.5)node{$\bullet$}(85:4)node{$\bullet$}
	(10:4)node{$\bullet$}(170:4)node{$\bullet$}(0,0.5)node[left]{$i$};
	\end{tikzpicture}
	\begin{tikzpicture}[xscale=.6,yscale=.4]
	\draw(0,0)node{$\xymatrix@R=1pc{
			\ar[dr]^{\alpha_1}&&\\&i\ar@{<-}[dr]_{\alpha_2}\ar[ur]^{\beta_2} \\\ar@{<-}[ur]_{\beta_1}&&}$};
	\draw[bend left,dashed](45:1)to(-45:1);\draw[bend right,dashed](135:1)to(-135:1);
	\draw(0,-5)node{$ $};
	\end{tikzpicture}
	\caption{Non-self-folded triangles with the corresponding quivers}
	\label{fig:ordinary quiver}
	
	\begin{tikzpicture}[scale=.4,rotate=0]
	\draw(0,6)node{};
	\draw[ultra thick]plot [smooth,tension=1] coordinates {(-120:5.5) (-95:4.5) (-60:5.5)};
	\draw[ultra thick]plot [smooth,tension=1] coordinates {(120:5) (85:4) (60:5)};
	\draw[\RED,thick](-95:4.5)to[bend left=60](85:4);
	\draw[\RED,thick](-95:4.5)to[bend right=60](85:4);
	
	\draw[\RED, thick](-95:4.5)
	.. controls +(60:6) and +(100:7) .. (-95:4.5) (0,0);
	\draw(0,1)node[below,black]{$i$};
	
	\draw[<-,>=stealth,bend left](-1.3,-3.5)to(-.6,-3.2);\draw(-1,-2.7)node{$_{\beta}$};
	\draw[<-,>=stealth,bend left](.4,-3)to(.8,-3.5);\draw(1,-2.7)node{$_{\alpha}$};
	\draw(7,0)node{$\Longrightarrow$};
	\draw[thick](-95:4.5)node{$\bullet$} (85:4)node{$\bullet$}(0,-1)node{$\bullet$};
	\end{tikzpicture}
	\begin{tikzpicture}[xscale=-.6,yscale=.4,rotate=180]
	\draw(0,0)node{$\xymatrix{
			\ar@{<-}[dr]_{\beta}&& \\&i \ar@{<-}[ur]_{\alpha}  \\\\ }$};
	\draw[bend right,dashed](-115:1.2)to(-65:1.2);
	\draw(0,-5)node{$ $};\draw(0,2)node{$\varepsilon$};
	\draw[->,>=stealth,dashed](120:.5).. controls +(135:2) and +(60:2) .. (45:.5);
	\end{tikzpicture}
	\caption{Self-folded triangles with the corresponding quivers}
	\label{fig:self-folded quiver}
\end{figure}

Let $Q_0^{\Sp}$ be the subset of $Q^\T_0$ consisting of vertices where there are dashed loops.

\begin{remark}\label{rem:compare}
Comparing the constructions of $(Q_\T,W_\T)$ and $(Q^\T,W^\T)$, one can obtain $(Q_\T,W_\T)$ (up to isomorphism) from $(Q^\T,W^\T)$ by splitting each vertex in $Q_0^{\Sp}$ into two vertices and removing all the dashed loops. More precisely,
\begin{itemize}
  \item vertices in $Q_\T$ are indexed by elements in $Q_0^\T\cup \{i'\mid i\in Q_0^{\Sp}\}$;
  \item arrows from $i$ to $j$ in $Q_\T$ are indexed by $^j\alpha^i$ induced by arrows $\alpha:\pi(i)\rightarrow \pi(j)$ in $Q^\T_1$, where $\pi$ is the map on $Q_0^\T\cup \{i'\mid i\in Q_0^{\Sp}\}$ sending $i'$ to $i$ and being identity on $Q_0^\T$;
  \item the potential $W_\T$ is the sum of cycles that are obtained from cycles in $W^\T$ by replacing each $\alpha$ with $^j\alpha^i$ for possible $i$ and $j$.
\end{itemize}
See the example in Section~6.
\end{remark}

Now we prove that the Jacobian algebra $\hua{P}(Q_\T,W_\T)$ is isomorphic to the skewed-gentle algebra $\Lambda^\T$.

\begin{proposition}\label{prop:iso}
There is an algebra isomorphism \[\varphi:\hua{P}(Q_{\T},W_{\T})\cong\Lambda^\T.\]
\end{proposition}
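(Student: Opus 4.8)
The plan is to write down explicitly a candidate for the isomorphism $\varphi$ on generators and then verify that it is well-defined and bijective. Recall from Remark~\ref{rem:compare} that the vertices of $Q_\T$ are indexed by $Q_0^\T \cup \{i' \mid i \in Q_0^{\Sp}\}$ with the projection $\pi$, and each arrow $\alpha\colon \pi(i) \to \pi(j)$ in $Q^\T_1$ gives rise to a family of arrows ${}^j\alpha^i$ in $Q_\T$. First I would send each vertex idempotent $e_i$ of $\hua{P}(Q_\T,W_\T)$ to the corresponding idempotent of $\Lambda^\T$ obtained by ``folding'' the split vertices: $e_i \mapsto e_{\pi(i)}$ if $i$ is not a split vertex, and for a split pair $i, i'$ over a vertex $q$ carrying a dashed loop $\varepsilon_q$, send the two idempotents to the two orthogonal idempotents $\tfrac{1}{2}(e_q \pm \varepsilon_q)$ of $\Lambda^\T$ (here I use that $\operatorname{char}\k \neq 2$, as is standard in the skew-gentle setting and recorded in Appendix~\ref{app:clan}); the two values of the sign are fixed consistently by the data of the self-folded triangle. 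On arrows, ${}^j\alpha^i \mapsto e_{\varphi(j)}\, \alpha\, e_{\varphi(i)}$, i.e. the arrow $\alpha$ of $Q^\T_1$ cut down by the relevant idempotents.

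Next I would check that $\varphi$ respects the relations. On the $\Lambda^\T$ side the relations are $R = Z \cup \{\varepsilon^2 = \varepsilon\}$; the idempotent relations $\varepsilon^2 = \varepsilon$ are absorbed into the definition of the target idempotents, so what remains is to see that the defining relations $\overline{\partial W_\T}$ of the Jacobian algebra map into the two-sided ideal $(Z)$. By Lemma~\ref{lem:z}, $Z$ consists of the length-two paths $\beta\alpha$ with $\alpha,\beta$ in the same triangle of $\T$, and $\partial_a W_\T$ is, for each arrow $a$ of $Q_\T$ lying on a 3-cycle $cba$ of $W_\T$, the path $cb$. Since by the last bullet of Remark~\ref{rem:compare} the potential $W_\T$ is obtained from $W^\T$ by replacing each $\alpha$ with $\sum {}^j\alpha^i$, each cyclic derivative $\partial_a W_\T$ maps under $\varphi$ to an idempotent-truncation of the corresponding $\partial_{\bar a} W^\T \in Z$, hence lies in $(Z)$; a small bookkeeping check, using that at a split vertex the two halves of a cycle distribute correctly over the two idempotents, shows nothing is lost. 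Conversely the generators $\beta\alpha \in Z$ and the $\varepsilon$-relations are hit, so $\varphi$ descends to a surjection $\hua{P}(Q_\T,W_\T) \twoheadrightarrow \Lambda^\T$.

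For injectivity (equivalently, to upgrade surjectivity to an isomorphism) I would compare the two algebras dimension-by-dimension, or basis-by-basis: both $\hua{P}(Q_\T,W_\T)$ and $\Lambda^\T$ are finite-dimensional, and one has an explicit description of a $\k$-basis of $\Lambda^\T$ in terms of strings (paths in $Q^\T$ avoiding $Z$ and the $\varepsilon$-relations), recalled in Appendix~\ref{app:clan}. I would exhibit a mutually inverse map $\psi\colon \Lambda^\T \to \hua{P}(Q_\T,W_\T)$ on generators --- $e_q \mapsto e_q + \sum_{q \text{ split}} e_{q'}$, $\varepsilon_q \mapsto \pm(e_q - e_{q'})$, and each arrow $\alpha$ to $\sum_{i,j} {}^j\alpha^i$ --- check that it is well-defined (the Jacobian relations of the target are exactly what is needed for $\psi$ of a relation in $R$ to vanish, by the same correspondence of potentials), and verify $\varphi\psi = \id$, $\psi\varphi = \id$ on generators. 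This is essentially the folding/unfolding of a skew-gentle algebra into its ``blown-up'' gentle form and is a routine but slightly tedious verification.

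The main obstacle I anticipate is purely bookkeeping: getting the signs and the split-vertex idempotents $\tfrac12(e_q \pm \varepsilon_q)$ coherent across all the arrows incident to a split vertex, so that the cyclic derivatives of $W_\T$ land precisely in $(Z)$ rather than only up to correction terms, and so that $\psi$ is well-defined in the other direction. There is no conceptual difficulty --- the structure of each self-folded triangle and its unique adjacent non-self-folded triangle is completely rigid (Figure~\ref{fig:self-folded quiver}), so only finitely many local configurations must be checked --- but care is needed to ensure that the relation $\varepsilon^2 = \varepsilon$ (rather than $\varepsilon^2 = 1$) is the version used and that $\operatorname{char}\k \neq 2$ is invoked exactly where the idempotent splitting requires it.
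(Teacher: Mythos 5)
Your overall strategy is the same as the paper's: fold the split vertices of $Q_\T$ onto the dashed-loop vertices of $Q^\T$ by an idempotent decomposition, send ${}^j\alpha^i$ to the idempotent truncation $\varphi(e_j)\alpha\varphi(e_i)$, and match the cyclic derivatives of $W_\T$ with $\partial W^\T=Z$ via Remark~\ref{rem:compare} and Lemma~\ref{lem:z}. However, there is a concrete error in your choice of idempotents. With the convention used in this paper the special loop satisfies $\varepsilon^2=\varepsilon$ (it is an \emph{idempotent}, not an involution), so the elements $\tfrac12(e_q\pm\varepsilon_q)$ are \emph{not} idempotents: $\bigl(\tfrac12(e_q+\varepsilon_q)\bigr)^2=\tfrac14(e_q+3\varepsilon_q)\neq\tfrac12(e_q+\varepsilon_q)$. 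The formula you wrote is the correct one only for the involution convention $\varepsilon^2=e_q$ (the very pitfall you flag in your last paragraph, but then fall into). The correct orthogonal pair decomposing $e_q$ modulo $\varepsilon_q^2-\varepsilon_q$ is $\{\varepsilon_q,\ e_q-\varepsilon_q\}$, which is what the paper uses; no hypothesis $\operatorname{char}\k\neq 2$ is needed or made anywhere in the paper (the appendix records no such assumption). The same issue infects your proposed inverse $\psi$: one should send $\varepsilon_q\mapsto e_{q'}$ (one of the two split idempotents), not $\pm(e_q-e_{q'})$, whose square is $e_q+e_{q'}$.

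Beyond that, your route to injectivity (surjectivity plus a dimension count or an explicit two-sided inverse on generators) is more laborious than the paper's: the paper first observes that splitting the idempotents gives an algebra isomorphism $\k Q_\T\cong \k Q^\T/(R^{\Sp})$ \emph{before} imposing any potential relations, invokes \cite[Theorem 5.7]{ILF} to replace the completed Jacobian algebra by the ordinary quotient $\k Q_\T/\partial W_\T$ (a point your sketch glosses over when it treats $\overline{\partial W_\T}$ as an ordinary ideal), and then only has to check the single identity $\varphi(\partial W_\T)=\partial W^\T$, which is immediate from the description of $W_\T$ in Remark~\ref{rem:compare}. Once you replace your idempotents by $\{\varepsilon_q,e_q-\varepsilon_q\}$ and handle the completion via finite-dimensionality, your argument goes through, but it is essentially the paper's proof with extra bookkeeping.
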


\begin{proof}
For each quiver, denote by $e_i$ the trivial path associated to a vertex $i$.
Noticing that $\varepsilon_i^2-\varepsilon_i\in R$ for each $i\in Q^{Sp}_0$,
a complete set of primitive orthogonal idempotents of $kQ^\T/(R^{\Sp})$ is
\[\{e_i+(R^{\Sp})\mid i\in Q_0\setminus Q_0^{\Sp}\}\cup\{\varepsilon_i+(R^{\Sp}), e_i-\varepsilon_i+(R^{\Sp})\mid i\in Q^{Sp}_0\},\]
where $R^{\Sp}=\{\varepsilon_i^2-\varepsilon_i\mid i\in Q_0^{\Sp}\}$.
Then using the recovery of $(Q_\T,W_\T)$ from $(Q^\T,W^\T)$ in Remark~\ref{rem:compare}, there is an isomorphism $\varphi$ of algebras from $kQ_\T$ to $kQ^\T/(R^{\Sp})$, which sends ${}^j\alpha^i$ to $\varphi(e_j)\alpha\varphi(e_i)$ where
\[\varphi(e_i)=\begin{cases}e_i+(R^{Sp})&\text{if $i\in Q^\T_0\setminus Q^{Sp}_0,$}\\ e_i-\varepsilon_i+(R^{Sp}) &\text{if $i\in Q^{Sp}_0$,}\\
\varepsilon_k+(R^{Sp}) & \text{if $i=k'$ for $k\in Q^{\Sp}_0$.}
\end{cases}\]
By \cite[Theorem 5.7]{ILF}, the Jacobian algebra $\hua{P}(Q_{\T},W_{\T})$ is isomorphic to
$kQ_{\T}/\partial W_{\T}$.
Then what is left to show is $\varphi(\partial W_{\T})=\partial W^{\T}$,
which directly follows from the recovery of $W_\T$ from $W^\T$ in Remark~\ref{rem:compare}.
\end{proof}

\begin{remark}
Note that Proposition~\ref{prop:clan} and Proposition~\ref{prop:iso} implies that
the Jacobian algebra $\hua{P}(Q_{\T},W_{\T})$ is a skewed-gentle algebra for any admissible triangulation $\T$
of a marked surface. This result was first announced by Labardini-Fragoso (cf. \cite{LF3}) and was first proved in \cite{GLS}.
\end{remark}

\subsection{Correspondence}\label{sec:corr.}

Denote by $\rep(Q^\T,W^\T)$ the category of finite dimensional $\k$-linear representations of $Q$ bounded by $R=\partial W^\T\cup\{\varepsilon^2=\varepsilon\mid\varepsilon\in Q^\T_2\}$.
By the equivalence \eqref{eq:eq} and Proposition~\ref{prop:iso}, we have an equivalence
\begin{equation}\label{eq:eq2}
F_\T\colon\C(\T)/(T_\T)\simeq\rep(Q^\T,W^\T).
\end{equation}
where $T_\T$ denotes the canonical cluster tilting object in $\C(\T)$ (cf. Section~\ref{subsec:dg}). So one can regard the set of indecomposable objects in $\C(\T)$ as
the union of the set of indecomposable representations in $\rep(Q^\T,W^\T)$ and the set of indecomposable direct summands of $T_\T$. Note that indecomposable direct summands of $T_\T$ are indexed by curves in $\T$ and hence also by tagged curves in $\T^\times$ (which is the tagged version of $\T$). Thus, we can write \begin{equation}\label{eq:ccto}
T_\T=\oplus_{(\gamma,\kappa)\in\T^\times}T_{(\gamma,\kappa)}.
\end{equation}

\begin{definition}\label{def:stringob}
We call an indecomposable object $X$ in $\C(\T)$ a \emph{string object} if either $X$ is a direct summand of the canonical cluster tilting object $T_\T$, or $F_\T(X)$ is an indecomposable representation in $\rep(Q^\T,W^\T)$ which is in the image of the injective map in Theorem~\ref{thm:Deng}. Denote the set of string objects in $\C(\T)$ by $\so(\T)$.
\end{definition}

\def\as{\mathfrak{a}}

To describe indecomposable representations in $\rep(Q^\T,W^\T)$, we shall use notions and notations stated in Section~\ref{app:clan}. Recall that from the biquiver $Q^\T$, in Section~\ref{subsec:letters}, we constructed a new biquiver $\widehat{Q}=(\widehat{Q}_0,\widehat{Q}_1,\widehat{Q}_2)$ by adding two new solid arrows $a_{i_\pm}$ (whose terminal vertices are also new vertices) for each vertex $i\in Q^\T_0$. Recall that a letter is an arrow in $\widehat{Q}$ or its inverse. For each $i\in Q^\T_0\subset\widehat{Q}_0$ the set $L(i)=\{l\in L\mid s(l)=i\}$ is divided into two disjoint subsets $L_\theta(i)$, $\theta\in\{\pm\}$, with linear orders satisfying certain conditions.

\begin{construction}\label{cstr:as}
We associate an arc segment $\as(l)$ to each letter $l$ as follows.
\begin{itemize}
  \item For $l=\alpha$ with $\alpha$ an arrow from $i$ to $j$ in $Q^\T_1\subset\widehat{Q}_1$, $\alpha$ is induced from a triangle $\Delta$ in $\T$ having $i$ and $j$ as sides with $j$ following $i$ in the clockwise orientation. Then we associate the arc segment (which is unique up to homotopy) in $\Delta$ starting at (a point in) $i$ and ending at (a point in) $j$.
  \item For $l=\varepsilon$ with $\varepsilon$ a dashed loop at $i$, we associate the arc segment in the self-folded triangle whose remaining side is $i$ with the clockwise orientation.
  \item For $l=a_{i_\theta}$ with $a_{i_\theta}$ an arrow in $\widehat{Q}_1\setminus Q^\T_1$, its associated segment is in the same triangle as the associated arc segments of other letters in $L_\theta(i)$. It starts at (a point in) $i$ and ends at a point in $\M\cup \P$.
  \item The arc segment $\as(l^{-1})$ is the same as $\as(l)$ but with the opposite orientation.
\end{itemize}
\end{construction}

See Figure~\ref{fig:seglet} (cf. Figure~\ref{fig:ordinary quiver} and \ref{fig:self-folded quiver}), where the disjoint subsets $L_\theta(i)$ are given by $L_+(i)=\{\alpha_1^{-1}>a_{i_+}>\beta_1\}$ and $L_-(i)=\{\alpha_2^{-1}>a_{i_-}>\beta_2\}$ for the left picture and by $L_+(i)=\{\alpha^{-1}>a_{i_+}>\beta\}$ and $L_-(i)=\{\varepsilon^{-1}>a_{i_-}>\varepsilon\}$ for the right picture. Note that $\alpha,\beta,\alpha_1,\beta_1,\alpha_2,\beta_2$ might not exist.

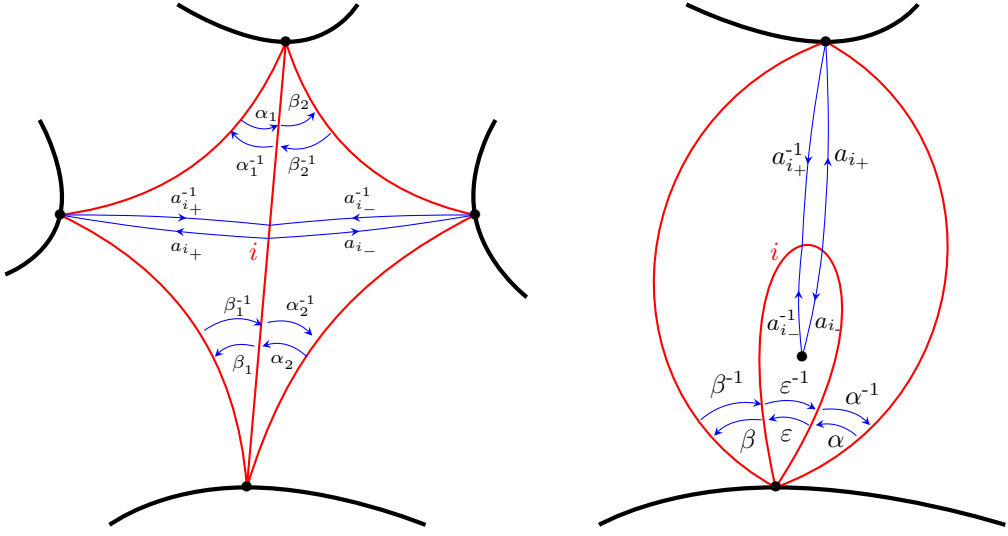
\begin{figure}[htpb]\centering
\begin{tikzpicture}[scale=.7]
\draw(-1,0)node{};
\draw[ultra thick]plot [smooth,tension=1] coordinates {(-120:6) (-95:4.5) (-60:6)};
\draw[ultra thick]plot [smooth,tension=1] coordinates {(110:5) (85:4) (70:5)};
\draw[ultra thick]plot [smooth,tension=1] coordinates {(30:5) (10:4) (-10:5)};
\draw[ultra thick]plot [smooth,tension=1] coordinates {(150:5) (170:4) (185:5)};
\draw[\RED,thick](-95:4.5)to[bend left=23](10:4)to[bend left](85:4)to(-95:4.5)
to[bend right](170:4)to[bend right](85:4);

\draw[->-=.6,>=stealth,bend left=3,\BLUE] (170:4) to (0.05,.5);
\draw(-1.5,.95)node{${_{a_{i_+}^\text{-1}}}$};
\draw[-<-=.6,>=stealth,bend left=-3,\BLUE] (170:4) to (0,0.25);
\draw(-1.5,0)node{${_{a_{i_+}}}$};

\draw[\RED](0,0)node[left]{$i$};

\draw[->-=.6,>=stealth,bend left=-3,\BLUE](10:4) to (0.05,.5);
\draw(1.8,.95)node{${_{a_{i_-}^\text{-1}}}$};
\draw[-<-=.6,>=stealth,bend left=3,\BLUE](10:4) to (0,.25);
\draw(1.8,0.05)node{${_{a_{i_-}}}$};

\draw[<-,>=stealth,bend left,\BLUE](.9,2.65)to(0.25,2.4);
\draw(0.21,2.5)node[above right]{${_{\beta_2}}$};
\draw[<-,>=stealth,bend left,\BLUE](0.2,2.4)to(-0.5,2.5);
\draw(0,2.3)node[above]{${_{\alpha_1}}$};

\draw[->,>=stealth,bend left,\BLUE](1.2,2.65-.2-.2)to(0.25,2.4-.2-.2);
\draw(0.21,2.5-.5)node[below right]{${_{\beta_2^\text{-1}}}$};
\draw[->,>=stealth,bend left,\BLUE](0.1,2.4-.4)to(-0.7,2.5-.2);
\draw(-0.3,2.3-.3)node[below]{${_{\alpha_1^\text{-1}}}$};

\draw[<-,>=stealth,bend left,\BLUE](-1,-2)to(-.25,-2+.2);
\draw(-.45,-1.8)node[below]{${_{\beta_1^{}}}$};
\draw[->,>=stealth,bend left,\BLUE](0,-1.35)to(0.85,-1.6);
\draw(.65,-1.4)node[above]{${_{\alpha_2^\text{-1}}}$};

\draw[->,>=stealth,bend left,\BLUE](-1.2,-2+.5)to(-.1,-2+.6);
\draw(-.545,-1.4)node[above]{${_{\beta_1^\text{-1}}}$};
\draw[<-,>=stealth,bend left,\BLUE](-.1,-1.5-.3)to(0.75,-1.7-.3);
\draw(.3,-1.8)node[below]{${_{\alpha_2^{}}}$};

\draw[thick](-95:4.5)node{$\bullet$}(85:4)node{$\bullet$}
    (10:4)node{$\bullet$}(170:4)node{$\bullet$};
\end{tikzpicture}
\qquad
\begin{tikzpicture}[yscale=.7,xscale=.9]
\draw(-1,0)node{}(0,6)node{};
\draw[ultra thick]plot [smooth,tension=1] coordinates {(-120:6) (-95:4.5) (-60:6)};
\draw[ultra thick]plot [smooth,tension=1] coordinates {(110:5) (85:4) (70:5)};
\draw[\RED,thick](-95:4.5)to[bend left=60](85:4);
\draw[\RED,thick](-95:4.5)to[bend right=60](85:4);

\draw[\RED, thick](-95:4.5).. controls +(63:6) and +(100:7) .. (-95:4.5);

\draw[->-=.6,>=stealth,bend left=-3,\BLUE] (85:4) to (0,.1);
\draw(-.17,1.75)node{${{a_{i_+}^\text{-1}}}$};
\draw[-<-=.6,>=stealth,bend left=3,\BLUE] (85:4) to (0.3,0.05);
\draw(.75,1.75)node{${{a_{i_+}}}$};
\draw[\RED](-.2,0)node[left]{$i$};

\draw[-<-=.6,>=stealth,bend left=-3,\BLUE] (0,-2) to (0.3,0.05);
\draw[->-=.6,>=stealth,bend right=-5,\BLUE] (0,-2) to (0,.1);
\draw(-0.25,-1.4)node{\small{${a_{i_-}^\text{-1}}$}} (.4,-1.4)node{\small{${a_{i_\text{-}}}$}};

\draw[<-,>=stealth,bend left,\BLUE](-.5,-3.2)to(.1,-3.3);
\draw(-.2,-3.2)node[below]{${{{\varepsilon}^{_{^{\text{}}}}}}$};
\draw[<-,>=stealth,bend left,\BLUE](-1.3,-3.5)to(-.6,-3.2);
\draw(-.8,-3.2)node[below]{${{\beta}}$};
\draw[<-,>=stealth,bend left,\BLUE](.2,-3.3)to(.8,-3.5);
\draw(.5,-3.6)node{${{\alpha}}$};

\draw[->,>=stealth,bend left,\BLUE](-.55,-3.2+.3)to(.2,-3.3+.3);
\draw(-.1,-3.2+.3+.8)node[below]{${{{\varepsilon}^{\text{-1}}}}$};
\draw[->,>=stealth,bend left,\BLUE](-1.5,-3.5+.3)to(-.6,-3.2+.3);
\draw(-1.1,-3.2+.4+.7)node[below]{${{\beta}^{\text{-1}}}$};
\draw[->,>=stealth,bend left,\BLUE](.3,-3.3+.3)to(1,-3.5+.2);
\draw(.9,-3.6+.3+.6)node{${{\alpha}^{\text{-1}}}$};

\draw[thick](-95:4.5)node{$\bullet$} (85:4)node{$\bullet$}(0,-2)node{$\bullet$};
\end{tikzpicture}
\caption{The arc segments associated to letters}
\label{fig:seglet}
\end{figure}

By Construction~\ref{cstr:as}, any arc segment associated to a letter in $L(i)$ starts at $i$. The following lemma gives some basic topological interpretation of notions about letters.

\begin{lemma}\label{lem:compprod}
For any two letters $l_1,l_2\in L(i)$,
\begin{enumerate}
  \item[(1)] $l_1$ and $l_2$ are in the same subset $L_\theta(i)$ if and only if $\as(l_1)$ and $\as(l_2)$ are in the same triangle;
  \item[(2)] $l_1>l_2$ if and only if they are of one of the forms in Figure~\ref{fig:orders}~(i) with
      $\as(l_i)$ being the segment of $\gamma_i$ cut out by the triangle;
  \item[(3)] $l=l_i$ is punctured if and only if one of the endpoints of $\as(l)$ is a puncture.
\end{enumerate}
\end{lemma}

\begin{figure}[htpb]\centering
	{\large(i)}\quad\qquad
	\begin{tikzpicture}[yscale=.4,rotate=180]
	\draw[\RED,thick,bend right=15] (0,0)to (2,3) to (2,-3) to(0,0);
	\draw (0,0)node{$\bullet$} (2,3)node{$\bullet$} (2,-3)node{$\bullet$};
	\draw[\BLUE,->-=.7,>=stealth,thick,bend right=15](3,-.5)to(.5,-1.5);
	\draw[\BLUE,->-=.7,>=stealth,thick,bend left=15](3,.5)to(.5,1.5);
	\draw[\BLUE,thick](2.3,1.6)node[above]{$\gamma_1$};
	\draw[\BLUE,thick](2.3,-1.6)node[below]{$\gamma_2$};
	\end{tikzpicture}\qquad
	\begin{tikzpicture}[yscale=.4,rotate=180]
	\draw[\RED,thick,bend right=15] (0,0)to (2,3) to (2,-3) to(0,0);
	\draw (0,0)node{$\bullet$} (2,3)node{$\bullet$} (2,-3)node{$\bullet$};
	\draw[\BLUE,->-=.7,>=stealth,thick,bend right=5](3,-.5)to(0,0);
	\draw[\BLUE,->-=.7,>=stealth,thick,bend left=15](3,.5)to(.5,1.5);
	\draw[\BLUE,thick](2.3,1.6)node[above]{$\gamma_1$};
	\draw[\BLUE,thick](2.3,-1.4)node[below]{$\gamma_2$};
	\end{tikzpicture}\qquad
	\begin{tikzpicture}[yscale=.4,rotate=180]
	\draw[\RED,thick,bend right=15] (0,0)to (2,3) to (2,-3) to(0,0);
	\draw (0,0)node{$\bullet$} (2,3)node{$\bullet$} (2,-3)node{$\bullet$};
	\draw[\BLUE,->-=.7,>=stealth,thick,bend left=5](3,.5)to(0,0);
	\draw[\BLUE,->-=.7,>=stealth,thick,bend right=15](3,-.5)to(.5,-1.5);
	\draw[\BLUE,thick](2.3,1.4)node[above]{$\gamma_1$};
	\draw[\BLUE,thick](2.3,-1.6)node[below]{$\gamma_2$};
	\end{tikzpicture}
	\begin{tikzpicture}[xscale=.8,yscale=1.3]
	\draw(-1.3,0)node[white]{x};
	\draw[thick,->-=.5,>=stealth,\BLUE](.7,.4)node[above]{$\gamma_2$}.. controls +(0:1) and +(120:1) ..(3.5,-.5);
	\draw[thick,->-=.5,>=stealth,\BLUE](.7,-.4)node[below]{$\gamma_1$}.. controls +(0:1) and +(-120:1) ..(3.5,.5);
	\draw[\RED,thick]plot [smooth,tension=1] coordinates {(4,0) (2.5,.6)(1.3,0) (2.5,-.6) (4,0)};
	\draw(4,0)node{$\bullet$}(2,0)node{$\bullet$};
	\end{tikzpicture}\qquad
	\begin{tikzpicture}[xscale=.8,yscale=1.3]
	\draw[thick,->-=.5,>=stealth,\BLUE](.7,.4)node[above]{$\gamma_2$}.. controls +(0:1) and +(120:1) ..(3.5,-.5);
	\draw[thick,->-=.7,>=stealth,\BLUE](.7,0)node[below]{$\gamma_1$}to(2,0);
	\draw[\RED,thick]plot [smooth,tension=1] coordinates {(4,0) (2.5,.6)(1.3,0) (2.5,-.6) (4,0)};
	\draw(4,0)node{$\bullet$}(2,0)node{$\bullet$};
	\end{tikzpicture}\qquad
	\begin{tikzpicture}[xscale=.8,yscale=1.3]
	\draw[thick,->-=.5,>=stealth,\BLUE](.7,-.4)node[below]{$\gamma_1$}.. controls +(0:1) and +(-120:1) ..(3.5,.5);
	\draw[thick,->-=.7,>=stealth,\BLUE](.7,0)node[above]{$\gamma_2$}to(2,0);
	\draw[\RED,thick]plot [smooth,tension=1] coordinates {(4,0) (2.5,.6)(1.3,0) (2.5,-.6) (4,0)};
	\draw(4,0)node{$\bullet$}(2,0)node{$\bullet$};
	\end{tikzpicture}
	
	\qquad
	
	\qquad
	
	\qquad
	
	\begin{tikzpicture}[yscale=.5,rotate=180]
	\draw(5.2,-1)node[left]{\large(ii)};
	\draw[\RED,thick,bend right=15] (0,0)to (2,3) to (2,-3) to(0,0);
	\draw (0,0)node{$\bullet$} (2,3)node{$\bullet$} (2,-3)node{$\bullet$};
	\draw[\BLUE,->-=.7,>=stealth,thick,bend left=25](3,-.7)to(0,0);
	\draw[\BLUE,->-=.7,>=stealth,thick,bend right=25](3,.7)to(0,0);
	\draw[\BLUE,thick](2.3,1.6)node[above]{$\gamma_1$};
	\draw[\BLUE,thick](2.3,-1.6)node[below]{$\gamma_2$};
	\end{tikzpicture}\qquad\qquad
	\begin{tikzpicture}[xscale=.8,yscale=1.3]
	\draw(0,1)node[white]{x} (0,-1)node[white]{x};
	\draw[thick,->-=.8,>=stealth,\BLUE](.7,.5)to[bend left](2,0);
	\draw[thick,\BLUE](1,.5)node[above]{$\gamma_2$};
	\draw[thick,\BLUE](1,-.5)node[below]{$\gamma_1$}(4,-1)node{$ $};
	\draw[thick,->-=.8,>=stealth,\BLUE](.7,-.5)to[bend right](2,0);
	\draw[\RED,thick]plot [smooth,tension=1] coordinates {(4,0) (2.5,.6)(1.3,0) (2.5,-.6) (4,0)};
	\draw(4,0)node{$\bullet$}(2,0)node{$\bullet$}(6,0)node{$ $};
	\end{tikzpicture}
	\caption{The orders}
	\label{fig:orders}
\end{figure}
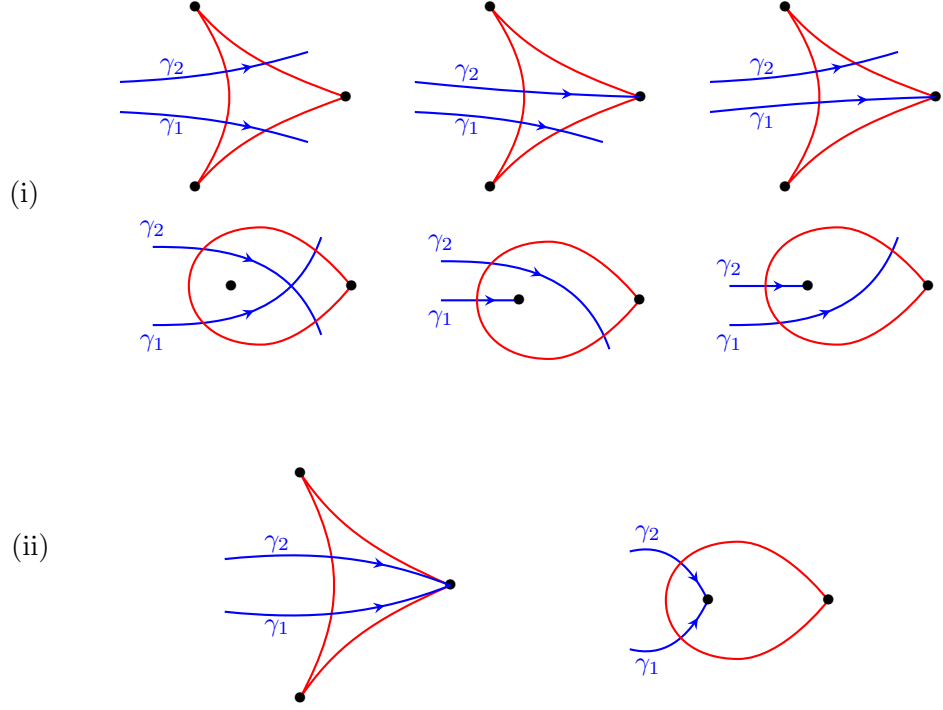

\begin{proof}
For (1), recall that if $\gamma,\delta^{-1}\in L(i)$ for two solid arrows $\gamma$ and $\delta$ then $\gamma$ and $\delta^{-1}$ are in the same subset $L_\theta(i)$ if and only if $\gamma\delta\in Z$. Then by Lemma~4.1, $\gamma$ and $\delta$ are from the same triangle. By Construction~\ref{cstr:as}, $\as(\gamma)$ and $\as(\delta^{-1})$ are in the same triangle. Also by  Construction~\ref{cstr:as}, $\as(\varepsilon)$ and $\as(\varepsilon^{-1})$ are in the same triangle for a dashed loop $\varepsilon$ and $\as(a_{i_\theta})$ is in the same triangle as the arc segments associated to other letters in $L_\theta(i)$. Hence we are done.

For (2), by definition we know two letters $l_1$ and $l_2$ are comparable if they are in the same subset $L_\theta(i)$. Then by (1) this is equivalent to that $\as(l_1)$ and $\as(l_2)$ are in the same triangle and start at the same curve $i$. Note that the forms in Figure~\ref{fig:orders}~(i) correspond to $\alpha^{-1}>\beta$ for $\beta\alpha\in Z$, $\alpha^{-1}>a_{i_\theta}$, $a_{i_\theta}>\beta$, $\varepsilon^{-1}>\varepsilon$, $a_{i_\theta}>\varepsilon$ and $\varepsilon^{-1}>a_{i_\theta}$, respectively. These give all the possible cases for the order.

For (3), by definition $l$ is punctured if and only if $l$ is of the form $a_{i_\theta}$ or $a_{i_\theta}^{-1}$ such that $L_{i_\theta}=\{\varepsilon^{-1}>a_{i_\theta}>\varepsilon\}$. Then by Construction~\ref{cstr:as}, $\as(l)$ is in a self-folded triangle and hence it has to connect the puncture in this triangle.
\end{proof}

Recall that a word is a sequence $\m=\omega_m\cdots\omega_1$ of letters in $L$ such that for any $1\leq j\leq m-1$, $\omega_j^{-1}\in L_\theta(i)$ and $\omega_{j+1}\in L_{\theta'}(i)$ for different $\theta,\theta'\in\{\pm\}$ and some $i\in Q^\T_0$. By Lemma~\ref{lem:compprod}~(1), this condition is equivalent to that $\as(\omega_j^{-1})$ and $\as(\omega_{j+1})$ start at the same curve in $\T$, but they are in the two adjacent triangles to this curve respectively. Hence we can glue the two arc segments $\as(\omega_j)$ and $\as(\omega_{j+1})$ to get a curve segment.

\begin{construction}
For each word $\m=\omega_m\cdots\omega_1$, we glue the corresponding arc segments $\as(\omega_1)$, $\cdots$, $\as(\omega_m)$ in order to get a curve segment, denoted by $\as(\m)$.
\end{construction}

Recall that for two words $\m=\omega_m\cdots\omega_2\omega_1$ and $\r=\nu_r\cdots\nu_2\nu_1$, $\m>\r$ if and only if there is $j$ such that $\omega_j\cdots\omega_1=\nu_j\cdots\nu_1$ and $\omega_{j+1}>\nu_{j+1}$.

\begin{lemma}\label{lem:order}
Let $\m_1$, $\m_2$ be two words and $\gamma_1=\as(\m_1), \gamma_2=\as(\m_2)$. Then $\m_1\geq\m_2$ if and only if $\gamma_1$ and $\gamma_2$ separate as in one of the forms in Figure~\ref{fig:orders} after they share the same curve segments from the start. The equality holds if and only if one of the forms in Figure~\ref{fig:orders}~(ii) occurs, and in this case $\gamma_1\sim\gamma_2$.
\end{lemma}
\begin{proof}
By definition, $\m_1=\omega_m\cdots\omega_1>\m_2=\omega'_r\cdots\omega'_1$ if and only if there is a maximal integer $j\geq 0$ such that the first $j$ letters (from right to left) of $\m_1$ and $\m_2$ are the same pointwise and $\omega_{j+1}>\omega'_{j+1}$. By Lemma~\ref{lem:compprod}~(2), this is equivalent to that $\as(\m_1)$ and $\as(\m_2)$ share the first $j$ arc segments and their $(j+1)$-th arc segments have one of the forms in Figure~\ref{fig:orders}~(i). Clearly $\m_1=\m_2$ if and only if one of the forms in Figure~\ref{fig:orders}~(ii) occurs and hence $\gamma_1\sim\gamma_2$. Thus the lemma holds.
\end{proof}

Recall that a word $\m=\omega_m\cdots\omega_1$ is maximal if and only if $\omega_1=a_{i_\theta}^{-1}$ and $\omega_m=a_{j_{\theta'}}$ for some $i,j\in Q_0^\T$ and some $\theta,\theta'\in\{\pm\}$.

\begin{lemma}\label{lem:bi1}
The map $\m \mapsto \as(\m)$ is a bijection from the set of maximal words to the set of curves (up to homotopy) in $\surf$ that are not in $\T$. Moreover, $\as(\m^{-1})=\as(\m)^{-1}$.
\end{lemma}

\begin{proof}
Let $\m=\omega_m\cdots\omega_1$. Since $\m$ is maximal, we have $\omega_1=a_{i_\theta}^{-1}$ and $\omega_m=a_{j_{\theta'}}$ for some $i,j\in Q_0^\T$ and some $\theta,\theta'\in\{\pm\}$. By Construction~\ref{cstr:as}, the endpoints of $\as(\m)$ are in $\M\cup\P$. Because by Lemma~\ref{lem:compprod}~(1), among the arc segments $\as(\omega_1),\cdots,\as(\omega_m)$, any two adjacent arc segments are not in the same triangle, so that the curve $\as(\m)$ has minimal intersections with the curves in $\T^o$. In particular, the intersection number of $\as(\m)$ with $\T$ is not zero. Hence $\as(\m)$ is a curve in $\surf$ which is not in $\T$.

On the other hand, for a curve $\gamma$ in $\CS\setminus\T$, since we consider it up to homotopy, we can assume $\gamma$ has minimal intersections with the curves in $\T$. Take the product, denoted by $\m_\gamma$, of letters corresponding to the arc segments of $\gamma$ divided by its intersections with $\T^o$ in order. Then by Lemma~\ref{lem:compprod}~(1), $\m_\gamma$ is a word and clearly it is maximal. Moreover, the correspondence between arc segments and letters implies that $\m_{\as(\m)}=\m$ and $\as(\m_{\gamma})=\gamma$ up to homotopy. Therefore, $\m \mapsto \as(\m)$ is the required bijection with $\as(\m^{-1})=\as(\m)^{-1}$.
\end{proof}

Recall from Section~\ref{subsec:strings} that a maximal word $\m=\omega_m\cdots\omega_1$ is called admissible if the following hold.
\begin{itemize}
  \item[(A1)] For each $\omega_i={\varepsilon}$ with $\varepsilon$ a dashed loop, we have that ${\omega}_1^{-1}\cdots{\omega}_{i-1}^{-1}>\omega_m\cdots\omega_{i+1}$, and for each $\omega_i={\varepsilon}^{-1}$ with $\varepsilon$ a dashed loop, we have that ${\omega}_1^{-1}\cdots{\omega}_{i-1}^{-1}<\omega_m\cdots\omega_{i+1}$.
  \item[(A2)] If $\m$ contains two punctured letters then $F(\m)$ is not a proper power of $F(\m')$ for any maximal word $\m'$ containing two punctured letters, where \[F(\m):=F(\omega_m)\cdots F(\omega_1)F(\omega_2^{-1})\cdots F(\omega_{m-1}^{-1}).\]
\end{itemize}
On the other hand, recall from Definition~\ref{def:curve} that a pair $(\gamma,\kappa)$ is called a tagged curve if the following hold.
\begin{enumerate}
  \item[(T1)] The curve $\gamma$ does not cut out a once-punctured monogon by a self-intersection (including endpoints), cf. Figure~\ref{fig:monogon};
  \item[(T2)] If $\gamma(0),\gamma(1)\in\P$, then the completion $\bar{\gamma}$ is not a proper power of a closed curve in the sense of the multiplication in the fundamental group of $\surf$.
\end{enumerate}

\begin{lemma}\label{lem:AT}
Let $\m$ be a maximal word. Then $\m$ satisfies (A1) if and only if $\as(\m)$ satisfies (T1); $\m$ satisfies (A2) if and only if $\as(\m)$ satisfies (T2).
\end{lemma}

\begin{proof}
Let $\m=\omega_m\cdots\omega_1$. Note that the curve $\as(\m)$ does not satisfies (T1) if and only if it cuts out a once-punctured monogon as in Figure~\ref{fig:monogon}. This is equivalent to that,
there is an arc segment $\as(\omega_i)$ of $\as(\m)$ with $\omega_i=\varepsilon$ or $\varepsilon^{-1}$
(for some dashed loop $\varepsilon$) which
has the form as in Figure~\ref{fig:gg7} with the right part being one of the forms in Figure~\ref{fig:orders}.
Let $\gamma_1:=\as(\omega_m\cdots\omega_{i+1})$ and $\gamma_2:=\as(\omega_1^{-1}\cdots\omega_{i-1}^{-1})$.
If $\omega_i=\varepsilon$, then the orientation of $\gamma$ is as shown in Figure~\ref{fig:gg7}.
\begin{figure}[htpb]\centering
	\begin{tikzpicture}[scale=0.56]
	\draw(-6,0)node{$\bullet$};
	\draw[\RED,thick]plot[smooth,tension=1.5] coordinates
	{(-8,0) (-6,1) (-6,-1)  (-8,0)};
	\draw[\BLUE,thick,bend right=75,-<-=.5,>=stealth](-6,1)to(-6,-1);
	\draw[dashed] (8,0) circle (2)node{right part}
	(-3,1)node[above,\BLUE]{$\gamma_{1}$}(-3,-1)node[below,\BLUE]{$\gamma_{2}$};
	\draw[dashed] (5,2)to(-5,2) (5,-2)to(-5,-2);
	\draw[\RED, thick] (5,2)(5,-2) (-5,-2)(-5,2);
	\draw[\RED, thick] (4,2)(4,-2) (-4,-2)to(-4,2);
	\draw[\BLUE, thick,->-=.4,>=stealth] (-5,1).. controls +(0:3.9) and +(180:3.9) .. (5,-1);
	\draw[\BLUE, thick,-<-=.4,>=stealth] (-5,-1).. controls +(0:3.9) and +(180:3.9) .. (5,1);
	\draw(2,0)node{$\cdots$}(-2,0)node{$\cdots$};
	\draw(-8,0)node{$\bullet$};
	\draw[\BLUE](-7,0)node{$\gamma$};
	\end{tikzpicture}
	\caption{A once-punctured monogon from a curve}
	\label{fig:gg7}
\end{figure}
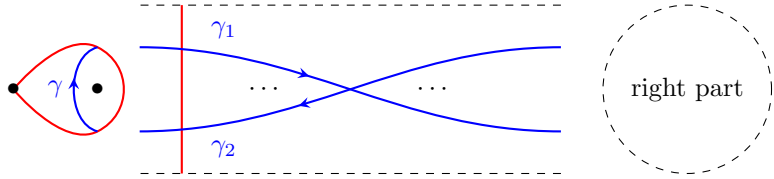

By Lemma~\ref{lem:order}, $\omega_m\cdots\omega_{i+1}\geq\omega_1^{-1}\cdots\omega_{i-1}^{-1}$. Similarly, if $\omega_i={\varepsilon}^{-1}$ then $\omega_m\cdots\omega_{i+1}\leq\omega_1^{-1}\cdots\omega_{i-1}^{-1}$. This implies that (T1) does not hold for $\as(\m)$ if and only if (A1) does not hold for $\m$.

Now consider the condition (A2). Note that by Lemma~\ref{lem:compprod}~(3) both of the endpoints of $\as(\m)$ are punctures if and only if $\m$ has two punctured letters $\omega_1$ and $\omega_m$. In this case, $F(\m)$ (as a cycle) corresponds to the completion of $\as(\m)$. Hence $\as(\m)$ satisfies (T2) if and only if $\m$ satisfies (A2).
\end{proof}

\def\U{\mathbf{C}_0(\surf)}

Let $\U$ be the subset of $\CS\setminus\T$ consisting of curves
satisfying the conditions (T1) and (T2). That is, $\U$ consists of curves $\gamma$ in $\CS\setminus\T$
such that there exists a tagged curve $(\gamma,\kappa)$ for some $\kappa$.

By Lemma~\ref{lem:bi1} and Lemma~\ref{lem:AT}, we have the following.

\begin{lemma}\label{cor:binew}
The map $\m\mapsto \as(\m)$ is a bijection
\begin{gather}\label{eq:as}
    \as\colon \bX  \longrightarrow \U,
\end{gather}
where $\bX$ is the set of admissible words up to inverse.
\end{lemma}

For each $\gamma\in\U$, we denote by $\m_\gamma$ the preimage of $\gamma$ under the bijection \eqref{eq:as}. Recall from Section~\ref{subsec:results} that the algebra $A_{\m_\gamma}$ is generated by the indeterminates $x$ associated to punctured letters in $\m_\gamma$ with $x^2=x$. So by Lemma~\ref{lem:compprod}~(3) indeterminates of $A_{\m_\gamma}$ are indexed by endpoints of $\gamma$ that are punctures.

\begin{construction}\label{cstr:N}
The 1-dimensional $A_{\m_\gamma}$-modules are classified in Section~\ref{subsec:results}. Using the notation there, each map $\kappa$ gives a 1-dimensional $A_{\m_\gamma}$-module $N(\gamma,\kappa)$ as follows.
\begin{enumerate}
  \item[i)] If neither of the endpoints of $\gamma$ is a puncture, then $A_{\m_\gamma}=\k$. Let $N_{(\gamma,\kappa)}=\k$.
  \item[ii)] If exactly one endpoint of $\gamma$ is a puncture, then $A_{\m_\gamma}=\k[x_a]/(x_a^2-x_a)$, where $a\in\{0,1\}$ with $\gamma(a)\in\P$. Let $N_{(\gamma,\kappa)}=\k_{\kappa(a)}$.
  \item[iii)] If both of the endpoints of $\gamma$ are punctures, then $A_{\m_{\gamma}}=\k\<x_0,x_1\>/(x_0^2-x_0,x_1^2-x_1)$. Let $N_{(\gamma,\kappa)}=\k_{\kappa(0),\kappa(1)}$.
\end{enumerate}
Thus, for each tagged curve $(\gamma,\kappa)\in\TC\setminus\T^\times$,
where $\T^\times$ is the tagged version of $\T$ (see Section~\ref{sec:QPtosurf}),
there is an associated indecomposable representation
\begin{gather}\label{eq:M^T}
    M^\T_{(\gamma,\kappa)}:=M^\T(\m_\gamma,N_{(\gamma,\kappa)})
\end{gather}
in $\rep(Q^\T,W^\T)$ by Construction~\ref{cstr:rep} and Theorem~\ref{thm:Deng}. For $(\gamma,\kappa)\in\T^\times$, let $M^\T_{(\gamma,\kappa)}=0$.
\end{construction}

\begin{definition}\label{def:X}
Define a map $X^\T$ from $\TC$ to the set $\so(\T)$ of string objects in $\C(\T)$ by
\[
X^\T_{(\gamma,\kappa)}=
\begin{cases}
M^\T_{(\gamma,\kappa)}&\text{if $(\gamma,\kappa)\in\TC\setminus\T^\times$,}\\
T_{(\gamma,\kappa)}&\text{if ${(\gamma,\kappa)}\in\T^\times$.}
\end{cases}
\]
\end{definition}

\begin{theorem}\label{thm:bi}
The map $X^\T$ is a bijection.
\end{theorem}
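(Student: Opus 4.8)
The plan is to prove bijectivity of $X^\T$ by decomposing the problem according to whether a tagged curve lies in $\T^\times$ or not. On the one hand, $X^\T$ restricted to $\T^\times$ is manifestly a bijection onto the set of indecomposable direct summands of the canonical cluster tilting object $T_\T$, by construction ($(\gamma,\kappa)\in\T^\times$ corresponds to the curve $\gamma'\in\T$ and hence to $T_{\gamma'}$, and distinct tagged curves in $\T^\times$ correspond to distinct curves in $\T$). On the other hand, by the equivalence \eqref{eq:eq2} and Definition~\ref{def:stringob}, the string objects not in $\add T_\T$ correspond exactly to the indecomposable representations $M^\T(\m,N)\in\mathfrak{M}^\T$, i.e. those with $\m\in\bX$ and $\dim_\k N=1$. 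So it suffices to show that $(\gamma,\kappa)\mapsto M^\T_{(\gamma,\kappa)}=M^\T(\m_\gamma,N_{(\gamma,\kappa)})$ is a bijection from $\TC\setminus\T^\times$ onto $\mathfrak{M}^\T$.

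The key input is Theorem~\ref{thm:Deng} (the classification of indecomposable representations of the skew-gentle algebra $\Lambda^\T$ via admissible words and modules over $A_{\bar\m}$), which tells us that $(\m,N)\mapsto M^\T(\m,N)$ is a bijection (up to the appropriate equivalences — inversion of words, isomorphism of modules) from pairs with $\m\in\bX$ onto indecomposable representations in $\rep(Q^\T,W^\T)$. Restricting to $\dim_\k N=1$ we get that $M^\T(\m,N)\mapsto(\m,N)$ is a bijection onto the set of such pairs. Thus the remaining task is to show that
\[
\TC\setminus\T^\times \longrightarrow \{(\m,N):\m\in\bX,\ \dim_\k N=1\},\qquad (\gamma,\kappa)\mapsto(\m_\gamma,N_{(\gamma,\kappa)})
\]
is a bijection. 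For the word component this is precisely the content of the combined Lemma~\ref{lem:bi1}, Lemma~\ref{lem:AT} and the bijection \eqref{eq:as}: $\gamma\mapsto\m_\gamma$ is a bijection between $\U$ (curves admitting a tagging) and $\bX$. It remains to handle the $\kappa$-component fiberwise: for a fixed $\gamma\in\U$, the maps $\kappa$ satisfying (T1) — modulo the identifications built into the equivalence relation $\simeq$ on $\TC$ — should correspond bijectively to the $1$-dimensional $A_{\m_\gamma}$-modules, modulo isomorphism, via $\kappa\mapsto N_{(\gamma,\kappa)}$. This is a direct check using the explicit description in cases (1)–(3): when $\gamma$ has $r\in\{0,1,2\}$ punctured endpoints, $A_{\m_\gamma}$ has $2^r$ one-dimensional modules, matched by the $2^r$ choices of $\kappa$ on the punctured endpoints (with $\kappa\equiv 0$ forced on marked-point endpoints by (T1)); one must also verify compatibility with inversion $(\gamma,\kappa)\mapsto(\gamma^{-1},\kappa^{-1})$ and $\as(\m^{-1})=\as(\m)^{-1}$, so that passing to $\simeq$-classes on both sides matches.

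The main obstacle I expect is bookkeeping at the boundary between $\T^\times$ and $\TC\setminus\T^\times$, and more subtly the subset conditions: one must be careful that $\U$ really is the set of curves $\gamma\in\CS\setminus\T$ for which \emph{some} tagging exists (i.e. (T2), (T3), (T4) hold), and that the excluded curves on the representation side — e.g. those whose corresponding word is not admissible, or which give rise to self-folded-triangle loops already accounted for in $T_\T$ — match up exactly, with no double counting and no omission. A related delicate point is the case of a curve $\gamma$ both of whose endpoints are punctures and for which $\gamma\sim\gamma^{-1}$ in a twisted sense (the situation of Figure~\ref{fig:exmint}), where the $\simeq$-identification on $\TC$ and the word-inversion identification interact nontrivially; one should check Theorem~\ref{thm:Deng}'s equivalence relation collapses the corresponding pairs $(\m_\gamma, N)$ in exactly the same way. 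Once these matching-of-indexing-sets issues are settled, bijectivity of $X^\T$ follows formally by assembling the two pieces.
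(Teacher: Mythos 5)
Your proposal is correct and follows essentially the same route as the paper: the proof in the text also reduces to showing that $(\gamma,\kappa)\mapsto M^\T(\m_\gamma,N_{(\gamma,\kappa)})$ is a bijection from $\TC\setminus\T^\times$ onto $\mathfrak{M}^\T$, and deduces this from the bijection \eqref{eq:as} together with the classification of $1$-dimensional $A_{\m_\gamma}$-modules, with Theorem~\ref{thm:Deng} supplying the injectivity of $(\m,N)\mapsto M^\T(\m,N)$ exactly as you use it. The extra bookkeeping you flag (taggings versus modules fiberwise, compatibility with inversion) is precisely the ``direct check'' the paper leaves implicit, so there is no gap.
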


\begin{proof}
It is sufficient to prove that $X^\T$ is a bijection from the set $\TC\setminus\T^\times$ to the set of indecomposable representations $M^\T(\m,N)$ with $\m\in\bX$ and $\dim_\k N=1$. This follows from the bijection \eqref{eq:as} in Lemma~\ref{cor:binew} and
the description of the 1-dimensional $A_{\m_\gamma}$-modules in Construction~\ref{cstr:N}.
\end{proof}

\subsection{Flips and mutations}\label{sec:4}
We study $\lozenge$-flips of an admissible triangulation $\T$ in this subsection.
Recall that the function $\pi_\T$ on $\T$ is defined as follows: if $\gamma$ is the folded side of a self-folded triangle in $\T$,
then $\pi_\T(\gamma)$ is the corresponding remaining side;
otherwise, $\pi_\T(\gamma)=\gamma$.

\begin{definition}\cite[Definition~9.11]{FST}
Let $\T$ be an admissible triangulation of $\surf$. The $\lozenge$-flip $f_i(\T)$ associated to a curve $i\in\T^o$ is the unique admissible triangulation that shares all curves with $\T$ except for the curves $j$ satisfying $\pi_\T(j)=i$.
\end{definition}

Note that there are two types of $\lozenge$-flips: when $i$ is not a side of a self-folded triangle, the corresponding $\lozenge$-flip is an ordinary flip,
and when $i$ is the remaining side of a self-folded triangle, the corresponding $\lozenge$-flip is a combination of two ordinary flips occurring inside a once-punctured digon (see Figure~\ref{fig:lozenge}). Recall that an ordinary flip of a triangulation $\T'$ is a new triangulation $\T''$ which shares all curves with $\T'$ except for one.

The $\lozenge$-flips of an admissible triangulation $\T$ are indexed by the vertices of the quiver $Q^\T$. Define the mutation $(Q',W')=\mu_i(Q^\T,W^\T)$ at a vertex $i\in Q^\T_0$ to be $(Q'_0,Q'_1,W')=\mu_i(Q_0,Q_1,W)$ in the sense of \cite{DWZ} with $Q'_2=Q_2$. By \cite{FST,LF}, \[\mu_i(Q^\T,W^\T)\simeq(Q^{f_i(\T)},W^{f_i(\T)})\]
for each $i\in\T^o=Q^\T_0$, where $f_i$ is the $\lozenge$-flip associated to $i$. Then by \cite{KY}, there is an equivalence $\widetilde{\mu}_i:\C(\T)\simeq\C(\T')$.

For each tagged curve $(\gamma,\kappa)$, the corresponding object $X^\T_{(\gamma,\kappa)}$ in $\C(\T)$ is given by the associated representation $M^\T_{(\gamma,\kappa)}$ in $\rep(Q^\T,W^\T)$ through the equivalence $F_\T:\C(\T)/(T_\T)\to \rep(Q^\T,W^\T)$. Then the equivalence $\widetilde{\mu}_i$ should be compatible with some mutation of representations. However, there is not a well-defined mutation on $\rep(Q^\T,W^\T)$. Instead, we shall use decorated representations and their mutations introduced in \cite{DWZ}. A decorated representation of $(Q^\T,W^\T)$ is a pair $(M,V)$, where $M\in\rep(Q^\T,W^\T)$ is a usual representation of $(Q^\T,W^\T)$ and $V$ is a representation of $(Q_0^\T,Q_2^\T)$ bounded by $\{\varepsilon^2-\varepsilon\mid \varepsilon\in Q_2^\T\}$.

\begin{construction}\label{cstr:dec}
Let $(\gamma,\kappa)$ be a tagged curve in $\TC$.
If $(\gamma,\kappa)\notin\T^\times$, define $V^\T_{(\gamma,\kappa)}=0$;
if $(\gamma,\kappa)\in\T^\times$, define $V^\T_{(\gamma,\kappa)}$ by
\begin{itemize}
  \item $V_j=0$ and $V_{\varepsilon_j}=0$, for $j\neq\pi_\T(\gamma)$,
  \item $V_{\pi_\T(\gamma)}=\k$;
  \item $V_{\varepsilon_{\pi_\T(\gamma)}}=1-\kappa(a)$ if there exists a (unique) $a$ with $\gamma(a)\in\P$.
\end{itemize}
This construction, together with \eqref{eq:M^T}, gives a decorated representation
$(M^\T_{(\gamma,\kappa)},V^\T_{(\gamma,\kappa)})$ associated to each tagged curve $(\gamma,\kappa)$. \end{construction}

Let $(\gamma,\kappa)$ be a tagged curve in $\TC$ and $(M^\T_{(\gamma,\kappa)},V^\T_{(\gamma,\kappa)})$ the corresponding decorated representation. Construct $\mu_i(M^\T_{(\gamma,\kappa)},V^\T_{(\gamma,\kappa)})$ as in Appendix~~\ref{app:DWZ}.
Now we prove that the map $X^\T$ from tagged curves to string objects is independent of the choice of the admissible triangulation $\T$.

\begin{theorem}\label{thm:ind}
For any two admissible triangulations $\T$ and $\T'$, there is an equivalence $\Theta\colon\C(\T)\simeq\C(\T')$ such that $\Theta\left(X^\T_{(\gamma,\kappa)}\right)\cong X^{\T'}_{(\gamma,\kappa)}$, for every tagged curve $(\gamma,\kappa)\in\TC$.
\end{theorem}

\begin{proof}
By Lemma~\ref{lem:loz-conn}, any two admissible triangulations are connected by a sequence of $\lozenge$-flips. Then using induction,
it is sufficient to consider the case of $\T'=f_i(\T)$ a $\lozenge$-flip of $\T$.
Recall that there is an equivalence $\widetilde{\mu}_i:\C(\T)\simeq\C(\T')$.

We claim that for any tagged curve $(\gamma,\kappa)\in\TC$,
\begin{equation}\label{eq:mu1}
\mu_i(M^\T_{(\gamma,\kappa)},V^\T_{(\gamma,\kappa)})\cong(M^{\T'}_{(\gamma,\kappa)},V^{\T'}_{(\gamma,\kappa)})
\end{equation}
Indeed, since $M^\T_{(\gamma,\kappa)}$ and $V^\T_{(\gamma,\kappa)}$ are constructed locally, we only need to prove that for each segment of $\gamma$ crossing $i$, the corresponding decorated representations $(M,V)$ of $(Q^\T,W^\T)$ and $(M',V')$ of $(Q^{f_i(\T)}, W^{f_i(\T)})$ satisfy $\mu_i(M,V)\cong (M',V')$. We list all the possible cases in Table~\ref{table1} and Table~\ref{table2} in Appendix~\ref{app:DWZ} for the first and second types of $\lozenge$-flips, respectively, up to symmetry. And in the same row of each case, we list the corresponding decorated representations, using Construction~\ref{cstr:rep} and Construction~\ref{cstr:dec}. Then one can check \eqref{eq:mu1} on a case by case basis.

Let $F^\T$ denote the equivalence \eqref{eq:eq2}. Consider the map $\Phi_\T$ from the set of (isoclasses of) objects in $\C(\T)$ to the set of (isoclasses of) decorated representations of $(Q^\T,W^\T)$ defined as follows. For any indecomposable object $X\in\C(\T)$ which is not isomorphic to a direct summand of $T_\T$, define $\Phi_\T(X)\cong(F_\T(X),0)$; for any tagged curve $(\gamma,\kappa)\in\T^\times$, define $\Phi_\T(T_{(\gamma,\kappa)})=(0,V^\T_{(\gamma,\kappa)})$. By definition, for any tagged curve $(\gamma,\kappa)\in\TC$, $\Phi_\T(X^\T_{(\gamma,\kappa)})\cong (M^\T_{(\gamma,\kappa)},V^\T_{(\gamma,\kappa)})$. Hence by \eqref{eq:mu1}, we have
\[\mu_i\left(\Phi_\T(X^\T_{(\gamma,\kappa)})\right)=\Phi_{\T'}(X^{\T'}_{(\gamma,\kappa)}).\]
By \cite[Proposition~4.1]{P}, $\Phi_\T$ is a bijection and $\mu_i\Phi_\T(X)\simeq\Phi_{\T'}\widetilde{\mu}_i(X)$ for any indecomposable object $X$ in $\C(\T)$. Hence $\widetilde{\mu}_i\left(X^\T_{(\gamma,\kappa)}\right)\cong X^{\T'}_{(\gamma,\kappa)}$, as required.
\end{proof}

\section{Homological interpretation of marked surfaces}\label{sec:5}
\subsection{AR-translation and AR-triangles}
Note that we have chosen an admissible triangulation $\T$ and have a bijection $X^\T:\TC\rightarrow\so(\T)$ from the set of tagged curves to the set of string objects in the cluster category $\C(\T)$.
The tagged rotation (cf. Definition~\ref{def:rotation} and Figure~\ref{fig:rotate})
$\rho$ is a permutation on $\TC$ while the shift functor $[1]$ in $\hua{C}(\T)$ gives a permutation on the set $\so(\T)$. We will give a straightforward proof of Theorem~\ref{thm:BQ},
with a slight generalization to tagged curves.

For a curve $\gamma$ in $\CS$ with $\gamma(1)\in\M$, denote by $\bt\gamma$ the curve obtained from $\gamma$ by moving $\gamma(1)$ along the boundary anticlockwise to the next marked point; dually, for a curve $\gamma$ in $\CS$ with $\gamma(0)\in\M$, denote by $\gamma\bt$ the curve obtained from $\gamma$ by moving $\gamma(0)$ along the boundary anticlockwise to the next marked point. We first show the following lemma, where $\m_\gamma$ is the word associated to $\gamma$ defined by the bijection \eqref{eq:as} and $\bt\m$ and $\m\bt$ are defined in Section~\ref{subsec:order}. Recall that the set $\U$ consists of curves $\gamma$ in $\CS\setminus\T$
such that there exists a tagged curve $(\gamma,\kappa)$ for some $\kappa$.

\begin{lemma}\label{lem:+}
If $\gamma$ is a curve in $\U$ with $\gamma(1)\in\M$ such that $\bt\gamma$ is in $\U$, then $\bt\m_\gamma$ exists and $\bt\m_\gamma=\m_{\bt\gamma}$. Dually, if $\gamma$ is a curve in $\U$ with $\gamma(0)\in\M$ such that $\gamma\bt$ is in $\U$, then $\m_\gamma\bt$ exists and $\m_\gamma\bt=\m_{\gamma\bt}$.
\end{lemma}

\begin{proof}
We only prove the first assertion.
By construction, $\gamma$ and $\bt\gamma$ start at the same point, go through the same way at the beginning and then separate in a triangle as one of the following two forms, where $\delta$ is the boundary segment from $\gamma(1)$ to $\bt\gamma(1)$ anticlockwise.
\[\begin{tikzpicture}[yscale=.5,rotate=180]
\draw (5,0)node{$\bullet$} (.2,1.5)node{$\bullet$};
\draw[\RED,thick,bend right=15] (0,0)to (2,3) to (2,-3) to(0,0);
\draw[thick](-.3,.7)node[right]{$\delta$};
\draw[ultra thick,bend right=-15] (0,0) to (.2,1.5);
\draw (0,0)node{$\bullet$} (2,3)node{$\bullet$} (2,-3)node{$\bullet$};
\draw[\BLUE,->-=.4,>=stealth,thick,bend left=15](5,0)to(0,0);
\draw[\BLUE,->-=.4,>=stealth,thick,bend left=15](5,0)to(.2,1.5);
\draw[\BLUE,thick](2.3,1.6)node[above]{$\gamma$};
\draw[\BLUE,thick](2.3,-1.5)node[below]{$\bt\gamma$};
\end{tikzpicture}\qquad
\begin{tikzpicture}[yscale=.5,rotate=180]
\draw (5,0)node{$\bullet$} (.2,-1.5)node{$\bullet$};
\draw[ultra thick,bend left=-15] (0,0) to (.2,-1.5);
\draw[thick](-.3,-.7)node[right]{$\delta$};
\draw[\RED,thick,bend right=15] (0,0)to (2,3) to (2,-3) to(0,0);
\draw (0,0)node{$\bullet$} (2,3)node{$\bullet$} (2,-3)node{$\bullet$};
\draw[\BLUE,->-=.4,>=stealth,thick,bend right=15](5,0)to(0,0);
\draw[\BLUE,->-=.4,>=stealth,thick,bend right=15](5,0)to(.2,-1.5);
\draw[\BLUE,thick](2.3,1.4)node[above]{$\gamma$};
\draw[\BLUE,thick](2.3,-1.6)node[below]{$\bt\gamma$};
\end{tikzpicture}\]
By Lemma~\ref{lem:order}, $\m_\gamma>\m_{\bt\gamma}$.
Moreover, since by construction $\gamma$, $\bt\gamma$ and $\delta$ enclose a contractible triangle in the surface (i.e. a triangle which is homotopic to a point), by Lemma~\ref{lem:order} again there is no curve $\gamma'\in\U$ starting at $\gamma(0)=\bt\gamma(0)$ such that $\m_\gamma>\m_{\gamma'}>\m_{\bt\gamma}$.
Therefore, the bijection \eqref{eq:as} between curves in $\U$ and words in $\bX$
implies that $\m_{\bt\gamma}$ is the successor of $\m_\gamma$, i.e. $\bt\m_\gamma=\m_{\bt\gamma}$.
\end{proof}

\begin{theorem}\label{thm:T-rotation}
Under the bijection $X^\T:\TC\rightarrow\so(\T)$, the tagged rotation $\rho$ on $\TC$ becomes the shift $[1]$ on the set $\so(\T)$, i.e. we have the following commutative diagram
\[\xymatrix{
\TC\ar[r]^{X^\T}\ar[d]^{\rho}&\so(\T)\ar[d]^{[1]}\\ \TC\ar[r]^{X^\T}&\so(\T)
}.\]
In particular, restricting to $\TA$, we get Theorem~\ref{thm:BQ}.
\end{theorem}
\begin{proof}
Let $(\gamma,\kappa)\in\TC$ such that neither $(\gamma,\kappa)$ nor $\rho(\gamma,\kappa)$ is in $\T^\times$.
So both $\gamma$ and $\rho(\gamma)$ are in $\U$.
Using Theorem~\ref{thm:G}, we show that $\tau M^\T_{(\gamma,\kappa)}=M^\T_{\rho(\gamma,\kappa)}$,
where there are three cases:
\begin{itemize}
  \item[(1)] If both $\gamma(0)$ and $\gamma(1)$ are in $\M$, then
  at least one of $\bt\gamma$ and $\gamma\bt$ is in $\U$.
  This is because when they are both in $\T$, one of
  $(\gamma,\kappa)$ and $\rho(\gamma,\kappa)$ is forced to be in $\T$, which contradicts our assumption.
  Then we deduce that $\m_{\rho(\gamma)}=\bt\m_\gamma\bt$ by Lemma~\ref{lem:+}.
  On the other hand, $\m_\gamma$ contains no punctured letters by Lemma~\ref{lem:compprod}~(3).
  Therefore, $$\tau M^\T_{(\gamma,\kappa)}=M^\T(\bt\m_\gamma\bt,\k)=M^\T_{\rho(\gamma,\kappa)},$$ where $\kappa=\emptyset$.
  \item[(2)] If exactly one of $\gamma(1)$ and $\gamma(0)$ is a puncture,
  assume that $\gamma(0)\in\P$ and $\gamma(1)\in\M$ without loss of generality. Then $\rho(\gamma)=\bt\gamma$, $\m_{\rho(\gamma)}=\bt\m_\gamma$ and
      $$\tau M^\T_{(\gamma,\kappa)}=M^\T(\bt\m_\gamma,\k_{1-\kappa(0)})=M^\T_{\rho(\gamma,\kappa)}.$$
  Note that the part $\k_{1-\kappa(0)}$ is constructed in Construction~\ref{cstr:N}, which is determined by the tagging.
  \item[(3)] If both $\gamma(0)$ and $\gamma(1)$ are in $\P$, then $\rho(\gamma)=\gamma$ and
  $$\tau M^\T_{(\gamma,\kappa)}=M^\T(\m_\gamma,\k_{1-\kappa(0),1-\kappa(1)})=M^\T_{\rho(\gamma,\kappa)}.$$
\end{itemize}
By \cite[Lemma in \S 3.5]{KR},
the shift $[1]$ in the triangulated category $\C(\T)$ gives the AR-translation $\tau$ in $\rep(Q^\T,W^\T)$. Then we have
\begin{gather}\label{eq:TAU-}
    X^{\T}_{(\gamma,\kappa)}[1]=X^{\T}_{\rho(\gamma,\kappa)},
\end{gather}
for $(\gamma,\kappa)\notin\T^\times\cup\rho^{-1}(\T^\times)$.
Furthermore, $M^\T_{(\gamma,\kappa)}$ is a projective representation for $(\gamma,\kappa)\in\rho^{-1}(\T^\times)$ and $P_{(\gamma,\kappa)}[1]=T_{(\gamma,\kappa)}$ for any $(\gamma,\kappa)\in\T^\times$, where $P_{(\gamma,\kappa)}$ is the projective representation associated to the primitive idempotent indexed by $(\gamma,\kappa)$. In particular,
\[\{X^\T_{(\gamma,\kappa)}[1]\mid(\gamma,\kappa)\in\rho^{-1}(\T^\times)\}
    =\{X^\T_{(\gamma,\kappa)}\mid(\gamma,\kappa)\in\T^\times\}.\]

To finish the proof, we only need to show that
$X_{\rho^{\pm1}(\gamma,\kappa)}=T_{(\gamma,\kappa)}[\pm1]$ for $(\gamma,\kappa)\in\T^\times$. There are two cases and we only deal $\rho^{-1}(\gamma,\kappa)$. Note that $\rho(\gamma,\kappa)$ intersects $(\gamma,\kappa)$ and the following figure shows the local situation near this intersection.
\begin{figure}[htpb]\centering
\begin{tikzpicture}[xscale=.4,yscale=.3]
\draw[\BLUE,thick](70:4.5)to(-110:5);
\draw(-1,0)node{};
\draw[ultra thick]plot [smooth,tension=1] coordinates {(-120:6) (-95:4.5) (-60:6)};
\draw[ultra thick]plot [smooth,tension=1] coordinates {(110:5) (90:4) (60:5.5)};
\draw[\RED,thick](-95:4.5)to[bend left](10:4)to[bend left](85:4)to(-95:4.5)
to[bend right](170:4)to[bend right](85:4);
\draw[<-,>=stealth,bend left](.9,2.65)to(0.25,2.5);
\draw[<-,>=stealth,bend left](-1,-2)to(-.25,-2);
\draw[thick](-95:4.5)node{$\bullet$}(85:4)node{$\bullet$}(70:4.5)node{$\bullet$} (-110:5)node{$\bullet$}
    (10:4)node{$\bullet$}(170:4)node{$\bullet$};
\end{tikzpicture}
\qquad
\begin{tikzpicture}[xscale=.4,yscale=.3]
\draw[\BLUE,thick](0,-1)to(-110:5);
\draw(0,6)node{};
\draw[ultra thick]plot [smooth,tension=1] coordinates {(-120:5.5) (-95:4.5) (-60:5.5)};
\draw[ultra thick]plot [smooth,tension=1] coordinates {(120:5) (85:4) (60:5)};
\draw[\RED,thick](-95:4.5)to[bend left=60](85:4);
\draw[\RED,thick](-95:4.5)to[bend right=60](85:4);
\draw[\RED, thick](-95:4.5)
.. controls +(60:6) and +(100:7) .. (-95:4.5);
\draw[\RED, thick] (0,-1)to(-95:4.5);
\draw[<-,>=stealth,bend left](-1.3,-3.5)to(-.6,-3.2);
\draw[thick](-95:4.5)node{$\bullet$}(85:4)node{$\bullet$}(0,-1)node{$\bullet$}(-110:4.9)node{$\bullet$};
\end{tikzpicture}\end{figure}

This enables us to deduce that
$M^\T_{\rho^{-1}(\gamma,\kappa)}$ is not any projective other than $P_{(\gamma,\kappa)}$.
So
$M^\T_{\rho^{-1}(\gamma,\kappa)}=P_{(\gamma,\kappa)}$ and
$X_{\rho^{-1}(\gamma,\kappa)}[1]=T_{(\gamma,\kappa)}$.
\end{proof}

\begin{remark}\label{rem:AR}
Using the description of AR-sequences in \cite[Section~5.4]{G}
and their relation with AR-triangles in \cite[Proposition~4.7]{KZ},
we can describe the AR-triangle ending at $X^\T_{(\gamma,\kappa)}$ where $(\gamma,\kappa)$ is a tagged curve in $\TC$ that does not connect two punctures.
In the case when $(\gamma,\kappa)$ does connect two punctures, the middle term
of the AR-triangle is not a string object (and hence we do not have a description).

In the following, let $\bar{\delta}$ be the completion of a curve $\delta$ which connects a marked point in $\M$ and a puncture in $\P$ (see the top picture of Figure~\ref{fig:completion} for the induced orientation). Let $X=X^\T$ and
$$X_{(\bar{\delta},\emptyset)}:=X_{(\delta,\emptyset)}\oplus X_{(\delta,\kappa')},$$
where $\kappa'(t)=1$ for $t$ with $\delta(t)\in\P$.
\begin{itemize}
  \item If both $\gamma(0)$ and $\gamma(1)$ are in $\M$ (which implies $\kappa=\emptyset$), the AR-triangle ending at $X_{(\gamma,\emptyset)}$ is
\begin{gather*}
    X_{\rho(\gamma,\emptyset)}\rightarrow X_{({}\gamma\bt,\emptyset)}\oplus X_{(\bt\gamma,\emptyset)}\rightarrow X_{(\gamma,\emptyset)} \rightarrow.
\end{gather*}
  \item If exactly one of $\gamma(0)$ and $\gamma(1)$ is in $\P$, the AR-triangle ending at $X_{(\gamma,\kappa)}$ is
\begin{gather*}
X_{\rho(\gamma,\kappa)}\rightarrow X_{(\bt\bar{\gamma},\emptyset)}\rightarrow X_{(\gamma,\kappa)} \rightarrow .
\end{gather*}
\end{itemize}
\end{remark}

\subsection{Cutting and Calabi-Yau reductions}\label{sec:cut}

Given a tagged curve $(\gamma,\kappa)\in\TC$ without self-intersections (i.e. $\Int((\gamma,\kappa),(\gamma,\kappa))=0$),
let $\surf/(\gamma,\kappa)$ be the marked surface obtained from $\surf$ by cutting along $\gamma$. More precisely, there are five cases listed below.
\begin{itemize}
\item[(1)]
If $\gamma$ connects two different marked points $M_1, M_2\in\M$,
then the resulting surface is shown as in the first column of Figure~\ref{fig:5c}.
\item[(2)]
If $\gamma$ is a loop based on a marked point $M\in\M$,
then the resulting surface is shown as in the second column of Figure~\ref{fig:5c}.
\item[(3)]
If $\gamma$ connects a marked point $M\in\M$ and a puncture $P\in\P$,
then the resulting surface is shown as in the third column of Figure~\ref{fig:5c}.
\item[(4)]
If $\gamma$ connects two different punctures $P_1,P_2\in\P$,
then the resulting surface is shown as in the fourth column of Figure~\ref{fig:5c}.
\item[(5)]
If $\gamma$ is a loop based on a puncture $P\in\P$,
then the resulting surface is shown as in the fifth column of Figure~\ref{fig:5c}.
\end{itemize}
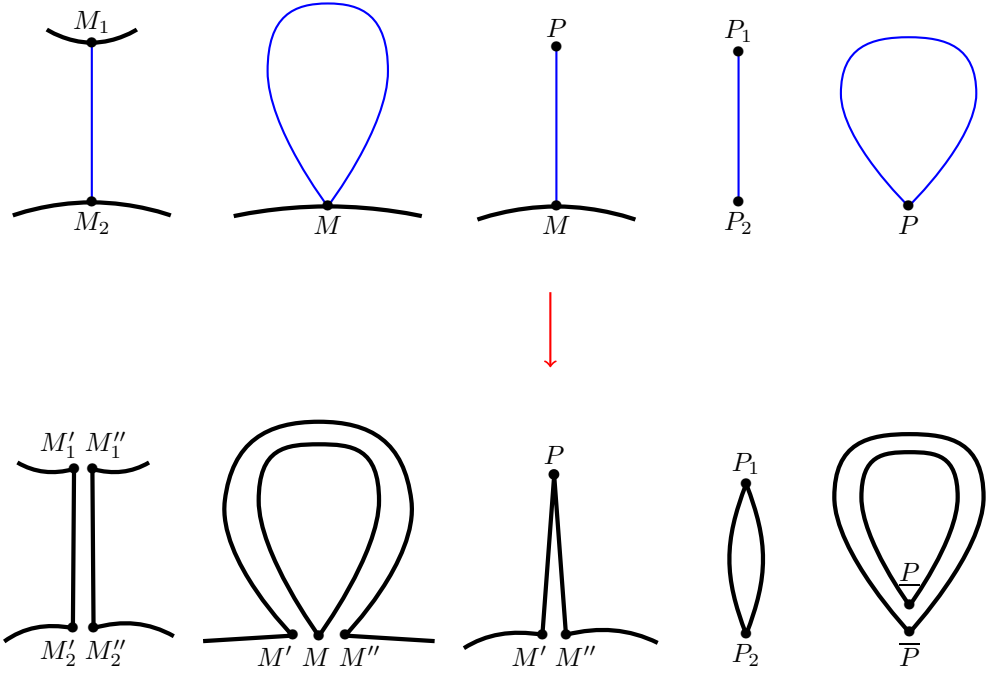
\begin{figure}[htpb]\centering
	\begin{tikzpicture}[xscale=.35,yscale=.25]
	\draw[ultra thick]plot [smooth,tension=1] coordinates {(-120:6) (-90:4.5) (-60:6)};
	\draw[ultra thick]plot [smooth,tension=1] coordinates {(110:5) (90:4) (70:5)};
	\draw[\BLUE,thick](90:4)to(-90:4.5);
	\draw[thick](90:4)node[above]{$M_1$}node{$\bullet$} (-90:4.5)node[below]{$M_2$}node{$\bullet$};
	\end{tikzpicture}
	\quad\;\;
	\begin{tikzpicture}[xscale=1.6,yscale=.3,rotate=90]
	\draw[ultra thick]plot [smooth,tension=1] coordinates {(170:4.5) (180:4) (190:4.5)};
	\draw[\BLUE,thick]plot [smooth,tension=1] coordinates {(-4,0) (2,.5) (5,0) (2,-.5) (-4,0)};
	\draw[thick](180:4)node{$\bullet$}node[below]{$M$};
	\end{tikzpicture}
	\quad\;
	\begin{tikzpicture}[xscale=.35,yscale=.25]
	\draw(-1,0)node{};
	\draw[\BLUE,thick](90:4)to(-90:4.5);
	\draw[thick](90:4)node[above]{$P$}node{$\bullet$};
	\draw[ultra thick]plot [smooth,tension=1] coordinates {(-120:6) (-90:4.5) (-60:6)};
	\draw[thick](-90:4.5)node{$\bullet$}node[below]{$M$};
	\end{tikzpicture}
	\quad\quad\;
	\begin{tikzpicture}[xscale=1,yscale=.25]
	\draw[\BLUE,thick](0,4)to(0,-4);
	\draw[thick](0,4)node{$\bullet$}node[above]{$P_1$}(0,-4)node{$\bullet$}node[below]{$P_2$};
	\end{tikzpicture}
	\qquad\;
	\begin{tikzpicture}[xscale=1.8,yscale=.25,rotate=90]
	\draw[\BLUE,thick]plot [smooth,tension=1] coordinates {(-4,0) (2,.5) (5,0) (2,-.5) (-4,0)};
	\draw[thick](180:4)node{$\bullet$}node[below]{$P$};
	\end{tikzpicture}\quad
	
	\begin{tikzpicture}[]
	\draw[thick,\RED](0,0)edge[->=stealth](0,-1) (6,-1.5)node{}(-7.5,.5)node{};
	\end{tikzpicture}
	
	\begin{tikzpicture}[xscale=.35,yscale=.25]
	\draw[ultra thick](-120:6)edge[bend left](-95:4.5);
	\draw[ultra thick](-85:4.5)edge[bend left](-55:6);
	\draw[ultra thick](120:5)edge[bend right](95:4);
	\draw[ultra thick](85:4)edge[bend right](60:5);
	\draw[ultra thick](90:4)node[above]{$M_1'\;M_1''$}(85:4)node{$\bullet$}to(-85:4.5)node{$\bullet$}
	(95:4)node{$\bullet$}to(-95:4.5)node{$\bullet$}
	(-90:4.5)node[below]{$M_2'\;M_2''$};
	\end{tikzpicture}
	\;
	\begin{tikzpicture}[xscale=1,yscale=.3,rotate=90]
	\draw[ultra thick]plot [smooth,tension=1] coordinates {(160:4.5) (175:4)};
	\draw[ultra thick]plot [smooth,tension=1] coordinates {(185:4) (200:4.5)};
	\draw[ultra thick]plot [smooth,tension=1] coordinates {(175:4) (2,1.25) (5.5,0) (2,-1.23) (185:4)};
	\draw[ultra thick]plot [smooth,tension=1] coordinates {(180:4) (2,.8) (4.5,0) (2,-.8) (180:4)};
	\draw[thick](175:4)node{$\bullet$}(185:4)node{$\bullet$}
	(180:4)node{$\bullet$}node[below]{$M'$ ${M}$ $M''$};
	\end{tikzpicture}
	\;
	\begin{tikzpicture}[xscale=.4,yscale=.25]
	\draw(-1,0)node{};
	\draw[ultra thick](-120:6)edge[bend left](-95:4.5);
	\draw[ultra thick](-85:4.5)edge[bend left](-55:6);
	\draw[ultra thick](90:4)node[above]{$P$}node{$\bullet$}(-85:4.5)node{$\bullet$}to
	(90:4)node{$\bullet$}to(-95:4.5)node{$\bullet$}
	(-90:4.5)node[below]{$M'\;M''$};
	\end{tikzpicture}
	\qquad
	\begin{tikzpicture}[xscale=.55,yscale=.25]
	\draw[ultra thick](0,4)edge[bend left=10](0,-4)
	edge[bend right=10](0,-4);
	\draw(0,4)node[above]{$P_1$}node{$\bullet$}(0,-4)node[below]{$P_2$}node{$\bullet$};
	\end{tikzpicture}
	\qquad
	\begin{tikzpicture}[xscale=.8,yscale=.24,rotate=90]
	\draw[ultra thick]plot [smooth,tension=1]
	coordinates {(180:5.5) (2,1.25) (5.5,0) (2,-1.23) (180:5.5)};
	\draw[ultra thick]plot [smooth,tension=1] coordinates {(180:4) (2,.8) (4.5,0) (2,-.8) (180:4)};
	\draw[thick](180:5.5)node{$\bullet$}node[below]{$\overline{P}$}
	(180:4)node{$\bullet$}(180:3.5)node[above]{$\underline{P}$} (6,0)node{};
	\end{tikzpicture}
	\caption{Cutting: five cases}
	\label{fig:5c}
\end{figure}

There is a canonical bijection between the tagged curves in $\surf$ that do not intersect $(\gamma,\kappa)$ and the tagged curves in $\surf/(\gamma,\kappa)$. This bijection is straightforward to see for the cases (1), (2) and (5), while there are several non-obvious correspondence between the tagged curves in the cases (3) and (4), which have been shown in Figure~\ref{fig:corres} (up to tagging). It is easy to check that this bijection preserves the intersection numbers.

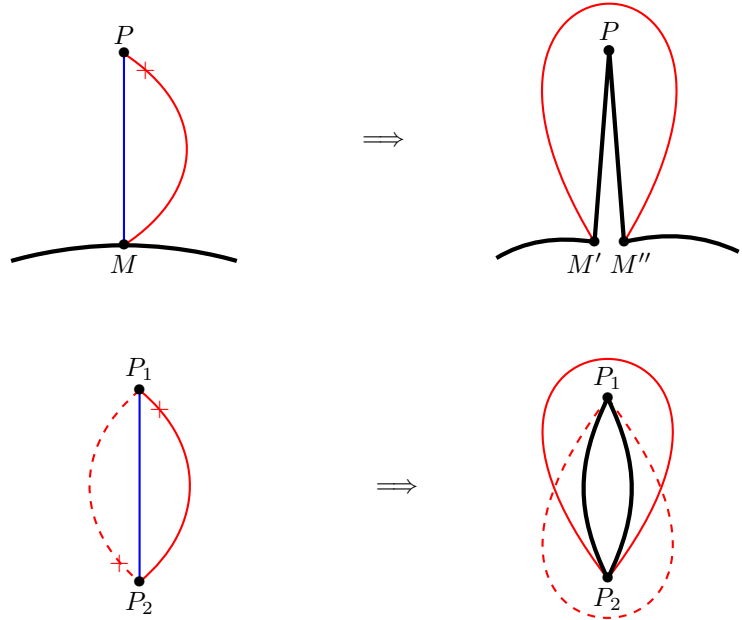
\begin{figure}[htpb]\centering
\qquad
\begin{tikzpicture}[xscale=.5,yscale=.3]
\draw[\RED,thick](90:4)to[bend left=42.5](-90:4.5)
(80:3.3)node{+};
\draw(-1,0)node{};
\draw[\BLUE,thick](90:4)to(-90:4.5);
\draw[thick](90:4)node[above]{$P$}node{$\bullet$};
\draw[ultra thick]plot [smooth,tension=1] coordinates {(-120:6) (-90:4.5) (-60:6)};
\draw[thick](-90:4.5)node[]{$\bullet$}node[below]{$M$};
\end{tikzpicture}
\qquad\qquad
\begin{tikzpicture}[xscale=.5,yscale=.3]
\draw(-6,0)node{$\Longrightarrow$};
\draw[\RED,thick](-85:4.5).. controls +(70:15) and +(110:15) .. (-95:4.5);
\draw[ultra thick](-120:6)edge[bend left](-95:4.5);
\draw[ultra thick](-85:4.5)edge[bend left](-55:6);
\draw[ultra thick](90:4)node[above]{$P$}node{$\bullet$}(-85:4.5)node{$\bullet$}to
(90:4)node{$\bullet$}to(-95:4.5)node{$\bullet$}
(-90:4.5)node[below]{$M'\;M''$};
\end{tikzpicture}

\qquad\qquad\quad
\begin{tikzpicture}[xscale=.4,yscale=.3]
\draw[\RED,thick](90:4)node[below right]{+}to[bend left=42.5](-90:4.5);
\draw[\RED,thick,dashed](90:4)to[bend right=42.5](-90:4.5)node[above left]{+};
\draw[\BLUE,thick](0,4)to(0,-4.5);
\draw[thick](0,4)node{$\bullet$}node[above]{$P_1$}
    (0,-4.5)node{$\bullet$}node[below]{$P_2$};
    \draw(2,-9)node{};
\end{tikzpicture}\qquad\qquad\quad\quad
\begin{tikzpicture}[xscale=.4,yscale=.3]
\draw(-7,0)node{$\Longrightarrow$};
\draw[\RED,thick](0,-4).. controls +(60:15) and +(120:15) .. (0,-4);
\draw[\RED,thick,dashed](0,4).. controls +(-60:15) and +(-120:15) .. (0,4);
\draw[ultra thick](0,4)edge[bend left=20](0,-4)
    edge[bend right=20](0,-4);
\draw(0,4)node[above]{$P_1$}node{$\bullet$}(0,-4)node[below]{$P_2$}node{$\bullet$};
\end{tikzpicture}
\caption{The non-trivial bijections for cutting}
\label{fig:corres}
\end{figure}

Let $\R$ be a subset of an admissible triangulation $\T$.
We define $\surf/\R$ to be the marked surface obtained from $\surf$ by cutting successively along each tagged curve in $\R$. Clearly the new marked surface is independent of the choice of orders of tagged curves in $\R$ and it inherits an admissible triangulation $\T\setminus\R$ from $\surf$. Denote by $\TC_{\R}$ the set of tagged curves $(\gamma,\kappa)$ in $\TC\setminus\R$
that do not intersect the tagged curves in $\R$. By induction, there is a canonical bijection from $\TC_{\R}$ to $\mathbf{C}^\times(\surf/\R)$. For each tagged curve $(\gamma,\kappa)\in\TC_{\R}$, we still use the notation $(\gamma,\kappa)$ to denote its image under this bijection.

One the other hand, the object $R=\bigoplus_{(\gamma,\kappa)\in\R}X^\T_{(\gamma,\kappa)}$ is a direct summand of the canonical cluster tilting object $T_\T$ (cf. \eqref{eq:ccto}) in $\C(\T)$.
Then the Calabi-Yau reduction ${}^\bot R[1]/(R)$ is a $2$-Calabi-Yau category with a cluster tilting object $T_\T\setminus R$ (see \cite[Section~4]{IY}). The following lemma will be used in the proof of the main result in the next subsection. Indeed, this generalizes a result in \cite{MP} on the relation between Calabi-Yau reduction and cutting to the punctured case.

\begin{lemma}\label{lem:cutting}
Let $\R$ be a subset of an admissible triangulation $\T$ and $R=\bigoplus_{(\gamma,\kappa)\in\R}X^\T_{(\gamma,\kappa)}$.
Then there is a canonical triangle equivalence $\xi\colon{}^\bot R[1]/(R)\simeq\C(\T\setminus\R)$
satisfying
\begin{gather}\label{eq:fff}
    \xi\left(X^\T_{(\gamma,\kappa)}\right)\cong X^{\T\setminus\R}_{(\gamma,\kappa)}
\end{gather}
for each $(\gamma,\kappa)\in\TC_{\R}$.
\end{lemma}
\begin{proof}
Noticing that the corresponding biquiver with potential $(Q^{\T\setminus{\R}},W^{\T\setminus{\R}})$ can be obtained from
the biquiver with potential $(Q^{\T_{}},W^{\T_{}})$
by deleting the vertices corresponding to the tagged curves in $\R$.
By \cite[Theorem~7.4]{K09},
the canonical projection $\pi:\Lambda^\T\to\Lambda^{\T\setminus{\R}}$ induces the required equivalence $\xi\colon {}^\bot \hua{R}[1]/\hua{R} \simeq\C(\surf/\R)$.
Furthermore, for each tagged curve $(\gamma,\kappa)\in\TC_{\R}$,
the support of the representation $M^\T_{(\gamma,\kappa)}$ does not contain  the vertices corresponding to tagged curves in $\R$.
Hence it is preserved by the projection $\pi$.
Thus we deduce that \eqref{eq:fff} holds.
\end{proof}


\subsection{Intersection numbers}

\begin{theorem}\label{thm:Int}
Let $(\gamma_1,\kappa_1)$ and $(\gamma_2,\kappa_2)$ be two tagged curves in $\TC$. Then
\[
\Int((\gamma_1,\kappa_1),(\gamma_2,\kappa_2))=
\dim_\k\Ext^1_{\C(\T)}(X^\T_{(\gamma_1,\kappa_1)},X^\T_{(\gamma_2,\kappa_2)})
\]
for any admissible triangulation $\T$ of $\surf$.
\end{theorem}
\begin{proof}
The proof is given in Section~\ref{sec:proof}.
\end{proof}

\subsection{Connectedness of cluster exchange graphs}
We apply our main result to study the exchange graph $\CEG(\C(\T))$ of the cluster category $\C(\T)$.

\begin{corollary}\label{cor:bi}
The correspondence $X^\T$ in Theorem~\ref{thm:bi} induces bijections
\begin{enumerate}
  \item between tagged curves without self-intersections in $\surf$ and indecomposable rigid objects in $\C(\T)$;
  \item between tagged triangulations of $\surf$ and cluster tilting objects in $\C(\T)$.
\end{enumerate}
Moreover, under the last bijection, flip of tagged triangulations is compatible with mutation of cluster tilting objects.
\end{corollary}
\begin{proof}
By Remark~\ref{rem:De}, for any indecomposable representation $M$ of $(Q^\T,W^\T)$, if there is no tagged curve $(\gamma,\kappa)$ such that $M\cong M^\T_{(\gamma,\kappa)}$, then $\Hom_{\Lambda^\T}(M,\tau M)\neq 0$. This implies that $\Ext^1_{\hua{C}(\T)}(M,M)\neq0$. Hence $M$ is not a rigid object in $\C(\T)$.
Thus, all rigid objects are string objects and hence the proposition follows from Theorem~\ref{thm:Int}.
\end{proof}

\begin{theorem}\label{thm:conn}
The cluster exchange graph $\CEG(\C(\T))$ is connected and $\CEG(\C(\T))\cong\EGT(\surf)$.
In particular, each rigid object in $\C(\T)$ is reachable.
\end{theorem}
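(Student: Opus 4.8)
The plan is to transport the known connectedness result for $\EGT(\surf)$ across the bijection $X^\T$, using Corollary~\ref{cor:bi} to identify cluster tilting objects with tagged triangulations and mutations with flips. First I would recall from \cite{FST} that the exchange graph $\EGT(\surf)$ of tagged triangulations of a marked surface with non-empty boundary is connected (this is exactly where the hypothesis "non-empty boundary" is essential — the corresponding statement fails for some closed surfaces, which is the phenomenon alluded to in the abstract). By Corollary~\ref{cor:bi}(2), the map $X^\T$ restricts to a bijection between the set of tagged triangulations of $\surf$ and the set of cluster tilting objects in $\C(\T)$, and by the last assertion of that corollary this bijection intertwines tagged flip with mutation. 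Hence it is an isomorphism of graphs $\CEG(\C(\T))\cong\EGT(\surf)$, and connectedness of the latter gives connectedness of the former.

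The one point that needs a small argument is the compatibility of the \emph{graph structures}, i.e. that an edge of $\CEG(\C(\T))$ (a mutation $\mu_i$ of cluster tilting objects) corresponds under $X^\T$ to an edge of $\EGT(\surf)$ (a flip) and conversely. One direction is immediate from Corollary~\ref{cor:bi}: a flip of a tagged triangulation is sent to a mutation. For the converse, given a cluster tilting object $T$ and its image, the tagged triangulation $\T_T=(X^\T)^{-1}(T)$, a mutation $\mu_i(T)$ differs from $T$ in one indecomposable summand; since $X^\T$ is a bijection between the two finite sets (tagged triangulations and cluster tilting sets) of the same cardinality $n$, the preimage $(X^\T)^{-1}(\mu_i(T))$ is a tagged triangulation sharing $n-1$ tagged curves with $\T_T$, hence obtained from $\T_T$ by a flip. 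So $X^\T$ induces a bijection on edge sets as well, giving the graph isomorphism.

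Finally, for the last sentence: once $\CEG(\C(\T))$ is connected, every cluster tilting object is mutation-equivalent to the canonical one $T_\T$, so every indecomposable summand of any cluster tilting object is reachable; combined with Corollary~\ref{cor:bi}(1), which identifies indecomposable rigid objects with tagged curves without self-intersections (all of which extend to some tagged triangulation by the maximality in Definition~\ref{def:triangulation}), we conclude that every rigid object in $\C(\T)$ is reachable.

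I do not expect a serious obstacle here: the theorem is essentially a formal consequence of the earlier results. The only mild subtlety is making sure that the bijection of Corollary~\ref{cor:bi}(2) really is an isomorphism of the \emph{graphs}, not merely of the vertex sets — but this follows from the counting/cardinality argument above together with the stated flip–mutation compatibility. The real content has already been packed into Theorem~\ref{thm:Int} (and hence Corollary~\ref{cor:bi}); this theorem just harvests it.
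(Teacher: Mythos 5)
Your proposal is correct and follows the same route as the paper: the graph isomorphism is read off from Corollary~\ref{cor:bi} (bijection on cluster tilting objects intertwining flip and mutation), and connectedness is imported from Fomin--Shapiro--Thurston's connectedness of $\EGT(\surf)$. The extra details you supply (the edge-level converse via uniqueness of the flip, and the extension of a rigid tagged curve to a tagged triangulation for reachability) are exactly the points the paper leaves implicit.
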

\begin{proof}
The isomorphism $\EGT(\surf)\cong\CEG(\C(\T))$ of graphs follows directly from Corollary~\ref{cor:bi}. The connectedness of $\EGT(\surf)$ is proved in \cite[Proposition 7.10]{FST}.
\end{proof}

\section{An example}\label{sec:ex}
Let $\surf$ be a disk with three marked points on the boundary and two punctures in the interior. The corresponding cluster category $\C(\T)$ is the classical cluster category of type $\widetilde{D}_5$.
Let $\T$ be the admissible triangulation shown in the top left picture of Figure~\ref{fig:exT}.
\begin{figure}[h]\centering
\begin{tikzpicture}[scale=.7]
\draw[ultra thick](0,0)circle(4);
\draw[\RED,thick](180+10:4).. controls +(-45:6) and +(135:6) .. (0-10:4);
\draw[\RED,thick](0-10:4).. controls +(160:5) and +(230:5) .. (0-10:4);
\draw[\RED,thick](180+10:4).. controls +(160:-5) and +(230:-5) .. (180+10:4);
\draw[\RED,thick](180+10:4).. controls +(75:4.5) and +(105:4.5) .. (0-10:4);
\draw[\RED,thick](-10:4)+(195:2.3)to(-10:4) (190:4)+(195:-2.3)to(190:4);
\draw[thick,\RED](-1.2,.9)node{1}(0,2.9)node{2}(1.5,.8)node{3}(.7,-1.8)node{4};
\draw[\BLUE,>=stealth,-<-=.15](-.95,0)to(0,-.7);
\draw[\BLUE,>=stealth,->-=.95](-.95,0)to(0,2.55);
\draw[\BLUE,>=stealth,->-=.95](0,2.55)to(0,-.7);
\draw[\BLUE,>=stealth,->-=.95](0,-.7)to(.95,-1.4);
\draw(-.7,1.4)node{$a$}(.3,1.2)node{$b$}(-.6,-.5)node{$c$}(.5,-.7)node{$d$};
\draw[very thick](-10:4)+(195:2.3)node{$\bullet$}(190:4)+(195:-2.3)node{$\bullet$};
\draw[very thick](90:4)node{$\bullet$}(0-10:4)node{$\bullet$}(180+10:4)node{$\bullet$};
\end{tikzpicture}
\qquad
\begin{tikzpicture}[scale=.7]
\draw[ultra thick](0,0)circle(4);
\draw[\RED,thick](180+10:4).. controls +(-45:6) and +(135:6) .. (0-10:4);
\draw[\RED,thick](0-10:4).. controls +(160:5) and +(230:5) .. (0-10:4);
\draw[\RED,thick](180+10:4).. controls +(160:-5) and +(230:-5) .. (180+10:4);
\draw[\RED,thick](180+10:4).. controls +(75:4.5) and +(105:4.5) .. (0-10:4);
\draw[\BLUE,>=stealth,->-=.6,bend right](-2.8,-1)to(-3,0.2);
\draw[\BLUE,>=stealth,->-=.6,bend right](2.5,-.35)to(2.8,-1.75);
\draw(-2.3,-.3)node{$\varepsilon_1$} (2.2,-1.3)node{$\varepsilon_4$};
\draw[\BLUE,>=stealth,->-=.6](0,2.55)to(0,4);
\draw[\BLUE,>=stealth,->-=.1] (0,2.55).. controls +(-90:3) and +(-30:3)..(190:4);
\draw(-.4,3.2)node{$a_{2_-}$} (-.4,2)node{$a_{2_+}$};
\draw[\BLUE,>=stealth,-<-=.6](-10:4)+(195:2.3)to(.95,-1.4);
\draw[\BLUE,>=stealth,-<-=.6](190:4)+(195:-2.3)to(-.95,0);
\draw[\BLUE,>=stealth,->-=.4](-.95,0).. controls +(30:3) and +(115:3)..(0-10:4);
\draw[\BLUE,>=stealth,->-=.4](.95,-1.4).. controls +(200:3) and +(-60:4)..(180+10:4);
\draw(1.35,-1.4)node[above]{$a_{4_-}$} (-1.35,-.1)node[above]{$a_{1_-}$}
(1,1)node[above]{$a_{1_+}$} (-2,-2.5)node{$a_{4_+}$};
\draw[\BLUE,>=stealth,->-=.5](0,-.7).. controls +(90:2) and +(70:4)..(180+10:4);
\draw[\BLUE,>=stealth,->-=.5](0,-.7).. controls +(-90:2) and +(70:-4)..(0-10:4);
\draw(-1.7,1.7)node{$a_{3_+}$} (1.7,-2.7)node{$a_{3_-}$};
\draw[very thick](-10:4)+(195:2.3)node{$\bullet$}(190:4)+(195:-2.3)node{$\bullet$};
\draw[very thick](90:4)node{$\bullet$}(0-10:4)node{$\bullet$}(180+10:4)node{$\bullet$};
\end{tikzpicture}

\begin{tikzpicture}[scale=.7]
\draw[ultra thick](0,0)circle(4);
\draw[\RED,thick](180+10:4).. controls +(-45:6) and +(135:6) .. (0-10:4);
\draw[\RED,thick](0-10:4).. controls +(160:5) and +(230:5) .. (0-10:4);
\draw[\RED,thick](180+10:4).. controls +(160:-5) and +(230:-5) .. (180+10:4);
\draw[\RED,thick](180+10:4).. controls +(75:4.5) and +(105:4.5) .. (0-10:4);
\draw[\BLUE,thick,->-=.5,>=stealth](-10:4)+(195:2.3).. controls +(-135:3.5) and +(-60:5)..(190:4);
\draw[\BLUE,thick,->-=.5,>=stealth](-10:4)+(195:2.3).. controls +(120:3.5) and +(55:5)..(190:4);
\draw[\BLUE,thick](0,1.9)node[below]{$\gamma_2$} (-110:3.1)node[right]{$\gamma_1$};
\draw[\BLUE,thick](1.56,-1)node{$+$};
\draw[very thick](90:4)node{$\bullet$}(0-10:4)node{$\bullet$}(180+10:4)node{$\bullet$};
\draw[very thick](-10:4)+(195:2.3)node{$\bullet$}(190:4)+(195:-2.3)node{$\bullet$};
\end{tikzpicture}
\begin{tikzpicture}[scale=.7]
\draw[ultra thick](0,0)circle(4);
\draw[\RED,thick](180+10:4).. controls +(-45:6) and +(135:6) .. (0-10:4);
\draw[\RED,thick](0-10:4).. controls +(160:5) and +(230:5) .. (0-10:4);
\draw[\RED,thick](180+10:4).. controls +(160:-5) and +(230:-5) .. (180+10:4);
\draw[\RED,thick](180+10:4).. controls +(75:4.5) and +(105:4.5) .. (0-10:4);
\draw[\BLUE,thick,->-=.5,>=stealth](-10:4)+(195:2.3)coordinate(v1)to(-10:4);
\draw[\BLUE,thick,-<-=.77,>=stealth]plot [smooth,tension=1] coordinates {(-10:4) (0,-3.5) (-2,0) (0,1.5) (v1)};
\draw[\BLUE,thick](85:2.5)node[below]{$\rho(\gamma_2)$} (2.6,-.6)node{$\rho(\gamma_1)$} (2,-1.23)node{$\times$};
\draw[very thick](90:4)node{$\bullet$}(0-10:4)node{$\bullet$}(180+10:4)node{$\bullet$};
\draw[very thick](-10:4)+(195:2.3)node{$\bullet$}(190:4)+(195:-2.3)node{$\bullet$};
\end{tikzpicture}
\caption{An example}\label{fig:exT}
\end{figure}
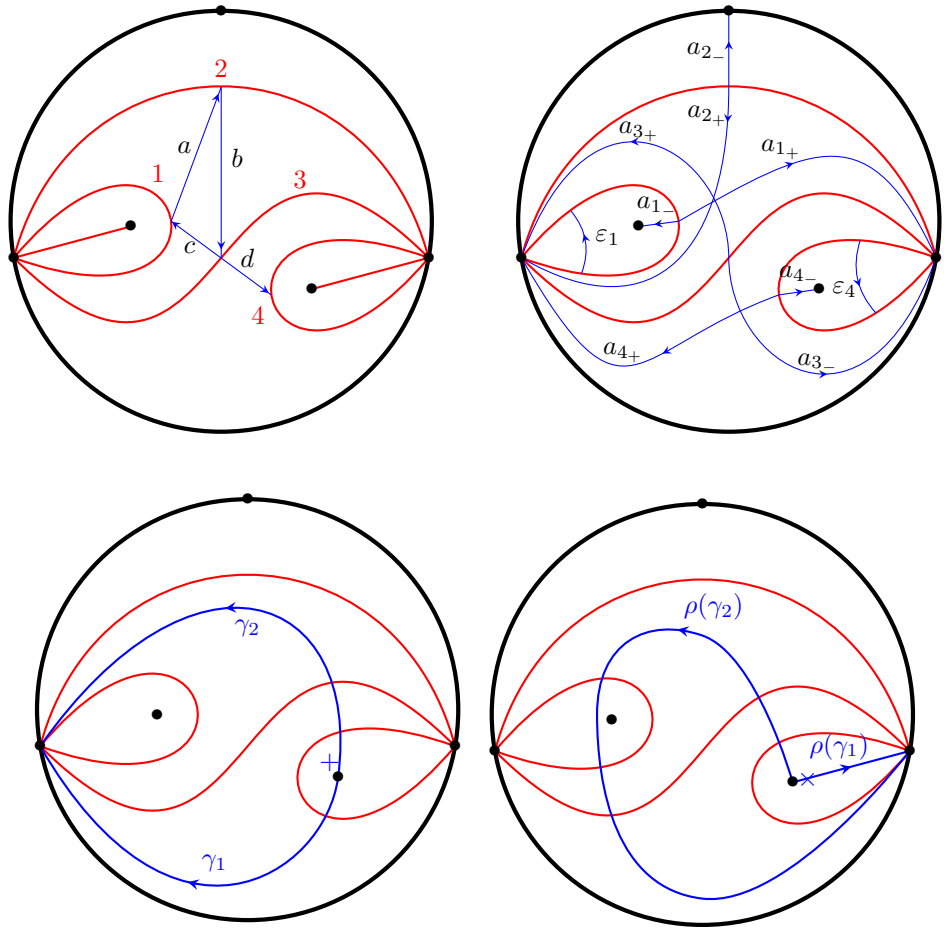
Then the associated biquiver with potential $(Q^\T,W^\T)$ introduced in Section~\ref{sec:QP2} is
\[
\xymatrix{
&2\ar[dr]^b\\
1\ar@{-->}@(lu,ld)[]_{\varepsilon_1}\ar[ur]^a&&3\ar[ll]^c\ar[rr]^d&&4\ar@{-->}@(ru,rd)[]^{\varepsilon_4}
}\]
with $W^\T=cba$ and the associated quiver with potential $(Q_\T,W_\T)$ introduced in Section~\ref{sec:QPtosurf} is
\[
\xymatrix{
&&2\ar[dr]^b\\
1'\ar[urr]^{a^{1'\to 2}}&1\ar[ur]|{a^{1\to2}}&&
3\ar@/^1pc/[lll]^{c^{3\to1'}}\ar[ll]|{c^{3\to1}}\ar[rr]^{d^{3\to 4}}\ar@/_1pc/[rrr]_{d^{3\to4'}}&&4&4'
}\]
with $W_\T=c^{3\to 1}b a^{1\to2}+c^{3\to 1'}b a^{1'\to2}$ (where the terms $c^{3\to 1'}b a^{1\to2}$ and $c^{3\to 1}b a^{1\to2}$ do not appear in $W_\T$ because they are not cycles).

We have $Z=\partial W^\T=\{ba,cb,ac\}$. Then the biquiver $Q^\T$ with $Z$ is precisely the one in Example~\ref{exm:clan}. Use the choice of disjoint subsets given in Example~\ref{exm:disjoint}:
\begin{itemize}
  \item $L_+(1)=\{c^{-1}>a_{1_+}>a\}$ and $L_-(1)=\{\varepsilon_1^{-1}>a_{1_-}>\varepsilon_1\}$;
  \item $L_+(2)=\{a^{-1}>a_{2_+}>b\}$ and $L_-(2)=\{a_{2_-}\}$;
  \item $L_+(3)=\{b^{-1}>a_{3_+}>c\}$ and $L_-(3)=\{a_{3_-}>d\}$;
  \item $L_+(4)=\{d^{-1}>a_{4_+}\}$ and $L_-(4)=\{\varepsilon_4^{-1}>a_{4_-}>\varepsilon_4\}$.
\end{itemize}
Using Construction~\ref{cstr:as}, the arc segments corresponding to direct letters are shown in the top pictures of Figure~\ref{fig:exT}, and inverse of letters corresponds to reverse of direction of arc segments.

Consider the tagged curves $(\gamma_1,\kappa_1)$ and $(\gamma_2,\kappa_2)$ shown in the bottom left figure of Figure~\ref{fig:exT}, where $\kappa_1(0)=0$ and $\kappa_2(0)=1$. The rotations $\rho(\gamma_1,\kappa_1)$ and $\rho(\gamma_2,\kappa_2)$ are shown in the bottom right figure of Figure~\ref{fig:exT}. The intersection number between $(\gamma_1,\kappa_1)$ and $(\gamma_2,\kappa_2)$ is
\[\Int((\gamma_1,\kappa_1),(\gamma_2,\kappa_2))=1.\]
On the representation side, since $\rho(\gamma_1,\kappa_1)\in\T^\times$, we have $M^\T_{\rho(\gamma_1,\kappa_1)}=0$ by Construction~\ref{cstr:N}. Hence
\[\begin{array}{rcl}
&&\dim\Ext^1_{\C(\T)}(X^\T_{(\gamma_1,\kappa_1)},X^\T_{(\gamma_2,\kappa_2)})\\
&=&\dim\Hom_{\Lambda^\T}(M^\T_{(\gamma_1,\kappa_1)},M^\T_{\rho(\gamma_2,\kappa_2)})
+\dim\Hom_{\Lambda^\T}(M^\T_{(\gamma_2,\kappa_2)},M^\T_{\rho(\gamma_1,\kappa_1)})\\
&=&\dim\Hom_{\Lambda^\T}(M^\T_{(\gamma_1,\kappa_1)},M^\T_{\rho(\gamma_2,\kappa_2)})
\end{array}\]
The words corresponding to $\gamma_1$ and $\rho(\gamma_2)$ are \[\m_{\gamma_1}=a_{4_+}a_{4-}^{-1}\qquad\text{and}\qquad
\m_{\rho(\gamma_2)}=a_{3_-}c^{-1}\varepsilon_1^{-1}cd^{-1}a_{4_-}^{-1},\] respectively. These two admissible words are precisely the two given in Example~\ref{exm:int} and we have $H^{\m_{\gamma_1},\m_{\rho(\gamma_2)}}$ contains only one element, which is $J=((0,2),(0,2))$ containing one punctured letter $a_{4_-}^{-1}$. Thus, we have
\[\begin{array}{rcl}
\dim\Ext^1_{\C(\T)}(X^\T_{(\gamma_1,\kappa_1)},X^\T_{(\gamma_2,\kappa_2)})
&=&\dim_\k\Hom_{\Lambda^\T}(M^\T_{(\gamma_1,\kappa_1)},M^\T_{\rho(\gamma_2,\kappa_2)})\\
&=&\dim_\k\Hom_{A_J}(\k_{\kappa_1(0)},\k_{1-\kappa_2(0)})\\
&=&1.
\end{array}\]

\section{Proof of Theorem~\ref{thm:Int}}\label{sec:proof}
\subsection{Adding marked points}

Recall that we fix an admissible triangulation $\T$ of $\surf$ and two tagged curves $(\gamma_1,\kappa_1)$ and $(\gamma_2,\kappa_2)$ in $\TC$.

For technical reasons, we add some new marked points on the boundary of $\surf$ as follows.
Let $E$ be the set of marked points $P$ in $M$ that are endpoints of $\gamma_1$ or $\gamma_2$. For each $P\in E$, we add two marked points on each side of $P$ on the boundary component containing $P$, denoted by $P_l', P_l''$ and $P_r', P_r''$ respectively, see Figure~\ref{fig:g1}.

\begin{figure}[htpb]\centering
\begin{tikzpicture}[scale=1.5,rotate=30]
\draw[ultra thick] (45-4*7:4) arc(45-4*7:45+4*7:4);
\draw[\RED,thick](45-14:4).. controls +(60:.2) and +(30:1) .. (45:4);
\draw[\RED,thick](45+14:4).. controls +(30:1) and +(60:.2) .. (45:4);
\draw[\RED,thick](45-21:4).. controls +(45:1) and +(45:1) .. (45:4);
\draw[\RED,thick](45+21:4).. controls +(45:1) and +(45:1) .. (45:4);
\foreach \j in {-3,...,3} {\draw[\RED] (45+7*\j:4)node{$\bullet$};}
\foreach \j in {-3,0,3} {\draw (45+7*\j:4)node{$\bullet$};}
\draw[\RED](45+7*2:3.8)node{$P_l''$};
\draw[\RED](45+7*1:3.8)node{$P_l'$};
\draw(45:3.8)node{$P$};
\draw[\RED](45-7:3.8)node{$P_r'$};
\draw[\RED](45-7*2:3.8)node{$P_r''$};
\draw[\RED](30:5)node{$\mathbf{R}$};
\end{tikzpicture}
\qquad
\begin{tikzpicture}[scale=.6]
\draw[ultra thick] (0,0) circle (1.3);
\foreach \j in {1,...,5}
    {\draw[\RED, thin] (90+72*\j:1.3) node{$\bullet$};}
\draw[\RED, thin]  (90:.85) node[black]{\small{$P$}}
    (90-72*1:.85)  node{\small{$P_r'$}}(90-72*2:.85) node{\small{$P_r''$}}
    (90+72*1:.85) node{\small{$P_l'$}}(90+72*2:.85) node{\small{$P_l''$}};
\draw[\RED,thick] plot [smooth,tension=1.5] coordinates {(90:1.3) (90-72:1.8) (90-72*2:1.3)};
\draw[\RED,thick] plot [smooth,tension=1.5] coordinates {(90:1.3) (90+72:1.8) (90+72*2:1.3)};
\draw[\RED,thick] plot [smooth,tension=1] coordinates {(90:1.3) (90+72:2.3) (90+72*2:2.3) (90-72*2:1.3)};
\draw[\RED,thick] plot [smooth,tension=1.5] coordinates {(90:1.3) (165:3) (-90:3) (15:3) (90:1.3)};
\draw[thin] (90:1.3) node{$\bullet$};
\end{tikzpicture}
\caption{Adding marked points}
\label{fig:g1}
\end{figure}
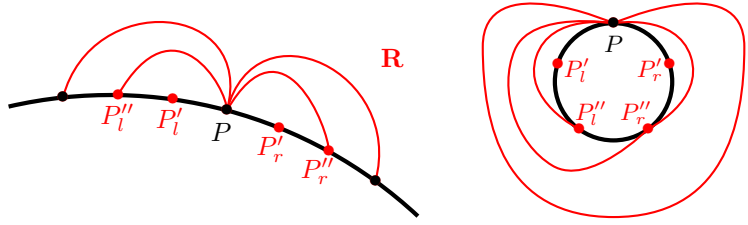

Let $\surf'$ be the new marked surface obtained from $\surf$ by adding these new marked points. For each $P\in E$, let $\R_P$ be the set of tagged curves as in Figure~\ref{fig:g1}, where the right picture is for the case that $P$ is a unique marked point on a boundary component. Take $\R$ to be the disjoint union of $\R_P$ for $P\in E$. Note that any two tagged curves in $\R$ do not cross each other and the cutting $\surf'/\R$ is canonically homeomorphic to $\surf$. Hence $\T\cup\R$ is an admissible triangulation of $\surf'$.

\begin{lemma}\label{lem:predim}
Under the notation above, we have
\[
\dim_\k\Ext^1_{\C(\T)}(X^{\T}_{(\gamma_1,\kappa_1)},X^{\T}_{(\gamma_2,\kappa_2)})
=\dim_\k\Ext^1_{\C(\T\cup\R)}(X^{\T\cup\R}_{(\gamma_1,\kappa_1)},X^{\T\cup\R}_{(\gamma_2,\kappa_2)}).
\]
\end{lemma}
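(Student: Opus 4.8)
The plan is to reduce the computation of $\Ext^1$ in $\C(\T\cup\R)$ back to $\C(\T)$ by applying the Calabi-Yau reduction machinery of Lemma~\ref{lem:cutting}. Set $R=\bigoplus_{(\gamma,\kappa)\in\R}X^{\T\cup\R}_{(\gamma,\kappa)}$, which is a direct summand of the canonical cluster tilting object $T_{\T\cup\R}$ in $\C(\T\cup\R)$. Since $\surf'/\R$ is canonically homeomorphic to $\surf$ and $(\T\cup\R)\setminus\R=\T$, Lemma~\ref{lem:cutting} provides a triangle equivalence $\xi\colon {}^\bot R[1]/(R)\simeq\C(\T)$ with $\xi(X^{\T\cup\R}_{(\gamma,\kappa)})\cong X^\T_{(\gamma,\kappa)}$ for each $(\gamma,\kappa)\in\TC_{\R}$. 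The first point to check is that both $(\gamma_1,\kappa_1)$ and $(\gamma_2,\kappa_2)$ lie in $\TC_{\R}$: the red curves in $\R$ are contained in small neighbourhoods of the marked points of $E$ and cut out once-punctured or boundary-adjacent regions disjoint from the interior of $\gamma_1,\gamma_2$, so by the construction in Figure~\ref{fig:g1} one has $\Int((\gamma_j,\kappa_j),(\delta,\kappa_\delta))=0$ for every $(\delta,\kappa_\delta)\in\R$, and $\gamma_j\notin\R$ since $\gamma_j$ is not one of the added curves. Hence $X^{\T\cup\R}_{(\gamma_j,\kappa_j)}\in {}^\bot R[1]$.

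Next I would pass $\Ext^1$ through the reduction. The general fact (see \cite[\S~4]{IY}) is that for a $2$-Calabi-Yau triangulated category $\C$ with cluster tilting object containing $R$ as a summand, the reduction $\C_R:={}^\bot R[1]/(R)$ is again $2$-Calabi-Yau and, for objects $A,B\in {}^\bot R[1]$, the $\Ext^1$ computed in $\C_R$ agrees with the one computed in $\C$; more precisely $\Hom_{\C_R}(A,B[1])\cong\Hom_{\C}(A,B[1])$ because the morphisms factoring through $\add R$ contribute nothing in degree one (as $R$ is rigid and $A,B\in{}^\bot R[1]$, i.e. $\Hom(R,A[1])=\Hom(R,B[1])=0$, and dually by $2$-CY $\Hom(A,R[1])=\Hom(B,R[1])=0$). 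Applying the equivalence $\xi$ and combining with the identification $\xi(X^{\T\cup\R}_{(\gamma_j,\kappa_j)})\cong X^{\T}_{(\gamma_j,\kappa_j)}$ then yields
\[
\dim_\k\Ext^1_{\C(\T\cup\R)}(X^{\T\cup\R}_{(\gamma_1,\kappa_1)},X^{\T\cup\R}_{(\gamma_2,\kappa_2)})
=\dim_\k\Ext^1_{\C(\T)}(X^{\T}_{(\gamma_1,\kappa_1)},X^{\T}_{(\gamma_2,\kappa_2)}),
\]
which is the claim.

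The main obstacle I anticipate is the first step rather than the second: verifying carefully that $(\gamma_1,\kappa_1)$ and $(\gamma_2,\kappa_2)$ do not intersect any curve of $\R$, and in particular that the tagged intersection numbers (not merely the ordinary ones) vanish. Because the marked points $P_l',P_l'',P_r',P_r''$ are placed immediately beside a puncture-adjacent or boundary endpoint $P$ of $\gamma_i$, one has to be slightly attentive to the tagging at $P$: the curves in $\R$ closest to $\gamma_i$ are tagged precisely so that no tagged intersection is created, and this is exactly the content of the configuration drawn in Figure~\ref{fig:g1}. Once that local picture is confirmed, the membership $X^{\T\cup\R}_{(\gamma_j,\kappa_j)}\in{}^\bot R[1]$ follows from Theorem~\ref{thm:Int} applied in $\C(\T\cup\R)$ to the pairs $(\gamma_j,\kappa_j)$ and the curves of $\R$ — or, to avoid circularity with the theorem being proved, directly from the fact that $R$ is a partial cluster tilting object and $X^{\T\cup\R}_{(\gamma_j,\kappa_j)}$ is a string object whose support representation omits the vertices indexed by $\R$, so no extension with $R$ is possible. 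The rest is a routine invocation of Iyama-Yoshino reduction together with Lemma~\ref{lem:cutting}.
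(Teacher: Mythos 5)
Your proposal is correct and follows essentially the same route as the paper: apply Lemma~\ref{lem:cutting} to obtain the equivalence $\xi\colon{}^\bot R[1]/(R)\simeq\C(\T)$ identifying the relevant string objects, and then invoke the Iyama--Yoshino fact (their Lemma~4.8) that $\Ext^1$ in the reduction ${}^\bot R[1]/(R)$ agrees with $\Ext^1$ in $\C(\T\cup\R)$ for objects of ${}^\bot R[1]$. Your extra care in checking that $(\gamma_1,\kappa_1),(\gamma_2,\kappa_2)\in\TC_{\R}$ is a point the paper leaves implicit in the construction of $\R$, but it does not change the argument.
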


\begin{proof}
Let $R=\bigoplus_{(\gamma,\kappa)\in\R}X^{\T\cup\R}_{(\gamma,\kappa)}$, a direct summand of $T_{\T\cup\R}$ in $\C(\T\cup\R)$. By Lemma~\ref{lem:cutting},
there is an equivalence
$\xi\colon{}^\bot R[1]/(R)\simeq\C(\T)$
sending $X^{\T\cup\R}_{(\gamma_i,\kappa_i)}$ to $X^\T_{(\gamma_i,\kappa_i)}$, $i=1, 2$ (noting that any tagged curve in $\R$ does not cross $(\gamma_i,\kappa_i)$). Hence,
\[\Ext^1_{\C(\T)}(X^\T_{(\gamma_1,\kappa_1)},X^\T_{(\gamma_2,\kappa_2)})\cong\Ext^1_{{}^\bot R[1]/ (R)}(X^{\T\cup\R}_{(\gamma_1,\kappa_1)},X^{\T\cup\R}_{(\gamma_2,\kappa_2)}).\]
By \cite[Lemma~4.8]{IY}, for any two objects $X_1,X_2\in{}^\bot R[1]$, there is an isomorphism \[\Ext^1_{{}^\bot R[1]/ (R)}(X_1,X_2)\cong\Ext^1_{\C(\T\cup\R)}(X_1,X_2).\]
Therefore, we get the equality, as required.
\end{proof}

Now we consider another admissible triangulation of $\surf'$. For each $P\in E$, let $\R'_P$ be the set of tagged curves as in Figure~\ref{fig:g2} and $\R'$ the disjoint union of $\R'_P$. By Lemma~\ref{lem:ex}, we can extend $\R'$ to an admissible triangulation $\T'$ of $\surf'$.
Due to Theorem~\ref{thm:ind}, there is an equivalence $\Theta\colon\C(\T\cup\R)\simeq\C(\T')$ such that $\Theta\left(X^{\T\cup\R}_{(\gamma_i,\kappa_i)}\right)\cong X^{\T'}_{(\gamma_i,\kappa_i)}$, for $i=1,2$.
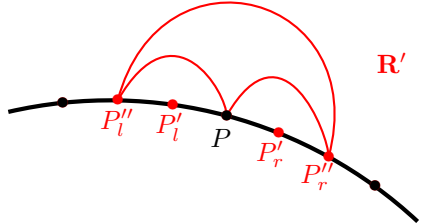
\begin{figure}[htpb]\centering
\begin{tikzpicture}[scale=1.5,rotate=30]
\draw[ultra thick] (45-4*7:4) arc(45-4*7:45+4*7:4);
\draw[\RED,thick](45-14:4).. controls +(60:.2) and +(30:1) .. (45:4);
\draw[\RED,thick](45+14:4).. controls +(30:1) and +(60:.2) .. (45:4);
\draw[\RED,thick](45-14:4).. controls +(45:1.5) and +(45:1.5) .. (45+14:4);
\foreach \j in {-3,...,3} {\draw[\RED] (45+7*\j:4)node{$\bullet$};}
\foreach \j in {-3,0,3} {\draw (45+7*\j:4)node{$\bullet$};}
\draw[thick](45:3.8)node{$P$};
\draw[\RED](30:5)node{$\mathbf{R}'$}
(45+7*2:3.8)node{$P_l''$}(45+7*1:3.8)node{$P_l'$}
(45-7:3.8)node{$P_r'$}(45-7*2:3.8)node{$P_r''$};
\end{tikzpicture}
\caption{A partial triangulation associated to new marked points}
\label{fig:g2}
\end{figure}

Therefore, this equivalence together with Lemma~\ref{lem:predim} implies that it is sufficient to prove $\Int((\gamma_1,\kappa_1),(\gamma_2,\kappa_2))=
\dim_\k\Ext^1_{\C(\T')}(X^{\T'}_{(\gamma_1,\kappa_1)},X^{\T'}_{(\gamma_2,\kappa_2)}).$

Without loss of generality, fix representatives of
$\gamma_1$ and $\gamma_2$ with minimal intersections with $\T'$ and with each other.
We further require that any intersection $\gamma_1\cap\gamma_2$ does not intersect $\T'$.

\subsection{Normal intersections in the interior}

We will use the notion of int-pairs from Section~\ref{subsec:results}. Recall that for any two admissible words $\m$ and $\r$, we use $H^{\m,\r}$ to denote the set of int-pairs from $\m$ to $\r$. For $v=0,1,2$, let $H_{v}^{\m,\r}$ be the subset of $H^{\m,\r}$ consisting of int-pairs that contain $v$ punctured letters. Then $H^{\m,\r}=H_{0}^{\m,\r}\cup H_{1}^{\m,\r}\cup H_{2}^{\m,\r}$.

\begin{lemma}\label{lem:above}
Let $(\gamma_1,\kappa_1)$ and $(\gamma_2,\kappa_2)$ be the two tagged curves in the theorem. There is a bijection between $\gamma_1\cap\gamma_2$ and the disjoint union $H_0^{\m_{\gamma_1},\m_{\rho(\gamma_2)}}\cup H_0^{\m_{\gamma_2},\m_{\rho(\gamma_1)}}$.
\end{lemma}

\begin{proof}
Consider the triangle $\Delta_I$ that contains
an intersection $I$ in $\gamma_1\cap\gamma_2$.
Since the triangulation $\T'$ contains $\R'$ which is constructed as in Figure~\ref{fig:g2}, the following situations do not occur.
\[\begin{centering}
\begin{tikzpicture}[yscale=.4,rotate=180]
\draw[\RED,thick,bend right=15] (0,0)to (2,3) to (2,-3) to(0,0);
\draw (0,0)node{$\bullet$} (2,3)node{$\bullet$} (2,-3)node{$\bullet$};
\draw[\BLUE,thick,bend left=2](2,3)to(.3,-2.5);
\draw[\BLUE,thick,bend left=15](3,.5)to(0,0);
\draw[\BLUE,thick](2.3,1.4)node[above]{$\gamma_1$};
\draw[\BLUE,thick](0.8,-2.5)node[below]{$\gamma_2$};
\end{tikzpicture}\qquad
\begin{tikzpicture}[yscale=.4,rotate=180]
\draw[\RED,thick,bend right=15] (0,0)to (2,3) to (2,-3) to(0,0);
\draw (0,0)node{$\bullet$} (2,3)node{$\bullet$} (2,-3)node{$\bullet$};
\draw[\BLUE,thick,bend left=15](.3,2.5)to(.3,-2.5);
\draw[\BLUE,thick,bend left=15](3,.5)to(0,0);
\draw[\BLUE,thick](2.3,1.4)node[above]{$\gamma_1$};
\draw[\BLUE,thick](0.7,-2.5)node[below]{$\gamma_2$};
\end{tikzpicture}\qquad
\begin{tikzpicture}[yscale=.4,rotate=180]
\draw[\RED,thick,bend right=15] (0,0)to (2,3) to (2,-3) to(0,0);
\draw (0,0)node{$\bullet$} (2,3)node{$\bullet$} (2,-3)node{$\bullet$};
\draw[\BLUE,thick,bend left=15](.3,2.5)to(.3,-2.5);
\draw[\BLUE,thick,bend left=15](3,.5)to(0,0);
\draw[\BLUE,thick](2.3,1.4)node[above]{$\gamma_2$};
\draw[\BLUE,thick](0.7,-2.5)node[below]{$\gamma_1$};
\end{tikzpicture}
\end{centering}\]
Therefore, we can deduce that
$\gamma_1$ and $\gamma_2$ share at least one curve in $\T'$.
Hence the curve segments of $\gamma_1$ and $\gamma_2$ near $I$ has the form in Figure~\ref{fig:g3} with four possible right (resp. left) parts, where $\{a,b\}=\{1,2\}$.

\begin{figure}[htpb]\centering
	\begin{tikzpicture}[scale=0.56]
	
	\begin{scope}[xshift=-9cm,yshift=3cm,xscale=.7,yscale=.3]
	\draw[\RED,thick,bend right=15] (0,0)to (2,3) to (2,-3) to(0,0);
	\draw (0,0)node{$\bullet$} (2,3)node{$\bullet$} (2,-3)node{$\bullet$};
	\draw[\BLUE,thick,bend right=15](3,-.5)to(.5,-1.5);
	\draw[\BLUE,thick,bend left=15](3,.5)to(.5,1.5);
	\draw[\BLUE,thick](2.8,3)node[below]{$\gamma_a$};
	\draw[\BLUE,thick](2.8,-3)node[above]{$\gamma_b$};
	\end{scope}
	\begin{scope}[xshift=-6.7cm,yshift=1cm,xscale=.56,yscale=.91,rotate=180]
	\draw(-1.3,0)node[white]{x};
	\draw[thick,\BLUE](.7,.4).. controls +(0:1) and +(120:1) ..(3.5,-.5);
	\draw[thick,\BLUE](.7,-.4).. controls +(0:1) and +(-120:1) ..(3.5,.5);
	\draw[thick,\BLUE] (.7,.3)node[below]{$\gamma_b$} (.7,-.3)node[above]{$\gamma_a$};
	\draw[\RED,thick](4,0).. controls +(150:4) and +(-150:4) .. (4,0);
	\draw(4,0)node{$\bullet$}(2,0)node{$\bullet$};
	\end{scope}
	\begin{scope}[xshift=-6.7cm,yshift=-1cm,xscale=.56,yscale=.91,rotate=180]
	\draw[thick,\BLUE](.7,.4).. controls +(0:1) and +(120:1) ..(3.5,-.5);
	\draw[thick,\BLUE](.7,0)to(2,0);
	\draw[thick,\BLUE] (.7,.3)node[below]{$\gamma_b$} (.7,.1)node[above]{$\gamma_a$};
	\draw[\RED,thick](4,0).. controls +(150:4) and +(-150:4) .. (4,0);
	\draw(4,0)node{$\bullet$}(2,0)node{$\bullet$};
	\end{scope}
	\begin{scope}[xshift=-6.7cm,yshift=-3cm,xscale=.56,yscale=.91,rotate=180]
	\draw[thick,\BLUE](.7,-.4).. controls +(0:1) and +(-120:1) ..(3.5,.5);
	\draw[thick,\BLUE](.7,0)to(2,0);
	\draw[thick,\BLUE] (.7,-.1)node[below]{$\gamma_b$} (.7,-.3)node[above]{$\gamma_a$};
	\draw[\RED,thick](4,0).. controls +(150:4) and +(-150:4) .. (4,0);
	\draw(4,0)node{$\bullet$}(2,0)node{$\bullet$};
	\end{scope}
	
	\draw (-8,4.5)node{left part};
	
	\begin{scope}[xshift=9cm,yshift=3cm,xscale=.7,yscale=.3,rotate=180]
	\draw[\RED,thick,bend right=15] (0,0)to (2,3) to (2,-3) to(0,0);
	\draw (0,0)node{$\bullet$} (2,3)node{$\bullet$} (2,-3)node{$\bullet$};
	\draw[\BLUE,thick,bend right=15](3,-.5)to(.5,-1.5);
	\draw[\BLUE,thick,bend left=15](3,.5)to(.5,1.5);
	\draw[\BLUE,thick](2.8,3)node[above]{$\gamma_a$};
	\draw[\BLUE,thick](2.8,-3)node[below]{$\gamma_b$};
	\end{scope}
	\begin{scope}[xshift=6.7cm,yshift=1cm,xscale=.56,yscale=.91]
	\draw(-1.3,0)node[white]{x};
	\draw[thick,\BLUE](.7,.4).. controls +(0:1) and +(120:1) ..(3.5,-.5);
	\draw[thick,\BLUE](.7,-.4).. controls +(0:1) and +(-120:1) ..(3.5,.5);
	\draw[thick,\BLUE] (.7,.3)node[above]{$\gamma_b$} (.7,-.3)node[below]{$\gamma_a$};
	\draw[\RED,thick](4,0).. controls +(150:4) and +(-150:4) .. (4,0);
	\draw(4,0)node{$\bullet$}(2,0)node{$\bullet$};
	\end{scope}
	\begin{scope}[xshift=6.7cm,yshift=-1cm,xscale=.56,yscale=.91]
	\draw[thick,\BLUE](.7,.4).. controls +(0:1) and +(120:1) ..(3.5,-.5);
	\draw[thick,\BLUE](.7,0)to(2,0);
	\draw[thick,\BLUE] (.7,.3)node[above]{$\gamma_b$} (.7,.1)node[below]{$\gamma_a$};
	\draw[\RED,thick](4,0).. controls +(150:4) and +(-150:4) .. (4,0);
	\draw(4,0)node{$\bullet$}(2,0)node{$\bullet$};
	\end{scope}
	\begin{scope}[xshift=6.7cm,yshift=-3cm,xscale=.56,yscale=.91]
	\draw[thick,\BLUE](.7,-.4).. controls +(0:1) and +(-120:1) ..(3.5,.5);
	\draw[thick,\BLUE](.7,0)to(2,0);
	\draw[thick,\BLUE] (.7,-.1)node[above]{$\gamma_b$} (.7,-.3)node[below]{$\gamma_a$};
	\draw[\RED,thick](4,0).. controls +(150:4) and +(-150:4) .. (4,0);
	\draw(4,0)node{$\bullet$}(2,0)node{$\bullet$};
	\end{scope}
	
	\draw  (8,4.5)node{right part}
	(0,4.2)node[above]{middle part}
	(2.5,1)node[above,\BLUE]{$\gamma_{b}$}(2.5,-1)node[below,\BLUE]{$\gamma_{a}$};
	\draw[dashed] (5,2)to(-5,2) (5,-2)to(-5,-2)
	(2.5,0)to(3.5,0)(-2.5,0)to(-3.5,0);
	\draw[\RED, thick] (4,2)to(4,-2) (-4,-2)to(-4,2);
	\draw[\RED,dashed](0,0)circle(1)(0,1)node[above]{$\Delta_I$};
	\draw[\BLUE](0,0)node[below]{$I$};
	\draw[\BLUE, thick] (-5,1).. controls +(0:3.9) and +(180:3.9) .. (5,-1);
	\draw[\BLUE, thick] (-5,-1).. controls +(0:3.9) and +(180:3.9) .. (5,1);
	\end{tikzpicture}
	\caption{An interior intersection}
	\label{fig:g3}
\end{figure}
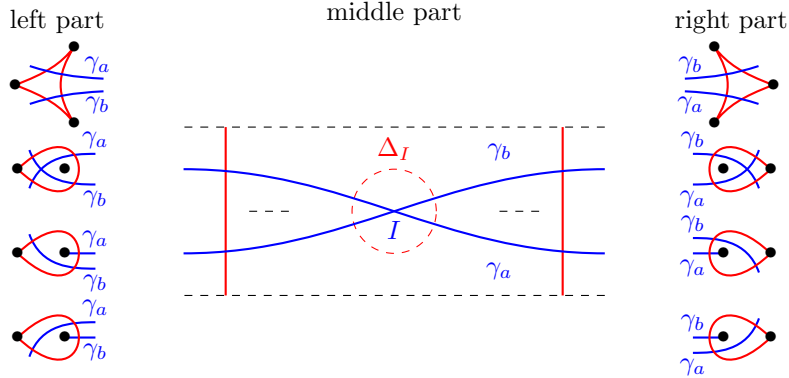

We will prove that such an intersection induces an int-pair in $H_0^{\m_{\gamma_b},\m_{\rho(\gamma_a)}}$.
Let
\[
    \m=\m_{\gamma_b}=\omega_m\cdots\omega_1\quad\text{and}\quad\r=\m_{\gamma_a}=\nu_r\cdots\nu_1.
\]
Set $\r'=\m_{\rho(\gamma_a)}$. We call an arc segment is in $\R'$ if it is in a triangle formed by curves in $\R'$ and boundary segments and we call a letter is from $\R'$ if its corresponding arc segment is in $\R'$. Note that $\gamma_a$ and $\rho(\gamma_a)$ intersect the triangulation $\T'$ in the same way except for the parts near endpoints of $\gamma_a$ in $\M$, where $\rho(\gamma_a)$ intersects one more curve in $\R'$ than $\gamma_a$ as in Figure~\ref{fig:g4}.
Then $\r'=x \nu_{r-1}\cdots\nu_{2} y$,
where
\[
    x=\begin{cases}\nu_r,&\gamma_a(1)\in\P\\  \nu'_{r+1}\nu'_r,&\gamma_a(1)\in\M\end{cases}\quad
        \text{and}\quad
    y=\begin{cases}\nu_r,&\gamma_a(0)\in\P\\  \nu'_{1}\nu'_0,&\gamma_a(0)\in\M\end{cases}
\]
where $\nu'_0$, $\nu'_{1}$, $\nu'_r$ and $\nu'_{r+1}$ are letters corresponding to arc segments of $\rho(\gamma_a)$ in $\R'$. By Lemma~\ref{lem:compprod}~(1) a letter in $(\r')^{\pm1}$ from $\R'$ is not smaller than any letter in $\m^{\pm1}$ or $\r^{\pm1}$ from $\R'$ (cf. also Figure~\ref{fig:g4}). In particular, $(\r')^{\pm1}$ and $\m^{\pm1}$/$\r^{\pm1}$ do not share letters from $\R'$.

\begin{figure}[htpb]\centering
	\begin{tikzpicture}[scale=1.5,rotate=30]
	\draw[\BLUE,thick](45:4)to[bend right=5](50:5.5);
	\draw[\BLUE,thick](45-7:4)to[bend left=10](50-1:5.5);
	\draw[ultra thick] (45-4*7:4) arc(45-4*7:45+4*7:4);
	\draw[\RED,thick](45-14:4).. controls +(60:.2) and +(30:1) .. (45:4);
	\draw[\RED,thick](45+14:4).. controls +(30:1) and +(60:.2) .. (45:4);
	\draw[\RED,thick](45-14:4).. controls +(45:1.5) and +(45:1.5) .. (45+14:4);
	\foreach \j in {-3,...,3} {\draw[\RED] (45+7*\j:4)node{$\bullet$};}
	\foreach \j in {-3,0,3} {\draw (45+7*\j:4)node{$\bullet$};}
	\draw(45:3.8)node{$P$};
	\draw(45-3:4.7)node[\BLUE]{$_{\rho(\gamma_a)}$};
	\draw(45+3:4.6)node[\BLUE]{$_{\gamma_a}$};
	\end{tikzpicture}
	\caption{The rotation of $\gamma_a$}
	\label{fig:g4}
\end{figure}
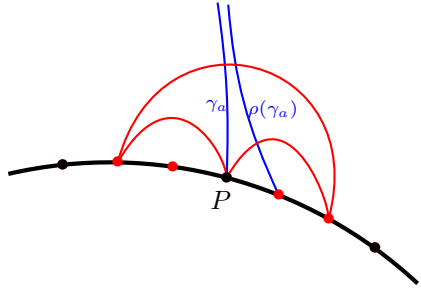

Without loss of generality, we assume that both of the orientations of $\gamma_a$ and $\gamma_b$ are from right to left in Figure~\ref{fig:g3}. Then the curve segments in the middle part correspond to the subwords $\m_{(i,j)}$ and $\r_{(h,l)}$ for some $0<i<j<m$, $0<h<l<r$ and the arc segments in the left (resp. right) part correspond to $\omega_j$ and $\nu_l$ (resp. $\omega_i$ and $\nu_h$). By Lemma~\ref{lem:compprod}~(2), we have $\omega_i^{-1}<\nu_h^{-1}$ and $\omega_j<\nu_l$.
Thus $J_I:=\left((i,j),(h,l)\right)$ is an int-pair in $H_0^{\m,\r}$. Moreover, it is clear that any arc segment of $\gamma_a$ which connects a marked point in $M$ does not appear in Figure~\ref{fig:g3}. Therefore $\nu_l\r_{(h,l)}\nu_j$ do not have letters from $\R'$ and thus it is also a subword of $\r'$. Hence $J_I$ is also an int-pair in $H_0^{\m,\r'}$ and we obtain a map
\begin{eqnarray*}
  J\colon\gamma_1\cap\gamma_2&\to&H_0^{\m_{\gamma_1},\m_{\rho(\gamma_2)}}\cup H_0^{\m_{\gamma_2},\m_{\rho(\gamma_1)}}\\
  I&\mapsto&J_I
\end{eqnarray*}
Clearly, different intersections correspond to different int-pairs. Hence the map $J$ is injective. So what is left to show is that the map $J$ is surjective.

Let $J_0=\left((i,j),(h,l)\right)$ be an int-pair in $H_0^{\m,\r'}$. Without loss of generality, we assume that $\m_{(i,j)}=\r'_{(h,l)}$ with $\omega_i^{-1}<{\nu'}_{h}^{-1}$ and $\omega_j<\nu'_l$, where $\nu'_a=\nu_a$ if $1<a<r$.
Since $\m$ and $\r'$ do not share letters from $\R'$ and $\m_{(i,j)}=\r'_{(h,l)}$ contains no punctured letters, we have $\r'_{(h,l)}=\r_{(h,l)}$ and the letters $\omega_i$, $\nu'_{h}$, $\omega_j$ and $\nu'_l$ exist.
Since $\omega_i^{-1}$ and ${\nu'}_{h}^{-1}$ are comparable, by Lemma~\ref{lem:compprod}~(1) their corresponding arc segments are in the same triangle. Hence if ${\nu'_{h}}^{-1}$ is from $\R'$, then so is $\omega_i^{-1}$. This is a contradiction because ${\nu'_{h}}^{-1}$ is a letter in $(\r')^{-1}$ and $\omega_i^{-1}$ is in $\m^{-1}$.
Hence neither $\nu'_h$ nor $\omega_i$ is from $\R'$. Similarly, neither $\nu'_l$ nor $\omega_j$ is from $\R'$. In particular, their corresponding arc segments do not connect to a marked point in $\M$
and we have $\nu'_h=\nu_h$ and $\nu'_l=\nu_l$.
Then the curve segments corresponding to $\omega_j\m_{(i,j)}\omega_i$ and $\nu_l\r_{(h,l)}\nu_h=\nu'_l\r'_{(h,l)}\nu'_h$ are of the form in Figure~\ref{fig:g3}
(note that the left/right parts are the cases in Figure~\ref{fig:orders}~(1), where both $\gamma_i$ do not connect to marked points in $\M$).
By construction, we see that $J_0$ is in the image of the map $J$ above, as required.
\end{proof}

For an int-pair $J$ without punctured letters, since $A_J\cong\k$, the unique 1-dimensional $A_J$-module is $\k$. Then $N_{(\gamma_i,\kappa_i)}\cong\k$ as $A_J$-modules (cf. Notation~\ref{notation}). So we have the following consequence:
\begin{equation}\label{eq:1}
\Int(\gamma_1,\gamma_2)=\sum\limits_{\{a,b\}=\{1,2\}}\sum\limits_{J\in H_0^{\m_{\gamma_a},\m_{\rho(\gamma_b)}}}\dim_\k\Hom_{A_J}(N_{(\gamma_a,\kappa_a)},N_{\rho(\gamma_b,\kappa_b)}).
\end{equation}

\subsection{Tagged intersections at the ends}
Recall that $\p=\p(\gamma_1,\gamma_2)=\{(t_1,t_2)\in\{0,1\}^2\mid\gamma_1(t_1)=\gamma_2(t_2)\in\P\}$ is the set of intersections between $\gamma_1$ and $\gamma_2$ at $\P$. Let
\[\p_1=\{(t_1,t_2)\in\p\mid\gamma_1|_{t_1\rightarrow(1-t_1)}\nsim\gamma_2|_{t_2\rightarrow(1-t_2)}\}\]
and
\[\t_1=\{(t_1,t_2)\in\p_1\mid\kappa_1(t_1)\neq\kappa_2(t_2)\}.\]
There is an analogue result of Lemma~\ref{lem:above} for $\p_1$.
\begin{lemma}\label{lem:above2}
There is a bijection between $\p_1$ and the disjoint union $H_1^{\m_{\gamma_1},\m_{\rho(\gamma_2)}}\cup H_1^{\m_{\gamma_2},\m_{\rho(\gamma_1)}}$.
\end{lemma}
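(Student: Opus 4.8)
The plan is to repeat the argument of Lemma~\ref{lem:above}, replacing an interior intersection by a punctured intersection at the relevant puncture. First I would fix $(t_1,t_2)\in\p_1$, so that $\gamma_1(t_1)=\gamma_2(t_2)=P$ for some puncture $P\in\P$. Since $\T'$ is admissible, $P$ lies in a self-folded triangle $\Delta$ of $\T'$, and the terminal arc segments of $\gamma_1$ and $\gamma_2$ ending at $P$ both lie in $\Delta$; each of them corresponds to a punctured letter by Lemma~\ref{lem:compprod}~(4). Arguing as in the first part of the proof of Lemma~\ref{lem:above}, that is, excluding the local configurations near $\Delta$ that the construction of $\T'$ forbids, I would show that $\gamma_1$ and $\gamma_2$ share a common curve segment $\sigma$ ending at $P$ (in the extreme case $\sigma$ consists of a single arc segment, when the two curves agree only inside the self-folded triangle around $P$) and separate afterwards. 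Because $(t_1,t_2)\in\p_1$, i.e.\ $\gamma_1|_{t_1\to(1-t_1)}\nsim\gamma_2|_{t_2\to(1-t_2)}$, the two curves genuinely differ, so the endpoint of $\sigma$ away from $P$ is either an interior crossing with a curve of $\T'$ or a marked point, but not a puncture. Reading off the subwords along which $\m_{\gamma_1}$ and $\m_{\gamma_2}$ agree near $\sigma$ then produces an int-pair whose shared subword contains exactly one punctured letter, namely the one carried by the arc segment of $\sigma$ ending at $P$.

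Next, following Lemma~\ref{lem:above}, I would pass from $\m_{\gamma_a}$ to $\r'=\m_{\rho(\gamma_a)}$ (where $\{a,b\}=\{1,2\}$). Since the tagged rotation only modifies $\gamma_a$ near its endpoints in $\M$ (cf.\ Lemma~\ref{lem:+} and Figure~\ref{fig:g4}) and merely flips the tag at $P$, the int-pair is unchanged and lies in $H_1^{\m_{\gamma_b},\m_{\rho(\gamma_a)}}$; this gives an injective map
\[
\p_1\longrightarrow H_1^{\m_{\gamma_1},\m_{\rho(\gamma_2)}}\cup H_1^{\m_{\gamma_2},\m_{\rho(\gamma_1)}}.
\]
For surjectivity I would start from an int-pair $J_0=((i,j),(h,l))\in H_1^{\m,\r'}$ with $\m=\m_{\gamma_b}$ and $\r'=\m_{\rho(\gamma_a)}$. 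Its unique punctured letter sits at one end of the shared subword, so the associated shared curve segment has exactly one endpoint at a puncture $P$. As in Lemma~\ref{lem:above}, comparability together with Lemma~\ref{lem:compprod}~(3) forces none of the flanking letters to come from $\R'$, whence $J_0\in H_1^{\m,\r}$ (the $P$-end of the shared subword being unaffected by the rotation) and the curves separate in one of the forms of Figure~\ref{fig:orders}~(i), not reaching a marked point there. This recovers a punctured intersection of $\gamma_1$ and $\gamma_2$ at $P$, which lies in $\p_1$ precisely because the nontrivial separation witnesses $\gamma_1|_{t_1\to(1-t_1)}\nsim\gamma_2|_{t_2\to(1-t_2)}$. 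Hence the map is a bijection.

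The hard part will be pinning down the dichotomy between ``the shared segment $\sigma$ has its far endpoint in the interior or on the boundary'', which produces an int-pair in $H_1$, and ``$\sigma$ runs between two punctures'', which produces one in $H_2$ and corresponds to a pair in $\p\setminus\p_1$ to be treated separately; the defining condition of $\p_1$ is exactly what rules out the second possibility, but I would have to check carefully that the separation of $\gamma_1$ and $\gamma_2$ cannot be removed by a homotopy, so that the resulting int-pair is well defined with an unambiguous length. A secondary obstacle is the local analysis near the self-folded triangle around $P$: since no new marked points were added at punctures (the set $E$ in Figure~\ref{fig:g1} consists only of endpoints in $\M$), the exclusion of the bad local configurations near $P$ must be re-derived from the geometry of a self-folded triangle, rather than from the auxiliary partial triangulation $\R'$ of Figures~\ref{fig:g1} and~\ref{fig:g2}.
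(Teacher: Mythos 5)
Your proposal is correct and follows essentially the same route as the paper: the paper's own proof consists of two sentences observing that each intersection in $\p_1$ has the form of Figure~\ref{fig:pint} (puncture end inside the self-folded triangle on one side, one of the four separation forms on the other) and then invoking ``a similar proof of Lemma~\ref{lem:above}''. Your write-up is a faithful expansion of exactly that adaptation, and the two ``hard parts'' you flag --- that the $\p_1$-condition is what rules out a shared segment running puncture-to-puncture (which would instead give an $H_2$ int-pair and a twin intersection), and that the exclusion of bad local configurations at the $P$-end comes for free from the geometry of the self-folded triangle rather than from $\R'$ --- are precisely the points the paper leaves implicit.
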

\begin{proof}
Each intersection in $\p_1$ has the form in Figure~\ref{fig:pint} with four possible right parts and $\{a,b\}=\{1,2\}$.
Then the required bijection follows from a similar proof of Lemma~\ref{lem:above}.
\end{proof}

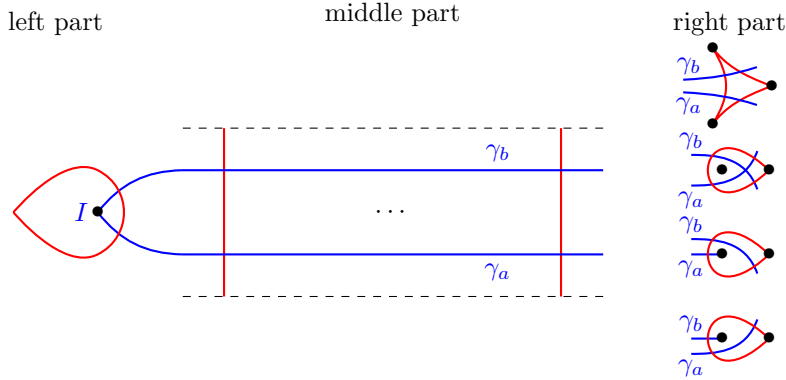
\begin{figure}[htpb]\centering
\begin{tikzpicture}[scale=0.56]
\draw (-8,4.5)node{left part};

\begin{scope}[xshift=9cm,yshift=3cm,xscale=.7,yscale=.3,rotate=180]
\draw[\RED,thick,bend right=15] (0,0)to (2,3) to (2,-3) to(0,0);
\draw (0,0)node{$\bullet$} (2,3)node{$\bullet$} (2,-3)node{$\bullet$};
\draw[\BLUE,thick,bend right=15](3,-.5)to(.5,-1.5);
\draw[\BLUE,thick,bend left=15](3,.5)to(.5,1.5);
\draw[\BLUE,thick](2.8,3)node[above]{$\gamma_a$};
\draw[\BLUE,thick](2.8,-3)node[below]{$\gamma_b$};
\end{scope}
\begin{scope}[xshift=6.7cm,yshift=1cm,xscale=.56,yscale=.91]
\draw(-1.3,0)node[white]{x};
\draw[thick,\BLUE](.7,.4).. controls +(0:1) and +(120:1) ..(3.5,-.5);
\draw[thick,\BLUE](.7,-.4).. controls +(0:1) and +(-120:1) ..(3.5,.5);
\draw[thick,\BLUE] (.7,.3)node[above]{$\gamma_b$} (.7,-.3)node[below]{$\gamma_a$};
\draw[\RED,thick](4,0).. controls +(150:4) and +(-150:4) .. (4,0);
\draw(4,0)node{$\bullet$}(2,0)node{$\bullet$};
\end{scope}
\begin{scope}[xshift=6.7cm,yshift=-1cm,xscale=.56,yscale=.91]
\draw[thick,\BLUE](.7,.4).. controls +(0:1) and +(120:1) ..(3.5,-.5);
\draw[thick,\BLUE](.7,0)to(2,0);
\draw[thick,\BLUE] (.7,.3)node[above]{$\gamma_b$} (.7,.1)node[below]{$\gamma_a$};
\draw[\RED,thick](4,0).. controls +(150:4) and +(-150:4) .. (4,0);
\draw(4,0)node{$\bullet$}(2,0)node{$\bullet$};
\end{scope}
\begin{scope}[xshift=6.7cm,yshift=-3cm,xscale=.56,yscale=.91]
\draw[thick,\BLUE](.7,-.4).. controls +(0:1) and +(-120:1) ..(3.5,.5);
\draw[thick,\BLUE](.7,0)to(2,0);
\draw[thick,\BLUE] (.7,-.1)node[above]{$\gamma_b$} (.7,-.3)node[below]{$\gamma_a$};
\draw[\RED,thick](4,0).. controls +(150:4) and +(-150:4) .. (4,0);
\draw(4,0)node{$\bullet$}(2,0)node{$\bullet$};
\end{scope}

\draw  (8,4.5)node{right part}
    (0,4.2)node[above]{middle part};

\draw[\BLUE,thick,bend right=25,>=stealth](-7,0)to(-5,-1);
\draw[\BLUE,thick,bend right=-25,>=stealth](-7,0)to(-5,1);
\draw(-6-1,0)node{$\bullet$} (-8,0)node{} (0,0)node{$\cdots$};
\draw[\BLUE](-7,0)node[left]{$I$};
\draw[\RED,thick] plot[smooth,tension=1.5] coordinates
{(-8-1,0) (-6-1,1) (-6-1,-1)  (-8-1,0)};
\draw(2.5,1)node[above,\BLUE]{$\gamma_{b}$}(2.5,-1)node[below,\BLUE]{$\gamma_{a}$};
\draw[dashed] (5,2)to(-5,2) (5,-2)to(-5,-2);
\draw[\RED, thick] (4,2)to(4,-2) (-4,-2)to(-4,2);
\draw[\BLUE, thick] (-5,1).. controls +(0:3.9) and +(180:3.9) .. (5,1);
\draw[\BLUE, thick] (-5,-1).. controls +(0:3.9) and +(180:3.9) .. (5,-1);
\end{tikzpicture}
\caption{A punctured intersection}
\label{fig:pint}
\end{figure}

For an intersection $I=(t_1,t_2)\in\p_1$, let $J_I$ be the corresponding int-pair in $H_1^{\m_{\gamma_1},\m_{\rho(\gamma_2)}}\cup H_1^{\m_{\gamma_2},\m_{\rho(\gamma_1)}}$.
Then we have the associated algebra $A_{J_I}=\k[x]/(x^2-x)$ and $N_{(\gamma_a,\kappa_a)}\cong\k_{\kappa_a(t_a)}$ and $N_{\rho(\gamma_b,\kappa_b)}\cong\k_{1-\kappa_b(t_{b})}$ as $A_{J_I}$-modules
(cf. Notation~\ref{notation}), for $\{a,b\}=\{1,2\}$.
Using the formula \eqref{eq:xy}, we have
\[
    \dim_\k\Hom_{{A_{J_I}}}(N_{(\gamma_a,\kappa_a)},N_{\rho(\gamma_b,\kappa_b)})=
    \begin{cases}1,&\text{for $(t_1,t_2)\in\t_1$},\\
        $0$,& \text{for $(t_1,t_2)\notin\t_1$.}
    \end{cases}
\]
Hence, we obtain a consequence of Lemma~\ref{lem:above2}:
\begin{equation}\label{eq:3}
|\t_1|=\sum\limits_{\{a,b\}=\{1,2\}}\sum\limits_{J\in H_1^{\m_{\gamma_a},\m_{\rho(\gamma_b)}}}\dim_\k\Hom_{A_J}(N_{(\gamma_a,\kappa_a)},N_{\rho(\gamma_b,\kappa_b)}).
\end{equation}

Let
\[\p_2=\{(t_1,t_2)\in\p\mid\gamma_1|_{t_1\rightarrow(1-t_1)}\sim\gamma_2|_{t_2\rightarrow(1-t_2)},
    \gamma_1(1-t_1)=\gamma_2(1-t_2)\in\P\}.\]
Observe that for each $(t_1,t_2)\in\p_2$, $(1-t_1,1-t_2)$ is also in $\p_2$.
We call them twin intersections (at punctures).
Clearly, there is at most one pair of twin intersections in $\p_2$.
Let
\[\t_2=\{(t_1,t_2)\in\p_2\mid\kappa_1(t_1)\neq\kappa_2(t_2),\ \kappa_1(1-t_1)\neq\kappa_2(1-t_2)\}.\]
Suppose that there is a (unique) pair of twin intersections $(t_1,t_2)$ and $(1-t_1,1-t_2)$ in $\p_2$.
Then both the endpoints of $\gamma_i$ are in $\P$ and thus $\gamma_i=\rho(\gamma_i)$.
Reversing one of $\gamma_i$ if necessary, assume that $\gamma_1\sim\gamma_2$.
So $\m_{\gamma_1}=\m_{\gamma_2}=\m_{\rho(\gamma_1)}=\m_{\rho(\gamma_2)}$. Then
this pair of twin intersections induces the int-pairs
$J_{1,2}=(\m_{\gamma_1},\m_{\rho(\gamma_2)})$ in $H_2^{\m_{\gamma_1},\m_{\rho(\gamma_2)}}$
and
$J_{2,1}=(\m_{\gamma_2},\m_{\rho(\gamma_1)})$ in $H_2^{\m_{\gamma_2},\m_{\rho(\gamma_1)}}$,
which are the only ones with two punctured letters. In this case, for any $\{a,b\}=\{1,2\}$ we have $A_{J_{a,b}}\cong\k\<x,y\>/(x^2-x,y^2-y)$ and $N_{(\gamma_a,\kappa_a)}\cong\k_{\kappa_a(t_a),\kappa_a(1-t_a)}$ and $N_{\rho(\gamma_b,\kappa_b)}\cong\k_{1-\kappa_b(t_b),1-\kappa_b(1-t_b)}$ as $A_{J_{a,b}}$-modules (cf. Notation~\ref{notation}).
Using the formula \eqref{eq:xy2}, we have
\[
    \dim_\k\Hom_{A_{J_{a,b}}}
    \left(N_{(\gamma_a,\kappa_a)},N_{\rho(\gamma_b,\kappa_b)}\right)=
    \begin{cases}1,&\text{for $(t_1,t_2)\in\t_2$},\\
        $0$,& \text{for $(t_1,t_2)\notin\t_2$.}
    \end{cases}
\]
and hence
\begin{equation}\label{eq:4}
|\t_2|=\sum\limits_{\{a,b\}=\{1,2\}}\sum\limits_{J\in H_2^{\m_{\gamma_a},\m_{\rho(\gamma_b)}}}\dim_\k\Hom_{A_J}(N_{(\gamma_a,\kappa_a)},N_{\rho(\gamma_b,\kappa_b)}).
\end{equation}

\subsection{Summary}

\renewcommand{\arraystretch}{2}

By definition, we have
\begin{equation}\label{eq:2}
|\t\left((\gamma_1,\kappa_1),(\gamma_2,\kappa_2)\right)|=|\t_1|+|\t_2|.
\end{equation}
Combining \eqref{eq:1}, \eqref{eq:3}, \eqref{eq:4} and \eqref{eq:2}, we have
\[\begin{array}{rcl}
\Int\left((\gamma_1,\kappa_1),(\gamma_2,\kappa_2)\right)&=&\sum\limits_{\{a,b\}=\{1,2\}}\sum\limits_{J\in H^{\m_{\gamma_a},\m_{\rho(\gamma_b)}}}\dim_\k\Hom_{A_J}(N_{(\gamma_a,\kappa_a)},N_{\rho(\gamma_b,\kappa_b)})\\
&=&\sum\limits_{\{a,b\}=\{1,2\}}\dim_\k\Hom_{\Lambda^{\T'}}(M^{\T'}_{(\gamma_a,\kappa_a)},M^{\T'}_{\rho(\gamma_b,\kappa_b)})\\
&=&\sum\limits_{\{a,b\}=\{1,2\}}\dim_\k\Hom_{\Lambda^{\T'}}(M^{\T'}_{(\gamma_a,\kappa_a)},\tau M^{\T'}_{(\gamma_b,\kappa_b)})\\
&=&\dim_\k\Ext^1_{\C(\T')}(X^{\T'}_{(\gamma_1,\kappa_1)}, X^{\T'}_{(\gamma_2,\kappa_2)})
\end{array}\]
Here, the second equality follows from Theorem~\ref{thm:G2},
the third one follows from the fact that $M'_{\rho(\gamma_b,\kappa_b)}=\tau M'_{\gamma_b,\kappa_b}$ (Theorem~\ref{thm:T-rotation})
and the last one follows from \cite[Lemma 3.3]{Pa}.
This finishes the proof.

\appendix
\section{Admissible triangulations}\label{app:ex}
In this section, we give some results about admissible triangulations which will be used in the paper.

\begin{lemma}\label{lem:ex}
There is an admissible triangulation,  i.e. every puncture is in a self-folded triangle,
of any marked surface with non-empty boundary.
\end{lemma}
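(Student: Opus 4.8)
The plan is to build an admissible triangulation by hand: exhibit one self-folded triangle around each puncture, and then invoke \cite{FST} to complete the resulting non-crossing family to a full ideal triangulation. Throughout I assume $\surf$ is not the once-punctured monogon, which is excluded by the standing convention after Definition~\ref{def:triangulation}. If $\P=\emptyset$ there is nothing to do (any ideal triangulation is vacuously admissible), so write $\P=\{P_1,\dots,P_p\}$ with $p\geq1$.

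First I would choose, for each $i$, an embedded arc $\alpha_i$ from $P_i$ to a marked point $M_i\in\M$ on the boundary, with interior in $\surf\setminus(\partial\surf\cup\P)$, the $\alpha_i$ being pairwise disjoint apart from possibly sharing endpoints in $\M$. This is possible because $\surf$ is connected with non-empty boundary and every boundary component carries a marked point: build the $\alpha_i$ one at a time, sliding the boundary endpoint to a marked point and cutting along thin neighbourhoods of the previously chosen arcs. Next, for each $i$ I would take a thin enclosing loop $\beta_i$ based at $M_i$ bounding an embedded disc $D_i$ with $D_i\cap\partial\surf=\{M_i\}$, chosen so that $P_i\in\operatorname{int}D_i$, $\alpha_i\subset D_i$, and the $D_i$ are pairwise disjoint apart from shared points in $\M$ and disjoint from the $\alpha_j$ for $j\neq i$; taking $D_i$ to be a sufficiently thin regular neighbourhood of $\alpha_i\cup\{P_i\}$, which near a marked point shared by several of the chosen arcs we narrow to a thin angular sector, makes all these disjointness conditions hold simultaneously. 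Each $D_i$ traps the puncture $P_i$, so $\alpha_i$ and $\beta_i$ are genuine curves in $\CS$ (neither is null-homotopic nor homotopic to a boundary segment), and by construction $\Int(\gamma,\gamma')=0$ for any two curves $\gamma,\gamma'$ among $\{\alpha_i,\beta_i\}_{i}$. By \cite{FST} this non-crossing family extends to an ideal triangulation $\T$ of $\surf$, obtained by adding non-crossing curves until the family is maximal.

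It remains to check that $\T$ is admissible. The key step is a locality argument: if $e\in\T$ has a representative meeting $\operatorname{int}D_i$, then $e$ is non-crossing with $\beta_i=\partial D_i\in\T$, hence has a representative contained in $\overline{D_i}$, so its endpoints lie in $(\M\cup\P)\cap\overline{D_i}=\{M_i,P_i\}$; but in the disc $D_i$ carrying the single interior puncture $P_i$ and the single boundary marked point $M_i$, the only homotopy classes of essential curves are that of $\beta_i$ (the loop around $P_i$) and that of $\alpha_i$ (the radius to $P_i$). Therefore the curves of $\T$ meeting $\operatorname{int}D_i$ are exactly $\alpha_i$ and $\beta_i$, i.e. $\T$ restricted to $D_i$ is a self-folded triangle with remaining side $\beta_i$ and folded side $\alpha_i$, and $P_i$ is contained in it. Since $i$ was arbitrary, every puncture lies in a self-folded triangle of $\T$, so $\T$ is admissible.

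The difficulty here is not conceptual but organisational. The two places that need real care are: (a) producing the arcs $\alpha_i$ and the discs $D_i$ pairwise disjoint, the only genuine subtlety being several of them emanating from the same marked point, handled by ordering these arcs angularly at that point and taking thin sectors; and (b) the last-paragraph identification of the curves confined to $D_i$, which relies on $D_i$ being a disc with exactly one interior puncture and one boundary marked point. I would also note, as a sanity check, the degenerate low-rank case explaining the exclusion: for the once-punctured monogon the ``remaining side'' of the self-folded triangle would have to be a boundary segment rather than an interior arc, so the above construction does not apply --- consistently with that case being set aside by convention.
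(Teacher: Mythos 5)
Your proposal is correct, but it is organized differently from the paper's argument. The paper proceeds by induction on the rank $n$: it picks one puncture $P$, joins it to a marked point $M$, takes the enclosing loop $\alpha$ based at $M$, and cuts along $\alpha$ (as in Figure~\ref{fig:2}), splitting off a single self-folded triangle and leaving a surface of rank $n-2$ to which the inductive hypothesis applies. You instead install the once-punctured-monogon configuration $(\alpha_i,\beta_i)$ around every puncture simultaneously, complete the resulting compatible collection to an ideal triangulation (with the paper's definition of an ideal triangulation as a maximal compatible collection this completion is immediate), and then check admissibility by the locality argument that no further arc of the completed triangulation enters $D_i$ --- the step your route genuinely requires and correctly supplies, since an arc with $\Int(-,\beta_i)=0$ meeting the interior of $D_i$ must lie in the once-punctured monogon $\overline{D_i}$, whose only essential arcs up to isotopy are $\alpha_i$ and $\beta_i$, so the face containing $P_i$ is forced to be the self-folded triangle. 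The geometric core is the same in both proofs (surround each puncture by a loop based at a marked point, whose inside can only be triangulated as a self-folded triangle); the paper's cutting-plus-induction replaces your completion-and-locality step by rank bookkeeping, while your version avoids induction at the cost of arranging the arcs $\alpha_i$ and discs $D_i$ to be pairwise disjoint and classifying arcs inside each $D_i$. Both yield complete proofs, and your concluding remark on the once-punctured monogon is consistent with the paper's convention of treating that case separately.
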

\begin{proof}
If the surface $\surf$ does not admit any puncture, then any triangulation is admissible. Now let $P$ be a puncture in $\surf$. Use induction on the rank $n$,
staring from the trivial case when $n=1$ and $\surf$ is a once-punctured monogon.
Now suppose $n\geq2$.
Consider the curve $\alpha$ with $\Int(\alpha,\alpha)=0$ and $\alpha(0)=\alpha(1)=M$, which encloses a disk with the puncture $P$.
Cutting along $\alpha$
we obtain a surface $\surf/\alpha$ (cf. the second column of Figure~\ref{fig:5c}) whose rank is $n-1$.
By inductive assumption we deduce that $\surf/\alpha$, and hence $\surf$
admits an admissible triangulation.
\end{proof}

\begin{lemma}\label{lem:loz-conn}
Any two admissible triangulations are connected by a sequence of $\lozenge$-flips.
\end{lemma}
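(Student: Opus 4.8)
The plan is to reduce the connectivity statement for $\lozenge$-flips to the known connectivity of the exchange graph $\EGT(\surf)$ of tagged triangulations under ordinary tagged flips, which is \cite[Proposition~7.10]{FST}. First I would recall that each admissible triangulation $\T$ has an associated tagged triangulation $\T^\times$ obtained by replacing each loop enclosing a puncture (a remaining side of a self-folded triangle) by the tagged curve $(\alpha,\kappa)$ with $\kappa\equiv 1$ on the enclosed curve $\alpha$, and each folded side just disappears in favour of its tagged partner; see the discussion around Figure~\ref{fig:self-folded}. The key observations to establish are: (a) the assignment $\T\mapsto\T^\times$ is a \emph{surjection} from admissible triangulations onto all tagged triangulations of $\surf$, and (b) an ordinary tagged flip of $\T^\times$ at a tagged curve corresponds, at the level of admissible triangulations, to a single $\lozenge$-flip of $\T$ (an ordinary flip when the flipped curve is not a remaining side, and the composite double flip inside a once-punctured digon when it is — exactly the two cases recorded after the definition of $\lozenge$-flip, cf. Figure~\ref{fig:lozenge}).

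Concretely, the steps I would carry out are as follows. Step 1: Show $\T\mapsto\T^\times$ hits every tagged triangulation. Given a tagged triangulation $\sigma$, at each puncture $P$ all the tagged arcs incident to $P$ carry the same tag, and after possibly applying the tag-changing involution at $P$ we may arrange that all arcs at every puncture are tagged plain; such a triangulation is (by \cite{FST}) the tagged version of an ideal triangulation, which we may further modify so that each puncture sits inside a self-folded triangle — i.e. it comes from an admissible triangulation. (The required existence/modification statements are Lemma~\ref{lem:ex} and the properties of admissible triangulations collected in Appendix~\ref{app:ex}.) Step 2: Check that if $\sigma' = f_\gamma(\sigma)$ is a tagged flip of $\sigma = \T^\times$, then there is an admissible triangulation $\T'$ with $\T'^\times = \sigma'$ and $\T'$ obtained from $\T$ by one $\lozenge$-flip. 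This is a local computation: when $\gamma$ is not a remaining side the flip is an ordinary flip of $\T$ which stays admissible; when $\gamma$ is a remaining side, flipping $\gamma^\times$ changes the tag at the enclosed puncture, and to return to an admissible triangulation one performs the two ordinary flips inside the once-punctured digon — precisely the definition of the $\lozenge$-flip $f_i(\T)$. Step 3: Conclude. Given two admissible triangulations $\T_1, \T_2$, the tagged triangulations $\T_1^\times, \T_2^\times$ are connected by a sequence of ordinary tagged flips by \cite[Proposition~7.10]{FST}; lifting each such flip via Step 2 produces a sequence of $\lozenge$-flips joining $\T_1$ to (an admissible triangulation with the same tagged version as) $\T_2$, and since two admissible triangulations with the same tagged version differ only in the decoration data at punctures already encoded by the $\lozenge$-flip structure, one last adjustment (or a direct argument that $\T\mapsto\T^\times$ is in fact a bijection on admissible triangulations) finishes the proof.

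The main obstacle I anticipate is making Step 2 — and in particular the ``return to admissibility'' after a tag-changing flip — genuinely local and case-free. One must verify that the combination of two ordinary flips inside a once-punctured digon is itself a well-defined move on the whole surface (it does not interact with the rest of the triangulation) and that the result is again admissible, i.e. every puncture is still contained in a self-folded triangle; this requires checking that the digon in question really is once-punctured and bounded by arcs of the ambient admissible triangulation, which is where the hypothesis that $\surf$ has nonempty boundary and the explicit local pictures (Figure~\ref{fig:self-folded}, Figure~\ref{fig:lozenge}) do the work. A secondary subtlety is the bookkeeping at punctures where more than one arc is incident: one should note that in an admissible triangulation each puncture is incident only to the folded side of its self-folded triangle, so the decoration ambiguity at punctures is exactly captured by the choice of which adjacent arc to treat as ``remaining side,'' and this is resolved uniformly by the $\lozenge$-flip definition. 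Once these local verifications are in place, the global argument is a routine lift of a connectivity statement along the surjection $\T\mapsto\T^\times$.
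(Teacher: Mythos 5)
Your reduction to \cite[Proposition~7.10]{FST} has a genuine gap, and it sits exactly where the two moves you are comparing fail to match up. The map $\T\mapsto\T^\times$ is \emph{not} surjective onto the set of tagged triangulations: for an admissible $\T$, every puncture of $\T^\times$ is incident to exactly two tagged arcs with the \emph{same} underlying curve and opposite tags (the tagged digon coming from a self-folded triangle), whereas a general tagged triangulation may have a puncture of valence $\geq 3$ with all arcs tagged plain (the tagged version of a non-admissible ideal triangulation). Your Step~1 does not prove surjectivity; it modifies the given $\sigma$ into a different tagged triangulation lying in the image, and those modifications are themselves flips that would need to be accounted for. Correspondingly, Step~2 fails as stated: a single tagged flip of $\T^\times$ at one of the two arcs of a tagged digon destroys the digon and lands \emph{outside} the image of $\times$, so it does not correspond to any $\lozenge$-flip. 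The $\lozenge$-flip at a remaining side is a \emph{composition of two} tagged flips, and an admissible triangulation admits only $n-p$ many $\lozenge$-flips versus $n$ tagged flips of $\T^\times$; the $\lozenge$-flip graph is therefore not (even locally) identified with $\EGT(\surf)$. Consequently a path in $\EGT(\surf)$ joining $\T_1^\times$ to $\T_2^\times$ may pass through tagged triangulations that are not of the form $\T^\times$, and it cannot be lifted edge-by-edge; showing that such a path can be pushed back into the admissible locus is essentially the content of the lemma, not a routine consequence of \cite{FST}.

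The paper avoids this entirely by a direct induction on the rank $n$. Fix a puncture $P$; in each $\T_i$ it is joined to a unique marked point $M_i$ by the folded side. If $PM_1\sim PM_2$, delete the two self-folded triangles and induct on the smaller surface. If not, freeze the self-folded triangle of $\T_1$ at $P$, use the inductive hypothesis on the complement to bring $\T_1$ into the local configuration of Figure~\ref{fig:lozenge}, and then a single $\lozenge$-flip replaces $PM_1$ by $PM_2$, reducing to the first case. If you want to salvage your approach, you would need to prove the intermediate statement that any two tagged triangulations in the image of $\times$ are connected within the image by pairs of flips of the digon type together with ordinary flips --- which is no easier than the inductive argument above.
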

\begin{proof}
Let $\T_i$, $i=1,2$, be two admissible triangulations.
Use induction on the rank $n$ of the marked surface $\surf$,
starting with the trivial case when $n=1$.

Consider a puncture $P$, which is connected to exactly one marked point $M_i$ in $\T_i$.
If $PM_1\sim PM_2$, we can delete the self-folded triangles containing $P$ from $\T_i$
and reduce to the case with a smaller rank.

Now suppose that $PM_1\nsim PM_2$.
Frozen the self-folded triangle in $\T_1$ containing $PM_1$.
By inductive assumption for the remaining surface,
we can flip $\T_1$ to a triangulation $\T_1'$,
with local picture as in the left picture of Figure~\ref{fig:lozenge}
with $A=M_2$, $B=M_1$ and the curve $PA\sim PM_2$.
Then by one $\lozenge$-flip we can locally flip $\T_1'$ to another triangulation $\T_1''$
such that it contains the curve $PM_2$ in $\T_2$,
which becomes the $PM_1\sim PM_2$ case above.

\begin{figure}[ht]\centering
\begin{tikzpicture}[scale=.5]
\draw[thick] (0,0.2) circle (2.2)(0,1) to (0,-2);
\draw (0,1) node {$\bullet$}node[right]{$P$} (0,-2) node {$\bullet$} node[below]{$B$};
\draw (4,1) node {$\Longrightarrow$};
\draw[thick] (0,1) circle (3);
\draw (0,4)node{$\bullet$}node[above]{$A$};
\end{tikzpicture}\quad
\begin{tikzpicture}[scale=.5,rotate=180]
\draw[thick] (0,0.2) circle (2.2) (0,1)to (0,-2);
\draw (0,1) node {$\bullet$}node[right]{$P$} (0,-2) node {$\bullet$}node[above]{$A$};
\draw[thick] (0,1) circle (3);
\draw (0,4)node{$\bullet$}node[ below]{$B$};
\end{tikzpicture}
\caption{A $\lozenge$-flip}
\label{fig:lozenge}
\end{figure}
\end{proof}

\section{Explicit version of DWZ's mutations of decorated representations for biquivers with potential}\label{app:DWZ}
\begin{table}[htbp]
\caption{The first type of $\lozenge$-flips}\label{table1}
\centering
\begin{tabular}{c||c|c}
   &
    \begin{tikzpicture}[scale=.25]

    \draw[very thick]plot [smooth,tension=1] coordinates {(-120:6) (-95:4.5) (-60:6)};
    \draw[very thick]plot [smooth,tension=1] coordinates {(110:5) (85:4) (70:5)};
    \draw[very thick]plot [smooth,tension=1] coordinates {(30:5) (10:4) (-10:5)};
    \draw[very thick]plot [smooth,tension=1] coordinates {(150:5) (170:4) (185:5)};
    \draw[\RED,thick](-95:4.5)to[bend left=23](10:4)to[bend left](85:4)to(-95:4.5)
    to[bend right](170:4)to[bend right](85:4);
    \draw[<-,>=stealth,bend left=10,\BLUE](.9,2.65)to(0.25,2.4);
    \draw(0,1.1)node[above right]{$_{_{{\beta_2}}}$};
    \draw[<-,>=stealth,bend left=10,\BLUE](0.2,2.4)to(-0.5,2.5);
    \draw(-.27,1.05)node[above]{\tiny{$_{_{{\alpha_1}}}$}};
    \draw[<-,>=stealth,bend left=10,\BLUE](-1,-2)to(-.25,-2+.2);
    \draw(-.45,-.5)node[below]{$_{_{{\beta_1}^{}}}$};
    \draw[<-,>=stealth,bend left=10,\BLUE](-.1,-1.5-.3)to(0.75,-1.7-.3);
    \draw(.63,-.58)node[below]{$_{_{{\alpha_2}^{}}}$};
    \draw[->,>=stealth,bend left=-10,\BLUE](-2.55,-.1)to(-2.55,1);
    \draw(-1.75,.84)node[below]{$_{_{{\gamma_1}^{}}}$};
    \draw[<-,>=stealth,bend left=10,\BLUE](2.6,-0)to(2.6,1.1);
    \draw(1.99,1.2)node[below]{$_{_{{\gamma_2}^{}}}$};
    \draw[thick](-95:4.5)node{$\bullet$}(85:4)node{$\bullet$}
        (10:4)node{$\bullet$}(170:4)node{$\bullet$};
    \draw(-.27,-0.3)node[\RED,above]{$i$};
    \end{tikzpicture}
&
    \begin{tikzpicture}[scale=.25]

    \draw[very thick]plot [smooth,tension=1] coordinates {(-120:6) (-95:4.5) (-60:6)};
    \draw[very thick]plot [smooth,tension=1] coordinates {(110:5) (85:4) (70:5)};
    \draw[very thick]plot [smooth,tension=1] coordinates {(30:5) (10:4) (-10:5)};
    \draw[very thick]plot [smooth,tension=1] coordinates {(150:5) (170:4) (185:5)};
    \draw[\RED,thick](-95:4.5)to[bend left=23](10:4)to[bend left](85:4)(-95:4.5)
    to[bend right](170:4)to[bend right](85:4)   (170:4)to[bend right=10](10:4);

    \draw[->,>=stealth,bend left=-10,\BLUE](-2,-.56)to(-2,.35);
    \draw[->,>=stealth,bend left=-10,\BLUE](-2,.45)to(-2,1.35);
    \draw(-1.1,1.8)node[below]{$_{_{\alpha_1^{*}}}$}
        (-1.25,.44)node[below]{$_{_{\beta_1^{*}}}$};
    \draw[<-,>=stealth,bend left=10,\BLUE](2,-0.5)to(2,.35);
    \draw[<-,>=stealth,bend left=10,\BLUE](2,.45)to(2,1.45);
    \draw(1.1,1.8)node[below]{$_{_{\beta_2^{*}}}$}
        (1.25,.44)node[below]{$_{_{\alpha_2^{*}}}$};

    \draw[thick](-95:4.5)node{$\bullet$}(85:4)node{$\bullet$}
        (10:4)node{$\bullet$}(170:4)node{$\bullet$};
    \draw(2,-2)node[below]{\tiny{$_{_{_{{[\beta_1\alpha_2]}}}}$}};
    \draw[<-,>=stealth,bend left=10,\BLUE](-.8,-2.3)to(0.55,-2.3);
    \draw(-3.8,2.1)node[above right]{$_{_{{[\beta_2\alpha_1]}}}$};
    \draw[<-,>=stealth,bend left=10,\BLUE](.9,2.65)to(-0.5,2.5);
    \end{tikzpicture}
\\\hline
&$\xymatrix@R=.8pc@C=2.5pc{
\cdot\ar[dr]|{\alpha_1}&&\cdot\ar[dd]|{\gamma_2}\\
    & i\ar@{<-}[dr]|{\alpha_2}\ar[ur]|{\beta_2} \\
    \ar@{<-}[ur]|{\beta_1}\ar[uu]|{\gamma_1}\cdot&&\cdot}$
&$\xymatrix@R=.8pc@C=2.5pc{
\cdot\ar[rr]|{^{[\beta_2\alpha_1]}}\ar@{<-}[dr]|{\alpha_1^*}
    &&\cdot\\&i\ar[dr]|{\alpha_2^*}\ar@{<-}[ur]|{\beta_2^*} \\  \cdot\ar[ur]|{\beta_1^*}&&\cdot\ar[ll]|{^{[\beta_1\alpha_2]}}}$
\\\hline
    \begin{tikzpicture}[xscale=.1,yscale=.09]

    \draw[ ]plot [smooth,tension=1] coordinates {(-120:6) (-95:4.5) (-60:6)};
    \draw[ ]plot [smooth,tension=1] coordinates {(110:5) (85:4) (70:5)};
    \draw[ ]plot [smooth,tension=1] coordinates {(30:5) (10:4) (-10:5)};
    \draw[ ]plot [smooth,tension=1] coordinates {(150:5) (170:4) (185:5)};
    \draw[\RED](-95:4.5)to[bend left=23](10:4)to[bend left](85:4)(-95:4.5)
    to[bend right](170:4)to[bend right](85:4);
    \draw[](-95:4.5)node{$.$}(85:4)node{$.$}
        (10:4)node{$.$}(170:4)node{$.$};
    \draw[bend left=10,\BLUE](1,2.5)to(-0.5,2.5);
    \end{tikzpicture}
&
\begin{tikzpicture}[xscale=.1,yscale=.08]
    \draw(-7,4)node(a){$\k$}(7,4)node(b){$\k$}(0,0)node(c){$\k$}
        (-7,-4)node(d){0}(7,-4)node(e){0};
        \draw(-3.5,3.7)node{\tiny{1}}(3.1,3.7)node{\tiny{1}};
        \draw[->,>=stealth](d)to(a);\draw[->,>=stealth](c)to(b);\draw[->,>=stealth](a)to(c);
        \draw[->,>=stealth](e)to(c);\draw[->,>=stealth](c)to(d);\draw[->,>=stealth](b)to(e);
    \end{tikzpicture}
&
\begin{tikzpicture}[xscale=.1,yscale=.08]
    \draw(-7,4)node(a){$\k$}(7,4)node(b){$\k$}(0,0)node(c){0}
        (-7,-4)node(d){0}(7,-4)node(e){0};
        \draw(0,5)node{\tiny{1}};
        \draw[->,>=stealth](a)to(b);\draw[->,>=stealth](b)to(c);\draw[->,>=stealth](c)to(a);
        \draw[->,>=stealth](d)to(c);\draw[->,>=stealth](c)to(e);\draw[->,>=stealth](e)to(d);
    \end{tikzpicture}
\\\hline
    \begin{tikzpicture}[xscale=.1,yscale=.09]
    \draw[bend left,\BLUE](10:4)to(-0.5,2.5);
    \draw[ ]plot [smooth,tension=1] coordinates {(-120:6) (-95:4.5) (-60:6)};
    \draw[ ]plot [smooth,tension=1] coordinates {(110:5) (85:4) (70:5)};
    \draw[ ]plot [smooth,tension=1] coordinates {(30:5) (10:4) (-10:5)};
    \draw[ ]plot [smooth,tension=1] coordinates {(150:5) (170:4) (185:5)};
    \draw[\RED](-95:4.5)to[bend left=23](10:4)to[bend left](85:4)(-95:4.5)
    to[bend right](170:4)to[bend right](85:4);
    \draw[](-95:4.5)node{$.$}(85:4)node{$.$}
        (10:4)node{$.$}(170:4)node{$.$};
    \end{tikzpicture}
&
\begin{tikzpicture}[xscale=.1,yscale=.08]
    \draw(-7,4)node(a){$\k$}(7,4)node(b){0}(0,0)node(c){$\k$}
        (-7,-4)node(d){0}(7,-4)node(e){0};
        \draw(-3.5,3.7)node{\tiny{1}};
        \draw[->,>=stealth](d)to(a);\draw[->,>=stealth](c)to(b);\draw[->,>=stealth](a)to(c);
        \draw[->,>=stealth](e)to(c);\draw[->,>=stealth](c)to(d);\draw[->,>=stealth](b)to(e);
    \end{tikzpicture}
&
\begin{tikzpicture}[xscale=.1,yscale=.08]
    \draw(-7,4)node(a){$\k$}(7,4)node(b){0}(0,0)node(c){0}
        (-7,-4)node(d){0}(7,-4)node(e){0};
        \draw[->,>=stealth](a)to(b);\draw[->,>=stealth](b)to(c);\draw[->,>=stealth](c)to(a);
        \draw[->,>=stealth](d)to(c);\draw[->,>=stealth](c)to(e);\draw[->,>=stealth](e)to(d);
    \end{tikzpicture}
\\\hline
    \begin{tikzpicture}[xscale=.1,yscale=.09]
    \draw[bend left=-5,\BLUE](1.2,-1.5)to(-0.5,2.5);
    \draw[ ]plot [smooth,tension=1] coordinates {(-120:6) (-95:4.5) (-60:6)};
    \draw[ ]plot [smooth,tension=1] coordinates {(110:5) (85:4) (70:5)};
    \draw[ ]plot [smooth,tension=1] coordinates {(30:5) (10:4) (-10:5)};
    \draw[ ]plot [smooth,tension=1] coordinates {(150:5) (170:4) (185:5)};
    \draw[\RED](-95:4.5)to[bend left=23](10:4)to[bend left](85:4)(-95:4.5)
    to[bend right](170:4)to[bend right](85:4);
    \draw[](-95:4.5)node{$.$}(85:4)node{$.$}
        (10:4)node{$.$}(170:4)node{$.$};
    \end{tikzpicture}
&
\begin{tikzpicture}[xscale=.1,yscale=.08]
    \draw(-7,4)node(a){$\k$}(7,4)node(b){0}(0,0)node(c){$\k$}
        (-7,-4)node(d){0}(7,-4)node(e){$\k$};
        \draw(-3.5,3.7)node{\tiny{1}}(3.1,-3.7)node{\tiny{1}};
        \draw[->,>=stealth](d)to(a);\draw[->,>=stealth](c)to(b);\draw[->,>=stealth](a)to(c);
        \draw[->,>=stealth](e)to(c);\draw[->,>=stealth](c)to(d);\draw[->,>=stealth](b)to(e);
    \end{tikzpicture}
&
\begin{tikzpicture}[xscale=.1,yscale=.08]
    \draw(-7,4)node(a){$\k$}(7,4)node(b){0}(0,0)node(c){$\k$}
        (-7,-4)node(d){0}(7,-4)node(e){$\k$};
        \draw(-3.5,0.7)node{\tiny{1}}(3.1,-0.7)node{\tiny{1}};
        \draw[->,>=stealth](a)to(b);\draw[->,>=stealth](b)to(c);\draw[->,>=stealth](c)to(a);
        \draw[->,>=stealth](d)to(c);\draw[->,>=stealth](c)to(e);\draw[->,>=stealth](e)to(d);
    \end{tikzpicture}
\\\hline
    \begin{tikzpicture}[xscale=-.1,yscale=.09]
    \draw[bend left,\BLUE](10:4)to(-0.5,2.5);
    \draw[ ]plot [smooth,tension=1] coordinates {(-120:6) (-95:4.5) (-60:6)};
    \draw[ ]plot [smooth,tension=1] coordinates {(110:5) (85:4) (70:5)};
    \draw[ ]plot [smooth,tension=1] coordinates {(30:5) (10:4) (-10:5)};
    \draw[ ]plot [smooth,tension=1] coordinates {(150:5) (170:4) (185:5)};
    \draw[\RED](-95:4.5)to[bend left=23](10:4)to[bend left](85:4)(-95:4.5)
    to[bend right](170:4)to[bend right](85:4);
    \draw[](-95:4.5)node{$.$}(85:4)node{$.$}
        (10:4)node{$.$}(170:4)node{$.$};
    \end{tikzpicture}
&
\begin{tikzpicture}[xscale=.1,yscale=.08]
    \draw(-7,4)node(a){0}(7,4)node(b){$\k$}(0,0)node(c){$\k$}
        (-7,-4)node(d){0}(7,-4)node(e){0};
        \draw(3.1,3.7)node{\tiny{1}};
        \draw[->,>=stealth](d)to(a);\draw[->,>=stealth](c)to(b);\draw[->,>=stealth](a)to(c);
        \draw[->,>=stealth](e)to(c);\draw[->,>=stealth](c)to(d);\draw[->,>=stealth](b)to(e);
    \end{tikzpicture}
&
\begin{tikzpicture}[xscale=.1,yscale=.08]
    \draw(-7,4)node(a){0}(7,4)node(b){$\k$}(0,0)node(c){0}
        (-7,-4)node(d){0}(7,-4)node(e){0};
        \draw[->,>=stealth](a)to(b);\draw[->,>=stealth](b)to(c);\draw[->,>=stealth](c)to(a);
        \draw[->,>=stealth](d)to(c);\draw[->,>=stealth](c)to(e);\draw[->,>=stealth](e)to(d);
    \end{tikzpicture}
\\\hline
    \begin{tikzpicture}[xscale=-.1,yscale=.09]
    \draw[bend left=-5,\BLUE](1.2,-1.5)to(-0.5,2.5);
    \draw[ ]plot [smooth,tension=1] coordinates {(-120:6) (-95:4.5) (-60:6)};
    \draw[ ]plot [smooth,tension=1] coordinates {(110:5) (85:4) (70:5)};
    \draw[ ]plot [smooth,tension=1] coordinates {(30:5) (10:4) (-10:5)};
    \draw[ ]plot [smooth,tension=1] coordinates {(150:5) (170:4) (185:5)};
    \draw[\RED](-95:4.5)to[bend left=23](10:4)to[bend left](85:4)(-95:4.5)
    to[bend right](170:4)to[bend right](85:4);
    \draw[](-95:4.5)node{$.$}(85:4)node{$.$}
        (10:4)node{$.$}(170:4)node{$.$};
    \end{tikzpicture}
&
\begin{tikzpicture}[xscale=.1,yscale=.08]
    \draw(-7,4)node(a){0}(7,4)node(b){$\k$}(0,0)node(c){$\k$}
        (-7,-4)node(d){$\k$}(7,-4)node(e){0};
        \draw(-3.5,-3.7)node{\tiny{1}}(3.1,3.7)node{\tiny{1}};
        \draw[->,>=stealth](d)to(a);\draw[->,>=stealth](c)to(b);\draw[->,>=stealth](a)to(c);
        \draw[->,>=stealth](e)to(c);\draw[->,>=stealth](c)to(d);\draw[->,>=stealth](b)to(e);
    \end{tikzpicture}
&
\begin{tikzpicture}[xscale=.1,yscale=.08]
    \draw(-7,4)node(a){0}(7,4)node(b){$\k$}(0,0)node(c){$\k$}
        (-7,-4)node(d){$\k$}(7,-4)node(e){0};
        \draw(-3.5,-0.7)node{\tiny{1}}(3.5,0.7)node{\tiny{1}};
        \draw[->,>=stealth](a)to(b);\draw[->,>=stealth](b)to(c);\draw[->,>=stealth](c)to(a);
        \draw[->,>=stealth](d)to(c);\draw[->,>=stealth](c)to(e);\draw[->,>=stealth](e)to(d);
    \end{tikzpicture}
\\\hline
    \begin{tikzpicture}[xscale=.1,yscale=.09]
    \draw[bend left=-5,\BLUE](170:4)to(10:4);
    \draw[ ]plot [smooth,tension=1] coordinates {(-120:6) (-95:4.5) (-60:6)};
    \draw[ ]plot [smooth,tension=1] coordinates {(110:5) (85:4) (70:5)};
    \draw[ ]plot [smooth,tension=1] coordinates {(30:5) (10:4) (-10:5)};
    \draw[ ]plot [smooth,tension=1] coordinates {(150:5) (170:4) (185:5)};
    \draw[\RED](-95:4.5)to[bend left=23](10:4)to[bend left](85:4)(-95:4.5)
    to[bend right](170:4)to[bend right](85:4);
    \draw[](-95:4.5)node{$.$}(85:4)node{$.$}
        (10:4)node{$.$}(170:4)node{$.$};
    \end{tikzpicture}
&
\begin{tikzpicture}[xscale=.1,yscale=.08]
    \draw(-7,4)node(a){0}(7,4)node(b){$0$}(0,0)node(c){$\k$}
        (-7,-4)node(d){$0$}(7,-4)node(e){0};
        \draw[->,>=stealth](d)to(a);\draw[->,>=stealth](c)to(b);\draw[->,>=stealth](a)to(c);
        \draw[->,>=stealth](e)to(c);\draw[->,>=stealth](c)to(d);\draw[->,>=stealth](b)to(e);
    \end{tikzpicture}
&
\begin{tikzpicture}[xscale=.1,yscale=.12]
    \draw(-7,4)node(a){ }(7,4)node(b){ }(0,4)node(c){$M'=0$}
        (0,0)node(d){$V_j'=\delta_{ij}\k$};
    \end{tikzpicture}
\\\hline
\end{tabular}
\end{table}

\begin{table}[htbp]
\caption{The second type of $\lozenge$-flips}\label{table2}
\centering
\begin{tabular}{c||c|c}
&
    \begin{tikzpicture}[xscale=.35,yscale=.25,rotate=180]
    \draw[very thick]plot [smooth,tension=1] coordinates {(-110:5) (-95:4.5) (-70:5)};
    \draw[very thick]plot [smooth,tension=1] coordinates {(115:5) (85:4) (65:5)};
    \draw[\RED,thick](-95:4.5)to[bend left=60](85:4)
        (-95:4.5)to[bend right=60](85:4) (-95:4.5)to(0,-1);

    \draw[\RED, thick](-95:4.5)
    .. controls +(60:6) and +(100:7) .. (-95:4.5);
    \draw[<-,bend left=10,\BLUE,>=stealth]
        (-1.3,-3.5)to(-.6,-3.2);\draw(-1,-2.7)node{$_{\beta}$};
    \draw[<-,bend left=10,\BLUE,>=stealth]
        (.3,-3.2)to(.8,-3.5);\draw(1,-2.7)node{$_{\alpha}$};
    \draw[->,bend left=-10,\BLUE,>=stealth]
        (-.8,3.2)to(1.1,3.2);\draw(0.2,2.7)node{$_{\gamma}$};
    \draw[very thin](-95:4.5)node{$\bullet$} (85:4)node{$\bullet$}(0,-1)node{$\bullet$};
    \draw(-.27,-0.3)node[\RED,below]{$i$}(0.2,-3)node[\RED,below]{$\overline{i}$};
    \end{tikzpicture}
&
    \begin{tikzpicture}[xscale=.35,yscale=.25,rotate=0]
    \draw[very thick]plot [smooth,tension=1] coordinates {(-113:5.24) (-95:4.5) (-67:5.4)};
    \draw[very thick]plot [smooth,tension=1] coordinates {(110:5) (85:4) (70:5)};
    \draw[\RED,thick](-95:4.5)to[bend left=60](85:4)
        (-95:4.5)to[bend right=60](85:4) (-95:4.5)to(0,-1);

    \draw[\RED, thick](-95:4.5)
    .. controls +(60:6) and +(100:7) .. (-95:4.5);
    \draw[->,bend left=10,\BLUE,>=stealth]
        (-.6,-3.2)to(-1.3,-3.5);\draw(-1.25,-2.7)node{$\;_{_{\alpha^*}}$};
    \draw[->,bend left=10,\BLUE,>=stealth]
        (.8,-3.5)to(.3,-3.2);\draw(.85,-2.7)node{$\;_{_{\beta^*}}$};
    \draw[->,bend left=-10,\BLUE,>=stealth]
        (-.8,3.2)to(1.1,3.2);\draw(0.2,2.5)node{$_{[\beta\alpha]}$};
    \draw[very thin](-95:4.5)node{$\bullet$} (85:4)node{$\bullet$}(0,-1)node{$\bullet$};
    \end{tikzpicture}
\\\hline
&$\xymatrix@R=.8pc@C=1.5pc{
&i\ar[dr]|{\beta}\ar@{<-}[dl]|{\alpha}\ar@(ur,ul)[]|{\varepsilon_i}\\
\cdot\ar@{<-}[rr]|\gamma&&\cdot}$
&$\xymatrix@R=.8pc@C=1.5pc{
&i\ar@{<-}[dr]|{\beta^*}\ar[dl]|{\alpha^*}\ar@(ur,ul)[]|{\varepsilon_i}\\
\cdot\ar[rr]|{[\beta\alpha]}&&\cdot}$
\\\hline
\begin{tikzpicture}[xscale=.12,yscale=.1,rotate=180]
    \draw[\RED](-95:4.5)to[bend left=60](85:4)
        (-95:4.5)to[bend right=60](85:4);
    \draw[]plot [smooth,tension=1] coordinates {(-110:6) (-95:4.5) (-70:6)};
    \draw[]plot [smooth,tension=1] coordinates {(120:5) (85:4) (60:5)};
    \draw[very thin](-95:4.5)node{$.$} (85:4)node{$.$}(0,0)node{\Large{$.$}};
    \draw[bend left=-10,\BLUE](-1.1,2.7)to(1.5,2.7);
    \end{tikzpicture}
&
\begin{tikzpicture}[xscale=.1,yscale=.1]
    \draw(-7,-5)node(a){$\k$}(7,-5)node(b){$\k$}
        (0,0)node(c){0};
    \draw[<-,>=stealth](c)to(a);\draw[<-,>=stealth](a)to(b);\draw[<-,>=stealth](b)to(c);
    \draw[<-,>=stealth,dashed](120:2).. controls +(135:8) and +(60:8) .. (45:2);
    \draw(0,-5)node[below]{\tiny{1}};
    \end{tikzpicture}
&
\begin{tikzpicture}[xscale=.1,yscale=.1]
    \draw(-8,-5)node(a){$\k$}(8,-5)node(b){$\k$}
        (0,0)node(c){$\k^2$};
    \draw[<-,>=stealth](a)to(c);\draw[<-,>=stealth](b)to(a);\draw[<-,>=stealth](c)to(b);
    \draw[<-,>=stealth,dashed](120:2).. controls +(135:8) and +(60:8) .. (45:2);
    \draw(0,-5)node[below]{\tiny{0}};
    \draw(4,-1)node[right]{$\left(\begin{smallmatrix}1\\0\end{smallmatrix}\right)$};
    \draw(-4,-1)node[left]{$\left(\begin{smallmatrix}0&1\end{smallmatrix}\right)$};
    \draw(2,5)node[right]{$\left(\begin{smallmatrix}0&0\\1&1\end{smallmatrix}\right)$};
    \end{tikzpicture}
\\\hline
\begin{tikzpicture}[xscale=.12,yscale=.1,rotate=180]
    \draw[\RED](-95:4.5)to[bend left=60](85:4)
        (-95:4.5)to[bend right=60](85:4);
    \draw[]plot [smooth,tension=1] coordinates {(-110:6) (-95:4.5) (-70:6)};
    \draw[]plot [smooth,tension=1] coordinates {(120:5) (85:4) (60:5)};
    \draw[very thin](-95:4.5)node{$.$} (85:4)node{$.$}(0,0)node{\Large{$.$}};
    \draw[\BLUE]plot [smooth,tension=1] coordinates
        {(-1.1,2.7)(1,0)(-1.7,-2.7)};
    \end{tikzpicture}
&

\begin{tikzpicture}[xscale=.1,yscale=.1]
    \draw(-7,-5)node(a){$0$}(9,-5)node(b){$\k^2$}
        (0,0)node(c){$\k^2$};
    \draw[<-,>=stealth](c)to(a);\draw[<-,>=stealth](a)to(b);\draw[<-,>=stealth](b)to(c);
    \draw[<-,>=stealth,dashed](120:2).. controls +(135:8) and +(60:8) .. (45:2);
    \draw(4,-1)node[right]{$\left(\begin{smallmatrix}1&0\\0&1\end{smallmatrix}\right)$};
    \draw(2,5)node[right]{$\left(\begin{smallmatrix}0&0\\1&1\end{smallmatrix}\right)$};
    \end{tikzpicture}
&
\begin{tikzpicture}[xscale=.1,yscale=.1]
    \draw(-8,-5)node(a){$0$}(8,-5)node(b){$\k^2$}
        (0,0)node(c){$\k^2$};
    \draw[<-,>=stealth](a)to(c);\draw[<-,>=stealth](b)to(a);\draw[<-,>=stealth](c)to(b);
    \draw[<-,>=stealth,dashed](120:2).. controls +(135:8) and +(60:8) .. (45:2);
    \draw(4,-1)node[right]{$\left(\begin{smallmatrix}1&0\\0&1\end{smallmatrix}\right)$};
    \draw(2,5)node[right]{$\left(\begin{smallmatrix}0&0\\1&1\end{smallmatrix}\right)$};
    \end{tikzpicture}

\\\hline
\begin{tikzpicture}[xscale=.12,yscale=.1,rotate=180]
    \draw[\RED](-95:4.5)to[bend left=60](85:4)
        (-95:4.5)to[bend right=60](85:4);
    \draw[\BLUE]plot [smooth,tension=1] coordinates
        {(-1.1,2.7)(1,0)(-95:4.5)};
    \draw[very thin](-95:4.5)node{$.$} (85:4)node{$.$}(0,0)node{\Large{$.$}};
    \draw[]plot [smooth,tension=1] coordinates {(-110:6) (-95:4.5) (-70:6)};
    \draw[]plot [smooth,tension=1] coordinates {(120:5) (85:4) (60:5)};
    \end{tikzpicture}
&

\begin{tikzpicture}[xscale=.1,yscale=.1]
    \draw(-7,-5)node(a){$0$}(7,-5)node(b){$\k$}
        (0,0)node(c){0};
    \draw[<-,>=stealth](c)to(a);\draw[<-,>=stealth](a)to(b);\draw[<-,>=stealth](b)to(c);
    \draw[<-,>=stealth,dashed](120:2).. controls +(135:8) and +(60:8) .. (45:2);
    \end{tikzpicture}
&
\begin{tikzpicture}[xscale=.1,yscale=.1]
    \draw(-8,-5)node(a){$0$}(8,-5)node(b){$\k$}
        (0,0)node(c){$\k^2$};
    \draw[<-,>=stealth](a)to(c);\draw[<-,>=stealth](b)to(a);\draw[<-,>=stealth](c)to(b);
    \draw[<-,>=stealth,dashed](120:2).. controls +(135:8) and +(60:8) .. (45:2);
    \draw(4,-1)node[right]{$\left(\begin{smallmatrix}1\\0\end{smallmatrix}\right)$};
    \draw(2,5)node[right]{$\left(\begin{smallmatrix}0&0\\1&1\end{smallmatrix}\right)$};
    \end{tikzpicture}

\\\hline
\begin{tikzpicture}[xscale=.12,yscale=.1,rotate=180]
    \draw[\RED](-95:4.5)to[bend left=60](85:4)
        (-95:4.5)to[bend right=60](85:4);
    \draw[bend left=1,\BLUE](0,-.1)to(-2.2,-.1);
    \draw[]plot [smooth,tension=1] coordinates {(-110:6) (-95:4.5) (-70:6)};
    \draw[]plot [smooth,tension=1] coordinates {(120:5) (85:4) (60:5)};
    \draw[very thin](-95:4.5)node{$.$} (85:4)node{$.$}(0,0)node{\Large{$.$}}
        (-0.6,-.1)node[\BLUE,above]{\tiny{$\kappa$}};
    \end{tikzpicture}
&

\begin{tikzpicture}[xscale=.1,yscale=.1]
    \draw(-7,-5)node(a){$0$}(7,-5)node(b){$\k$}
        (0,0)node(c){$\k$};
    \draw(2,5)node[right]{$\kappa$};
    \draw[<-,>=stealth](c)to(a);\draw[<-,>=stealth](a)to(b);\draw[<-,>=stealth](b)to(c);
    \draw[<-,>=stealth,dashed](120:2).. controls +(135:8) and +(60:8) .. (45:2);
        \draw(4,-1)node[right]{1};
    \end{tikzpicture}
&
\begin{tikzpicture}[xscale=.1,yscale=.1]
    \draw(-8,-5)node(a){$0$}(8,-5)node(b){$\k$}
        (0,0)node(c){$\k$};
    \draw[<-,>=stealth](a)to(c);\draw[<-,>=stealth](b)to(a);\draw[<-,>=stealth](c)to(b);
    \draw[<-,>=stealth,dashed](120:2).. controls +(135:8) and +(60:8) .. (45:2);
    \draw(2,5)node[right]{$\kappa$};
        \draw(4,-1)node[right]{1};
    \end{tikzpicture}

\\\hline
\begin{tikzpicture}[xscale=.12,yscale=.1,rotate=0]
    \draw[\RED](-95:4.5)to[bend left=60](85:4)
        (-95:4.5)to[bend right=60](85:4)(-1,0)node[\BLUE]{\tiny{$\kappa$}};
    \draw[bend left=1,\BLUE](0,-.1)to(-95:4.5);
    \draw[]plot [smooth,tension=1] coordinates {(-110:6) (-95:4.5) (-70:6)};
    \draw[]plot [smooth,tension=1] coordinates {(120:5) (85:4) (60:5)};
    \draw[very thin](-95:4.5)node{$.$} (85:4)node{$.$}(0,0)node{\Large{$.$}};
    \end{tikzpicture}
&
\begin{tikzpicture}[xscale=.1,yscale=.1]
    \draw(-7,-5)node(a){$0$}(7,-5)node(b){$0$}
        (0,0)node(c){$\k$};
    \draw(2,5)node[right]{$\kappa$};
    \draw[<-,>=stealth](c)to(a);\draw[<-,>=stealth](a)to(b);\draw[<-,>=stealth](b)to(c);
    \draw[<-,>=stealth,dashed](120:2).. controls +(135:8) and +(60:8) .. (45:2);
    \end{tikzpicture}
&
\begin{tikzpicture}[xscale=.1,yscale=.12]
    \draw(-7,4)node(a){ }(7,4)node(b){ }(0,4)node(c){$M'=0$}
        (0,0)node(d){$V_j'=\delta_{ij}\k$}
        (0,-4)node(e){$V_{\varepsilon_i}'=1-\kappa$};
    \end{tikzpicture}
\\\hline
\end{tabular}
\end{table}
Let $(\gamma,\kappa)$ be a tagged curve in $\TC$ and $(M,V)=(M^\T_{(\gamma,\kappa)},V^\T_{(\gamma,\kappa)})$ be the corresponding decorated representation of $(Q^\T,W^\T)$, defined in Construct~\ref{cstr:dec}. For a vertex $i\in Q_0^\T$, construct $\mu_i(M,V)=(M',V')$ as follows, where we use $\hookrightarrow$ to denote the canonical inclusion and $\twoheadrightarrow$ the canonical projection.

If there is no dashed loop at $i$, the subquivers of $Q^\T$ and $\mu_i(Q^\T)$ consisting of all arrows adjacent to $i$ are shown in the second row in Table~\ref{table1}. Construct $\mu_i(M,V)=(M',V')$ as follows.
\begin{itemize}
  \item For any $j\neq i$, set $M'_j=M_j$ and $V'_j=V_j$.
  \item Define $$M'_i=\frac{\ker M_{\gamma_1}\oplus\ker M_{\gamma_2}}{\im \left(\begin{smallmatrix}M_{\beta_1}\\M_{\beta_2}\end{smallmatrix}\right)}\oplus\im M_{\gamma_1}\oplus\im M_{\gamma_2}\oplus\frac{\ker \left(\begin{smallmatrix}M_{\alpha_1}&M_{\alpha_2}\end{smallmatrix}\right)}{\im M_{\gamma_1}\oplus\im M_{\gamma_2}}\oplus V_i$$ and $$V'_i=\frac{\ker M_{\beta_1}\cap\ker M_{\beta_2}}{\ker M_{\beta_1}\cap\ker M_{\beta_2}\cap\left(\im M_{\alpha_1}+\im M_{\alpha_2}\right)}.$$
  \item For any arrow $a\in Q^\T_1$ not incident with $i$, set $M'_\alpha=M_\alpha$.
  \item For any arrow $\varepsilon\in Q^\T_2$, set $M'_\varepsilon=M_\varepsilon$ and $V'_\varepsilon=V_\varepsilon$.
  \item Define $M'_{[\beta_2\alpha_1]}=M_{\beta_2}M_{\alpha_1}$ and $M'_{[\beta_1\alpha_2]}=M_{\beta_1}M_{\alpha_2}$.
  \item The map $M'_{\alpha^\ast_x}:M'_i\to M'_{s(\alpha_x)}$, is given by the canonical inclusion $\im M_{\gamma_x}\hookrightarrow M_{s(\alpha_x)}$, and the composition
      \[\frac{\ker \left(\begin{smallmatrix}M_{\alpha_1}&M_{\alpha_2}\end{smallmatrix}\right)}{\im M_{\gamma_1}\oplus\im M_{\gamma_2}}\xrightarrow{a} \ker \left(\begin{smallmatrix}M_{\alpha_1}&M_{\alpha_2}\end{smallmatrix}\right)\hookrightarrow M_{s(\alpha_1)}\oplus M_{s(\alpha_2)} \twoheadrightarrow M_{s(\alpha_x)},\]
      where $a$ is a right inverse of the canonical projection $\ker \left(\begin{smallmatrix}M_{\alpha_1}&M_{\alpha_2}\end{smallmatrix}\right)\twoheadrightarrow\frac{\ker \left(\begin{smallmatrix}M_{\alpha_1}&M_{\alpha_2}\end{smallmatrix}\right)}{\im M_{\gamma_1}\oplus\im M_{\gamma_2}}$.
  \item The map $M'_{\beta^\ast_x}:M'_{t(\beta_x)}\to M'_i$, is given by the map $M_{t(\beta_x)}\xrightarrow{M_{\gamma_x}}\im M_{\gamma_x}$ and the composition
      \[M_{t(\beta_x)}\hookrightarrow M_{t(\beta_1)}\oplus M_{t(\beta_2)}\xrightarrow{b} \ker M_{\gamma_1}\oplus \ker M_{\gamma_2}\twoheadrightarrow\frac{\ker M_{\gamma_1}\oplus\ker M_{\gamma_2}}{\im \left(\begin{smallmatrix}M_{\beta_1}\\M_{\beta_2}\end{smallmatrix}\right)},\]
      where $b$ is a left inverse of the inclusion $\ker M_{\gamma_1}\oplus \ker M_{\gamma_2}\hookrightarrow M_{t(\beta_1)}\oplus M_{t(\beta_2)}$.
\end{itemize}
If there is a dashed loop $\varepsilon_i$ at $i$, the subquivers of $Q^\T$ and $\mu_i(Q^\T)$ consisting of all arrows adjacent to $i$ are shown in the second row in Table~\ref{table2}. Construct $\mu_i(M,V)=(M',V')$ as follows.
\begin{itemize}
  \item For any $j\neq i$, set $M'_j=M_j$ and $V'_j=V_j$.
  \item Define
  $$\begin{array}{c}
    M'_i=\frac{\ker M_{\gamma}}{\im M_{\beta}M_{\varepsilon_i}}\oplus\frac{\ker M_{\gamma}}{\im M_{\beta}(1-M_{\varepsilon_i})}\oplus\im (M_{\gamma})^{\oplus2}
    \oplus\frac{\ker M_{\varepsilon_i}M_{\alpha}\oplus \ker (1-M_{\varepsilon_i})M_{\alpha}}
        {\im M_{\gamma}}
      \oplus V_i\quad
    \end{array}
  $$
  and
  \[\begin{array}{c}V'_i=\frac{\ker M_{\beta}M_{\varepsilon_i}}{\ker M_{\beta}M_{\varepsilon_i}\cap\im M_{\varepsilon_i}M_\alpha}\oplus\frac{\ker M_{\beta}(1-M_{\varepsilon_i})}{\ker M_{\beta}(1-M_{\varepsilon_i})\cap\im (1-M_{\varepsilon_i})M_\alpha}.\end{array}\]
  \item The map $V'_{\varepsilon_i}$ is given by the identity on the first summand.
  \item For any arrow $a\in Q^\T_1$ not incident with $i$, set $M'_\alpha=M_\alpha$.
  \item For any arrow $\varepsilon\in Q^\T_2$ not incident with $i$, set $M'_\varepsilon=M_\varepsilon$ and $V'_\varepsilon=V_\varepsilon$.
  \item Define $M'_{[\beta\alpha]}=M_{\beta}M_{\varepsilon_i}M_{\alpha}$.
  \item The map $M'_{\alpha^\ast}:M'_i\to M'_{s(\alpha)}$ is given by the map $(\im M_{\gamma})^{\oplus2}\xrightarrow{\left(\begin{smallmatrix}\iota&\iota\end{smallmatrix}\right)} M_{s(\alpha_x)}$ and the composition
      $$\begin{array}{c}
        \frac{\ker M_{\varepsilon_i}M_{\alpha}\oplus \ker (1-M_{\varepsilon_i})M_{\alpha}}
        {\im M_{\gamma}}
        \xrightarrow{a} \ker M_{\varepsilon_i}M_{\alpha}\oplus\ker (1-M_{\varepsilon_i})M_{\alpha}\hookrightarrow M_{s(\alpha)}\oplus  M_{s(\alpha)},
      \end{array}$$
      where $\iota$ is the inclusion and $a$ is a right inverse of the projections
      $\ker M_{\varepsilon_i}M_{\alpha}\oplus\ker (1-M_{\varepsilon_i})M_{\alpha}\twoheadrightarrow \frac{\ker M_{\varepsilon_i}M_{\alpha}\oplus \ker (1-M_{\varepsilon_i})M_{\alpha}}
        {\im M_{\gamma}}$.
  \item The map $M'_{\beta^\ast}:M'_{t(\beta)}\to M'_i$ is given by the map $M_{t(\beta)}\xrightarrow{\left(\begin{smallmatrix}M_{\gamma}&M_{\gamma}\end{smallmatrix}\right)^t}(\im M_{\gamma})^{\oplus2}$ and the composition $$M_{(t(\beta))}\xrightarrow{b}\ker M_{\gamma}\xrightarrow{\left(\begin{smallmatrix}\pi,\pi'\end{smallmatrix}\right)}\frac{\ker M_{\gamma}}{\im M_{\beta}M_{\varepsilon_i}}\oplus\frac{\ker M_{\gamma}}{\im M_{\beta}(1-M_{\varepsilon_i})},$$ where $b$ is a left inverse of the inclusion $\ker M_{\gamma}\hookrightarrow M_{(t(\beta))}$ and $\pi, \pi'$ are the projections.
  \item The map $M'_{\varepsilon_i}$ is given by the identity on $\frac{\ker M_{\gamma}}{\im M_{\beta}M_{\varepsilon_i}}$, the identity on $\frac{\ker M_{\varepsilon_i}M_{\alpha}}{\im M_{\gamma}}$, the map $\left(\begin{smallmatrix}1&0\\0&0\end{smallmatrix}\right)\colon\im (M_{\gamma})^{\oplus2}\to\im (M_{\gamma})^{\oplus2}$ and $V_{\varepsilon_i}:V_i\to V_i$.
\end{itemize}
By \cite[Corollary~10.12]{DWZ}, $\mu_i(M,V)$ is a decorated representation of $\mu_i(Q^\T,W^\T)$ in the both cases.

\begin{remark}
The first mutation formula above for $(M,V)$ is an explicit version of DWZ's mutation
of decorated representations \cite{DWZ}.
The second mutation formula above is the composition of two DWZ's mutations
of decorated representations.
\end{remark}

\section*{acknowledgements}
	The authors would like to thank the algebra group of NTNU, especially Aslak Bakke Buan and Idun Reiten for their enthusiastic help.
	Further thanks to Daniel Labardini-Fragoso, Bangming Deng, Jie Xiao and Bin Zhu for their helpful discussions and comments. We would like to thank the anonymous referees for
	many comments and suggestions which improved the presentation of the paper.


\begin{thebibliography}{99}
\newcommand{\arxiv}[1]{\href{http://arxiv.org/abs/#1}{arXiv:#1}}
%
\bibitem{ACCERV}
M.~Alim, S.~Cecotti, C.~C\'{o}rdova, S.~Espahbodi, A.~Rastogi and C.~Vafa,
  `BPS quivers and spectra of complete $\mathcal{N}=2$ quantum field theories',
  \emph{Comm. Math. Phys. }323 (2013), no. 3, 1185--1227.
%
\bibitem{A}
C.~Amiot,
  `Cluster categories for algebras of global dimension 2 and quivers with potential',
  \emph{Ann. Inst. Fourier }59 (2009) 2525--2590.
%
\bibitem{ABCP}
I.~Assem, T.~Br\"{u}stle, G.~Charbonneau-Jodoin and PG.~Plamondon,
  `Gentle algebras arising from surface triangulations'
  \emph{Algebra Number Theory }4 (2012) 201--229.
%
\bibitem{B}
V.~M.~Bondarenko,
  `Representations of bundles of semichained sets and their applications',
  \emph{St. Petersburg Math. J. }3 (1992) 973--996.
%
\bibitem{BQ}
T.~Br\"{u}stle and Y.~Qiu,
  `Tagged mapping class groups: Auslander-Reiten translation',
  \emph{Math. Z. }279 (2015) 1103--1120.
%
\bibitem{BZ}
T.~Br\"{u}stle and J.~Zhang,
  `On the cluster category of a marked surface without punctures',
  \emph{Algebra Number Theory }5 (2011) 529--566.
%
\bibitem{BMRRT}
A.~B.~Buan, R.~Marsh, M.~Reineke, I.~Reiten and G.~Todorov,
  `Tilting theory and cluster combinatorics',
  \emph{Adv. Math. }204 (2006) 572--618.
%
\bibitem{CCS}
P.~Caldero, F. Chapoton and R. Schiffler,
  `Quivers with relations arising from clusters ($A_n$ case)',
  \emph{Trans. Am. Math. Soc. }358 (2006) 1347--1364.
%
\bibitem{CS1}
I.~Canakci and R.~Schiffler,
  `Snake graph calculus and cluster algebras from surfaces',
  \emph{J. Algebra }382 (2013), 240--281.
%
\bibitem{CS2}
I.~Canakci and R.~Schiffler,
  `Snake graph calculus and cluster algebras from surfaces II: self-crossing snake graphs',
  \emph{Math. Z. }281 (2015), 55--102.
%
\bibitem{CS3}
I.~Canakci and R.~Schiffler,
  `Snake graph calculus and cluster algebras from surfaces III: Band graphs and snake rings',
  \arxiv{1506.01742}.
%
\bibitem{CS}
I.~Canakci and S.~Schroll,
  `Extensions in Jacobian algebras and cluster categories of marked surfaces',
  \arxiv{1408.2074}.
%
\bibitem{CB}
W.~Crawley-Boevey,
  `Functorial filtrations II: Clans and the Gelfand problem',
  \emph{J. London Math. Soc. }40 (1989) 9--30.
%
\bibitem{De}
B.~Deng,
  `On a problem of Nazarova and Roiter',
  \emph{Comment. Math. Helv. }75 (2000) 368--409.
%
\bibitem{DWZ}
H.~Derksen, J.~Weyman and A.~Zelevinsky,
  `Quivers with potentials and their representations I: Mutations',
  \emph{Selecta Mathematica} 14 (2008) 59--119.

\bibitem{FST2}
A~Felikson, M~Shapiro and P.~Tumarkin,
  `Skew-symmetric cluster algebras of finite mutation type'
  \emph{J. Eur. Math. Soc.} 14 (2012) 1135--1180.

\bibitem{FG1}
V.~V.~Fock and A.~B.~Goncharov,
  `Moduli spaces of local systems and higher Teichm\"{u}ller theory',
  \emph{Publ. Math. Inst. Hautes \'{E}tudes Sci. No. }103 (2006), 1--211.

\bibitem{FG2}
V.~V.~Fock and A.~B.~Goncharov,
  `Cluster ensembles, quantization and the dilogarithm',
  \emph{Ann. Sci. \'{E}c. Norm. Sup\'{e}r. }(4) 42 (2009), no. 6, 865--930.
%
\bibitem{FST}
S.~Fomin, M.~Shapiro and D.~Thurston,
  `Cluster algebras and triangulated surfaces, part I: Cluster complexes',
  \emph{Acta Mathematica }201 (2008) 83--146.
%
\bibitem{FZ}
S.~Fomin and A.~Zelevinsky,
  'Cluster algebras I: Foundations',
  \emph{J. Amer. Math. Soc. }15 (2002) 497--529.
%
\bibitem{G}
C.~Gei{\ss},
  `Maps between representations of clans',
  \emph{J. of Algebra }218 (1999) 131--164.
%
\bibitem{GP}
C.~Gei{\ss} and J.~A.~de la Pe\~{n}a,
  'Auslander-Reiten components for clans',
  \emph{Boll. Soc. Mat. Mexicana }5 (1999) 307--326.
%
\bibitem{GLS}
C.~Gei{\ss}, D.~Labardini-Fragoso and J.~Schr\"{o}er,
  `The representation type of Jacobian algebras',
  \emph{Adv. Math. }290 (2016), 364--452.
%
\bibitem{ILF}
G.~Irelli and D.~Labardini-Fragoso,
  `Quivers with potentials associated to triangulated surfaces, part III: Tagged triangulations and cluster monomials',
  \emph{Compos. Math. }148 (2012) 1833--1866.
%
\bibitem{IY}
O.~Iyama and Y.~Yoshino,
  `Mutations in triangulated categories and rigid Cohen-Macaulay modules',
  \emph{Invent. Math. }172 (2008) 117--168.
%
\bibitem{K09}
B.~Keller,
  `Deformed Calabi-Yau completions',
  \emph{Journal f\"{u}r die reine und angewandte Mathematik }2013 (2013) 125--180.
%
\bibitem{K10}
B.~Keller,
  `Cluster algebras and derived categories', Derived categories in algebraic geometry, 123--183,
  \emph{EMS Ser. Congr. Rep., }Eur. Math. Soc., Z¨¹rich, 2012.
%
\bibitem{KR}
B.~Keller and I.~Reiten,
  `Cluster tilted algebras are Gorenstein and stably Calabi-Yau',
  \emph{Adv. Math. }211 (2007) 123--151.
%
\bibitem{KY}
B.~Keller and D.~Yang,
  `Derived equivalences from mutations of quivers with potential',
  \emph{Adv. Math. }226 (2011) 2118--2168.
%
\bibitem{KZ}
S.~Koenig and B.~Zhu,
  `From triangulated categories to abelian categories: cluster tilting in a general framework',
  \emph{Math. Z. }258 (2008) 143--160.
%
\bibitem{LF}
D.~Labardini-Fragoso,
  `Quivers with potentials associated to triangulated surfaces',
  \emph{Proc. London Math. Soc. }98 (2009) 797--839.
%
\bibitem{LF2}
D.~Labardini-Fragoso,
  `Quivers with potentials associated to triangulated surfaces, Part II: Arc representations',
  \arxiv{0909.4100}.
%
\bibitem{LF3}
D.~Labardini-Fragoso,
  `On triangulations, quivers with potentials and mutations', Mexican mathematicians abroad: recent contributions, 103--127,
  \emph{Contemp. Math., }657, Amer. Math. Soc., Providence, RI, 2016.
%
\bibitem{MP}
R.~J.~Marsh and Y.~Palu,
  `Coloured quivers for rigid objects and partial triangulations: the unpunctured case',
  \emph{Proc. London Math. Soc. }108 (2014) 411--440.
%
\bibitem{M}
M.~Mills,
  `Maximal green sequences for quivers of finite mutation type',
  \arxiv{1606.03799}.
%
\bibitem{MSW2}
G.~Musiker, R.~Schiffler and L.~Williams,
  `Positivity for cluster algebras from surfaces',
  \emph{Adv. Math. }227 (2011), 2241--2308.
%
\bibitem{MSW}
G.~Musiker, R.~Schiffler and L.~Williams,
  `Bases for cluster algebras from surfaces',
  \emph{Compos. Math. }149 (2013), 217--263.
%
\bibitem{MW}
G.~Musiker and L.~Williams,
  `Matrix formulae and skein relations for cluster algebras from surfaces',
  \emph{Int. Math. Res. Not. }2013, 2891--2944.
%
\bibitem{Pa}
Y.~Palu,
  `Cluster characters for 2-Calabi-Yau triangulated categories',
  \emph{Annales de l'institut Fourier }58 (2008) 2221--2248.
%
\bibitem{P}
P-G.~Plamondon,
  `Cluster algebras via cluster categories with infinite dimensional morphism spaces',
  \emph{Compos. Math. }147 (2011) 1921--1954.
%
\bibitem{QQ}
Y.~Qiu,
  `Decorated marked surfaces: Spherical twists versus braid twists',
  \emph{Math. Ann. }365 (2016) 595--633.
%
\bibitem{QZ2}
Y.~Qiu and Y.~Zhou,
  `Decorated marked surfaces II: Intersection numbers and dimensions of Homs',
  \arxiv{1411.4003}.
%
\bibitem{Sch}
R.~Schiffler,
  `A geometric model for cluster categories of type $D_n$',
  \emph{J. of Algebraic Combinatorics }27 (2008) 1--21.
\bibitem{ZhZZ}
J.~Zhang, Y.~Zhou and B.~Zhu,
 `Cotorsion pairs in the cluster category of a marked surface',
 \emph{J. of Algebra }391 (2013) 209--226.
\end{thebibliography}
\end{document}